\title{Low-regularity Seiberg-Witten moduli spaces on manifolds with boundary}
\author{Piotr Suwara \thanks{Institute of Mathematics, 
		Polish Academy of Sciences,
		ul. Śniadeckich 8, 
		00-656 Warszawa,
	email: \href{mailto:psuwara@impan.pl}{psuwara@impan.pl}}}
\date{\today}
\begin{document}

\maketitle

\abstract{
	For a~compact \spinc{} manifold \(X\) with boundary
	\(b_1(\partial X)=0\),
	we consider 
	moduli spaces of solutions to the Seiberg\hyp{}Witten equations
	in a~generalized double Coulomb slice 
	in \(W^{1,2}\) Sobolev regularity.
	We prove they
	are Hilbert manifolds,
	prove denseness and ''semi\hyp{}infinite\hyp{}dimensionality''
	properties of the restriction to \(\partial X\),
	and establish a~gluing theorem.

	To achieve these, we prove
	a~general regularity theorem and
	a~strong unique continuation principle for Dirac operators,
	and smoothness of a~restriction map to configurations
	of higher regularity on the interior,
	all of which are of independent interest.
}

\tableofcontents

\section{Introduction}
This article is motivated by an effort to provide
a~new framework for constructing so\hyp{}called Floer theories,
which are used to construct invariants of low\hyp{}dimensional manifolds,
knots, as well as Lagrangians in symplectic manifolds.
Originally, Floer implanted ideas from finite\hyp{}dimensional Morse theory
(cf. \cites{Sch1993,Wit1982})
to the study of functions 
on infinite\hyp{}dimensional spaces
(often called \textit{functionals} on \textit{configuration spaces}), 
leading to the resolution of the Arnold conjecture in symplectic topology
\cites{Flo1987,Flo1988Lag}.
Inspired by Donaldson's proof of the diagonalization theorem
for \(4\)\hyp{}manifolds \cite{Don1983}, 
Floer used these ideas
to construct homological invariants of \(3\)\hyp{}manifolds,
aiming to establish gluing formulas for Donaldson's invariants.
An~analogous construction for knots in \(3\)\hyp{}manifolds
has been carried out by Kronheimer and Mrowka
and plays the key role in the proof that Khovanov homology
detects the unknot \cite{KM2011a}.
An analogue for the Seiberg\hyp{}Witten equations (in place of the
anti\hyp{}self\hyp{}duality (ASD) equations of Donaldson's theory)
is the central ingredient 
of Manolescu's disproof of the triangulation conjecture
in dimensions \(\geq 5\) \cite{Man2013}.

While the idea of using Morse theory in infinite\hyp{}dimensional settings dates back
to Atiyah and Bott \cite{AB1983},
Floer's novelty was dealing with 
functionals 
having critical points of infinite index.
They key observation was that the unstable manifolds of critical points
were of finite index with respect to a~certain subbundle of the tangent space,
allowing one to define finite relative indices of critical points
and eventually leading to a~computation of 
''middle\hyp{}dimensional'' homology groups of the space.
As noted by Atiyah \cite{A1988} (cf. \cite{CJS1995}),
Floer theory was, from the very beginning,
understood as describing the behavior of so\hyp{}called 
\emph{semi\hyp{}infinite\hyp{}dimensional cycles}.

As in Morse homology (cf. \cite{Sch1993}),
one needs to overcome analytical difficulties
to even define Floer theories and prove they are well\hyp{}defined
and independent of choices -- one needs to establish regularity and compactness 
of moduli spaces of trajectories between critical points.
Moreover, Morse theory is generally not well\hyp{}suited 
for equivariant constructions
since one in general cannot guarantee regularity of the moduli spaces
without breaking the symmetry coming from a~group action.
To address these problems
and to allow general constructions of equivariant 
(cf. \cites{DS2019pre,KMbook,FLin2018,SMMil2019})
and generalized Floer homologies
(cf. \cites{Man2014,JLin2015,AB2021pre}),
Lipyanskiy \cite{Lip2008} introduced a~framework for using such 
semi\hyp{}infinite\hyp{}dimensional cycles 
as a~tool for defining Floer theories
and the author has further developed these methods in \cite{Suw2020}.
This article contains results required to rigorously 
define the relative invariants of \(4\)\hyp{}manifolds with boundary
and maps induced by cobordisms
in this construction, using the Seiberg\hyp{}Witten equations.
Moreover, the key result is a~gluing theorem which 
is fundamental for establishing functoriality of the
cobordism maps.

\paragraph{Results.}
Consider a~\(4\)\hyp{}manifold \(X\) with boundary
a~nonempty collection of rational homology spheres
(i.e., \(b_1(\partial X) = 0\))
and a~\spinc{} structure \(\hat{\mathfrak{s}}\) on \(X\).
We study the moduli space of solutions to the Seiberg\hyp{}Witten equations
for pairs \((A,\Phi)\) of a~\spinc{} connection \(A\) on \(X\)
and a~section \(\Phi\) of the spinor bundle \(S^+\) over \(X\)
associated to the \spinc{} structure \(\hat{\mathfrak{s}}\):
\begin{equation*}
	\left\{
	\begin{aligned}
		\frac 1 2 F^+_{A^t} - \rho^{-1}((\Phi\Phi^\ast)_0) 
			&= 0, \\
		\Dirac_{A}^+ \Phi  &= 0.
	\end{aligned}
	\right.
\end{equation*}
(\autoref{def:Seiberg-Witten-equations}).
The split Coulomb slice with respect to a~reference connection \(A_0\)
is given by the equations
\begin{equation*}
	\left\{
		\begin{aligned}
			d^\ast (A-A_0) 
			&= 0, \\
			d^\ast (\iota_{\partial X}^\ast (A-A_0)) 
			&= 0,
		\end{aligned}
	\right.
\end{equation*}
(where \(\iota_{\partial X}: \partial X \hookrightarrow X\) denotes the inclusion)
together with a~condition restricting \(A-A_0\)
to a~subset of codimension \(b_0(\partial X) - 1\)
(\autoref{def:split-Coulomb-slice}).
This additional condition 
depends on a~choice of a~gauge splitting \(s\)
(\autoref{def:gauge-splitting}).
The moduli space \(\SWModuli{s}{X}\)
(\autoref{def:moduli-spaces-on-4-manifolds-with-boundary})
is the quotient of the space of \(L^2_1\)\hyp{}solutions
to the Seiberg\hyp{}Witten equations
in this split Coulomb slice
by the (discrete) action of the split gauge group
(\autoref{def:gauge-group-split});
this gauge group preserves the split Coulomb slice
as well as the set of solutions to the Seiberg\hyp{}Witten equations.
There is also a~residual action of \(S^1\) on \(\SWModuli{s}{X}\)
given by multiplication of the spinor component, \(\Phi \mapsto z \Phi\),
by complex numbers in the unit circle \(z \in S^1 \subset \mathbb{C}\).

Firstly, we choose a~gauge twisting \(\tau\)
(\autoref{def:gauge-twisting})
and define the (\(S^1\)\hyp{}equivariant) twisted restriction map
\(R_\tau : \SWModuli{s}{X} \to \CoulThree{\partial X}\),
taking values in the configurations on \(\partial X\)
in the Coulomb slice
(\autoref{def:Coulomb-slice}).
The gauge splittings and gauge twistings we introduce
generalize the double Coulomb slice used in
\cites{Lip2008,Kha2015} and twistings utilized in \cite{KLS2018}.
We prove
regularity, denseness and 
``semi\hyp{}infinite\hyp{}dimensionality''
of the Seiberg\hyp{}Witten moduli spaces:
\begin{restatable*}%
	{semiinfdimthm}{moduli}
	\label{thm:semi-infinite-dimensionality-of-moduli-spaces}
	The moduli spaces \(\SWModuli{s}{X}\) are Hilbert manifolds.
	The differential of the twisted restriction map 
	\(R_\tau : \SWModuli{s}{X} \to \CoulThree{\partial X}\)
	decomposes into 
	\(\proj^- D R_\tau:
	T \SWModuli{s}{X} \to H^-(\partial X,\mathfrak{s})\) 
	which is Fredholm
	and
	\(\proj^+ D R_\tau:
	T \SWModuli{s}{X} \to H^+(\partial X,\mathfrak{s})\) 
	which is compact.

	Moreover,
	if \(b_0(\partial X)>1\),
	then for any connected component
	\(Y_0 \subset \partial X\)
	the restriction \(R_{\tau,Y_0}\) to \(Y_0\) has dense
	differential.
\end{restatable*}
The maps \(\Pi^\pm\) come from a~decomposition
\(\CoulThree{\partial X} = H^+(\partial X, \mathfrak{s})
\oplus H^-(\partial X, \mathfrak{s})\) 
according to the eigenvalues
of the operator \((\star d, \Dirac_{B_0})\),
also called a~\emph{polarization}
(\autoref{def:polarization-on-Coulomb-slice}).

The proof 
utilizes the Atiyah\hyp{}Patodi\hyp{}Singer boundary value problem
for an~extended linearized Seiberg\hyp{}Witten operator
(\autoref{def:extended-DSW}).
Then we prove its properties 
transfer to the restriction 
to the Coulomb slice.

Our low regularity setting requires us to prove
a~regularity theorem 
(\namedref{thm:low-regularity})
for an~operator of the form \(D = D_0 + K : L^2_1 \to L^2\), 
where \(D_0\) is a~Dirac operator and \(K\) is any compact operator,
making it a~result of independent interest.
We also prove a~strong version of the unique
continuation principle for \(D\) (\namedref{thm:unique-cont-dirac-Lr}).

Secondly, we prove a~gluing theorem for a~composite cobordism.
Assume \(X\) splits as \(X = X_1 \cup_Y X_2\)
along a~rational homology sphere \(Y\).
If the auxiliary data of gauge splittings 
and gauge twistings 
are compatible in
a~suitable sense
(see
\autoref{prop:twistings-are-integral-splittings}
and 
\autoref{prop:integral-splittings-on-a-composite-cobordism}),
then the moduli space \(\SWModuli{s}{X}\)
can be recovered from the fiber product of
\(\SWModuli{s_1}{X_1}\) and \(\SWModuli{s_2}{X_2}\)
over the configuration space of \(Y\),
in a~way compatible with the twisted restriction maps:
\begin{restatable*}%
	{gluingthm}{gluing}
	\label{thm:composing-cobordisms}
	Assume \(s_{\mathbb{Z}}\) and \( (s_{1,\mathbb{Z}}, s_{2,\mathbb{Z}})\)
	are compatible.
	Then there is an~\(S^1\)\hyp{}equivariant diffeomorphism 
	\(F : \SWModuli{s}{X}
	\to \SWModuli[1]{s_1}{X_1} \times_{Y} \SWModuli[2]{s_2}{X_2}\)
	such that
	\(R_{\tau}\)
	is \(S^1\)\hyp{}equivariantly homotopic to
	\( \left(R_{\tau_1}
		\times_{Y}
	R_{\tau_2} \right) \circ F\).
\end{restatable*}

The proof uses the following fact of independent interest.
We show that the restriction map of solutions in \(\SWModuli{s}{X}\)
to a~submanifold in the interior of \(X\) is smooth as a~map
into a~configuration space of \textit{higher} regularity
(\autoref{thm:restriction-is-smooth-for-solutions}).
While it is easy to prove that its image lies
in the space of smooth configurations
(\autoref{lem:interior-smoothness-of-solutions}),
the proof of \textit{smoothness} of this map is non\hyp{}trivial.

\paragraph{Applications.}
The \namedref{thm:semi-infinite-dimensionality-of-moduli-spaces}
together with compactness of moduli spaces proved in \cite{KMbook} 
(with minor modifications to account for the double Coulomb slice
instead of the Coulomb\hyp{}Neumann slice used in \cite{KMbook})
show that the maps \(R_\tau : \SWModuli{s}{X} \to \CoulThree{Y}\)
are \textit{semi\hyp{}infinite\hyp{}dimensional cycles} in \(\CoulThree{\partial X}\)
as defined in \cites{Lip2008,Suw2020},
establishing the existence of relative Seiberg\hyp{}Witten invariants of \(X\).
If the boundary \(\partial X\) is connected, these do not depend on the choice
of an~integral splitting (\autoref{lem:uniqueness-of-integral-splittings}).
This also implies that cobordisms \(\partial W = - Y_1 \cup Y_2\)
induce \textit{correspondences} 
(defined in \cites{Lip2008,Suw2020})
between the configuration spaces
over \(Y_1\) and \(Y_2\).

Our methods apply to perturbed equations
as well, which we did not include for the sake of simplicity.
Varying the metrics and perturbations gives cobordisms
between the relevant moduli spaces on \(X\),
and changing the reference connections on \(X\)
gives isomorphic moduli spaces with homotopic restriction maps.
This means that the relative invariant of \(X\),
up to a~suitable cobordism relation,
does not depend on the choices of perturbations and metric.

Crucially, the \namedref{thm:composing-cobordisms} says that
the correspondence induced by a~composite cobordism
is (homotopic to) the composition of the respective correspondences,
proving the theory comes with a~TQFT\hyp{}like structure.
We hope to prove that this theory
recovers a~non\hyp{}equivariant version of monopole Floer homology
\(\widetilde{HM}_\ast\)
and describe a~construction supposed to recover
all of the flavors of monopole Floer homology (for rational homology spheres) in an~upcoming work.

Applications of Seiberg\hyp{}Witten\hyp{}Floer theory
suggest including the case \(b_1(\partial X) > 0\)
or allowing \(X\) to have cylindrical or conical ends
(like in \cite{KM1997})
with certain asymptotic conditions on the configurations.
The methods presented here should suffice to prove 
regularity and semi\hyp{}infinite\hyp{}dimensionality of the corresponding
moduli spaces.
However, to establish the semi\hyp{}infinite\hyp{}dimensional theory
in full one needs to 
carefully select a~slice for the gauge action
and deal with compactness issues
which we hope to address in further work.

Finally, the methods presented here should be applicable 
(with appropriate adjustments) for defining
semi\hyp{}infinite\hyp{}dimensional Floer theories
in other contexts, e.g., in the Yang\hyp{}Mills\hyp{}Floer theory.

\paragraph{Organization.}
In \autoref{sec:analytical-preparation} we prove
the regularity theorem for a~Dirac operator with compact perturbation,
the \namedref{thm:low-regularity},
and a~strong unique continuation principle for a~Dirac operator with a~low\hyp{}regularity
potential, the \namedref{thm:unique-cont-dirac-Lr}.

In \autoref{sec:Seiberg-Witten-moduli-spaces-in-split-Coulomb-slice}
we introduce the basic notions of Seiberg\hyp{}Witten theory on \(3\)\hyp{} and \(4\)\hyp{}manifolds.
For a~chosen gauge splitting
the split Coulomb slice is defined,
and for a~gauge twisting a~twisted restriction map to the boundary
is introduced.
The choices of splittings and twistings are shown to be equivalent to a~choice
of an~integral splitting, one which does not require any twisting.

\Cref{sec:properties-of-moduli-spaces} provides proofs of some of the key
properties of the moduli spaces: regularity,
semi\hyp{}infinite\hyp{}dimensionality and denseness of the restriction map
(\namedref{thm:semi-infinite-dimensionality-of-moduli-spaces}).
Finally, \autoref{sec:gluing-along-a-boundary-component} 
contains the proof of the \namedref{thm:composing-cobordisms},
showing that moduli on a~composite cobordism
are a~fiber product of the moduli on its components.

\paragraph{Notations.}
In this article we use the notation \(L^p_k\) for Sobolev spaces of regularity \(k\),
in accordance with the literature in gauge theory 
and Floer theory in low dimensional topology.
These are often denoted by \(W^{k,p}\) or \(H^{k,p}\) or, 
when \(p=2\), simply by \(H^k\).

All manifolds are assumed to be smooth, 
submanifolds to be smoothly embedded,
and manifolds with boundary to have smooth boundary.

\paragraph{Acknowledgments.}
I would like to express gratitude to my graduate advisor Tom Mrowka
for his encouragement, support and sharing many insights
on geometry, topology and analysis.
I would like to thank Maciej Borodzik and Michał Miśkiewicz 
for helpful comments on a~draft of this paper,
and an~anonymous referee for pointing out an~innacuracy
in the statement of \namedref{thm:low-regularity}.

Most results of this paper have been part of the author's Ph.D. thesis
\cite{Suw2020}, although the proofs have been revised
and some results have been generalized.
In particular, \cite{Suw2020} does not consider \(4\)\hyp{}manifolds 
with more than two boundary components, 
or the general split Coulomb slice on them.

\section{Analytical preparation}
\label{sec:analytical-preparation}
We prove two results which are fundamental for the surjectivity proofs in
\autoref{sec:properties-of-moduli-spaces}.

The first one, the \namedref{thm:low-regularity}, is a~regularity theorem for
operators of the form \(D = D_0 + K : L^2_1 \to L^2\),
where \(D_0\) is a~first\hyp{}order elliptic operator
and \(K\) is a~compact operator.
In our applications \(K\) is a~multiplication 
by an~\(L^2_1\)\hyp{}configuration on
a~\(4\)\hyp{}manifold and thus it does not factor as a~map \(L^2_1 \to L^2_1\).
If it did, elliptic regularity would immediately imply
the regularity result.
The novelty here is the general form of the perturbation \(K\)
which is only assumed to be compact as a~map \(L^2_1 \to L^2\).

The second result, the \namedref{thm:unique-cont-dirac-Lr},
is a~strong unique continuation principle in a~similar low regularity setting.
This can be understood as a~strengthening of the unique continuation results
of \cite{KMbook}*{Section 7}.

\subsection{A~regularity theorem for Dirac operators}

Let \(X\) be a~smooth Riemannian manifold with 
asymptotically cylindrical ends (without boundary).
Let \(D_0\) be an~elliptic operator
\(D_0: C^\infty(X;E) \to C^\infty(X;F)\)
of order \(1\)
which is asymptotically cylindrical on the ends of \(X\).
Let \(K: L^2_1(X;E) \to L^2(X;F)\) be a~compact operator
which has a~compact formal adjoint
\(K^\ast: L^2_1(X;E) \to L^2(X;F) \)
and which extends to a~continuous operator
\(K : L^2(X; E) \to L^2_{-l}(X;F)\)
for some \(l\).
Consider the operator
\[D = D_0 + K : L^2_1(X;E) \to L^2(X;F). \]
The aim of this subsection is to prove the following regularity theorem:
\begin{regularitythm}
	\label{thm:low-regularity}
	Assume that \(v \in L^2(X;E) \) satisfies \(D v = h \)
	for some \(h \in L^2(X;F) \).
	Then \( v \in L^2_{1}(X;E) \).

	Moreover, if \(v, h \in L^2_{loc}(X;E)\) instead, 
	then \(v \in L^2_{1,loc}(X;E)\).
\end{regularitythm}

In the course of the proof
we will make use of the following simple lemma 
(cf. Lipyanskiy \cite{Lip2008}*{Lemma 44}):
\begin{lemma}
	\label{lem:op-weak-comp-convergence}
	Suppose the sequence of Hilbert space operators \( \{A_i : V \to W\} \) 
	is uniformly bounded and weakly convergent to \(A\) in the sense that
	for any \(v \in V\) we have \( A_i(v) \to A(v) \).

	If \( K : W \to U \) is compact, then 
	\( A_i \circ K \to A \circ K \).
\end{lemma}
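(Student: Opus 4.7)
The plan is to reduce the claim to the classical principle that a pointwise-convergent, uniformly bounded family of operators converges uniformly on any precompact set, and then apply this principle to the precompact image of the unit ball under the compact operator \(K\).

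More precisely, I would denote \(B_i := A_i - A\), so the hypothesis gives \(B_i v \to 0\) for every \(v\). Uniform boundedness of the \(A_i\) (together with the fact that a pointwise limit of a uniformly bounded family is itself bounded) yields a constant \(M\) with \(\|B_i\|_{\mathrm{op}} \le M\) for all \(i\). Writing \(\|B_i \circ K\|_{\mathrm{op}} = \sup_{\|u\|\le 1} \|B_i (K u)\| = \sup_{w \in C} \|B_i w\|\), where \(C\) denotes the closure of the image of the unit ball under \(K\), reduces the problem to showing that \(B_i \to 0\) uniformly on the compact set \(C\).

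Given \(\varepsilon > 0\), I would exploit total boundedness of \(C\) to pick a finite \(\varepsilon/(3M)\)-net \(w_1,\ldots,w_n \in C\). Pointwise convergence at each of the finitely many \(w_k\) furnishes some \(N\) with \(\|B_i w_k\| < \varepsilon/3\) whenever \(i \ge N\) and \(k = 1, \ldots, n\). For an arbitrary \(w \in C\), choosing \(w_k\) with \(\|w - w_k\| < \varepsilon/(3M)\) and applying the triangle inequality to \(B_i w = B_i(w - w_k) + B_i w_k\) gives \(\|B_i w\| < M \cdot \varepsilon/(3M) + \varepsilon/3 < \varepsilon\), which is exactly operator-norm convergence \(A_i \circ K \to A \circ K\). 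No real obstacle arises; the only point requiring care is the distinction between the (strong) pointwise convergence that is hypothesized and the (operator norm) convergence that is concluded, with the compactness of \(K\) providing precisely the bridge via precompactness of \(K(B_V)\).
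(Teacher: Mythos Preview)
Your argument is correct and is the standard proof of this fact: reduce to uniform convergence on the precompact set \(K(B_U)\) via a finite net and the uniform bound on \(\|A_i - A\|\). The paper itself does not supply a proof of this lemma but simply cites Lipyanskiy's thesis, so there is nothing to compare against beyond noting that your approach is the expected one. One small remark: as written in the paper the domains do not quite type-check (with \(A_i : V \to W\) and \(K : W \to U\) the composite \(A_i \circ K\) is ill-formed); the intended statement, consistent with how the lemma is applied later, has \(K\) mapping into \(V\), and you have silently read it that way.
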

We also need the following version of the G{\aa}rding inequality,
proven in \cite{Shu1992}*{Appendix 1, Lemma 1.4}:
\begin{proposition}[cylindrical G{\aa}rding inequality]
	\label{prop:cylindrical-Garding}
	For every \(s,t \in \mathbb{R}\) there exists \(C>0\)
	such that for any \(u \in C_0^\infty(X;E)\)
	\[ \lVert u\rVert_{L^2_{s+1}} \leq C( \lVert  D_0 u \rVert_{L^2_s} + \lVert u\rVert_{L^2_t}).\]
\end{proposition}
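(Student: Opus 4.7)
The plan is to reduce, via a partition of unity, to two independent estimates: one on a relatively compact body of $X$, where the standard G{\aa}rding inequality for order-$1$ elliptic operators applies, and one on each cylindrical end, where the translation-invariant model operator is analyzed by Fourier methods. Fix a smooth partition of unity $\{\chi_0, \chi_1, \ldots, \chi_m\}$ subordinate to the decomposition of $X$ into a relatively compact core and finitely many ends $E_j \cong (T_j,\infty) \times N_j$. Since each commutator $[D_0, \chi_j]$ is a bounded zeroth-order operator on all Sobolev spaces, proving the inequality for each $\chi_j u$ separately yields it for $u$, so I would treat the core and the ends as independent cases.

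For the compactly supported piece $\chi_0 u$, the desired estimate reduces to the classical elliptic G{\aa}rding inequality on a relatively compact open neighborhood. Construct a pseudodifferential parametrix $Q$ of $D_0$ there, so that $QD_0 = I + S$ with $S$ smoothing; this immediately yields $\|\chi_0 u\|_{L^2_{s+1}} \leq C(\|D_0(\chi_0 u)\|_{L^2_s} + \|\chi_0 u\|_{L^2_t})$ for any $t \in \mathbb{R}$, because smoothing operators map any $L^2_t$ into any $L^2_{s+1}$ when acting on compactly supported distributions.

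For a piece $\chi_j u$ supported on a cylindrical end, the operator $D_0$ is asymptotic to a translation-invariant elliptic operator $D_\infty$ on $\mathbb{R} \times N_j$. For $D_\infty$ I would combine the Fourier transform in the $\mathbb{R}$ direction with a spectral decomposition of the induced elliptic operator on the compact cross-section $N_j$, reducing the estimate to a pointwise bound on the symbol. Ellipticity of $D_\infty$ provides a lower bound of the form $|\sigma_{D_\infty}(\tau, \xi)|^2 \geq c(1+\tau^2+|\xi|^2)$ modulo lower-order terms, and Plancherel then converts it into the translation-invariant G{\aa}rding inequality $\|v\|_{L^2_{s+1}} \leq C(\|D_\infty v\|_{L^2_s} + \|v\|_{L^2_t})$ for any $v \in C_0^\infty(\mathbb{R}\times N_j;E)$, with $C$ depending only on $s$ and $t$.

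The main obstacle is transferring the estimate from $D_\infty$ to $D_0$ while simultaneously accommodating an arbitrary regularity $t$ on the right-hand side. Because the coefficients of $D_0 - D_\infty$ tend to zero along the end, choosing $\chi_j$ supported on a sufficiently far-out tail $(T, \infty) \times N_j$ makes the perturbation $D_0 - D_\infty$ arbitrarily small as an operator $L^2_{s+1} \to L^2_s$, so its contribution is absorbed into the left-hand side of the inequality established for $D_\infty$. Finally, the Fourier-analytic argument may naturally produce the estimate only for some fixed $t_0 \leq s$; to allow arbitrary $t \in \mathbb{R}$, note that for $t \geq s+1$ the bound $\|u\|_{L^2_{s+1}} \leq \|u\|_{L^2_t}$ is automatic, while for $t < t_0$ the interpolation inequality $\|u\|_{L^2_{t_0}} \leq \varepsilon \|u\|_{L^2_{s+1}} + C_\varepsilon \|u\|_{L^2_t}$ upgrades the estimate, with the $\varepsilon$-term absorbed back into the left-hand side.
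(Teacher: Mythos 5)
Your argument is essentially sound, but it takes a genuinely different route from the paper: the paper does not prove this proposition at all, it quotes it from \cite{Shu1992}*{Appendix 1, Lemma 1.4}, where the inequality is established for uniformly elliptic operators on manifolds of bounded geometry via a uniform pseudodifferential parametrix whose remainder is uniformly smoothing and hence bounded from any \(L^2_t\) into any \(L^2_{s+1}\). You instead exploit the specific asymptotically cylindrical structure: classical G{\aa}rding on the compact core, Plancherel in the axial variable combined with the spectral decomposition of the cross-sectional operator for the translation-invariant model \(D_\infty\) on the ends, absorption of the small perturbation \(D_0-D_\infty\) far out, and interpolation to reach arbitrary \(t\). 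Shubin's route buys generality (arbitrary order, no cylindrical structure, all uniformity issues packaged in the calculus); yours buys a self-contained and elementary proof in exactly the geometry at hand. Three small points to tighten: the commutator terms \([D_0,\chi_j]u\) from your localization contribute \(\lVert u\rVert_{L^2_s}\) on the right-hand side, so the interpolation-and-absorption step you invoke only at the end is already needed when recombining the pieces for \(t<s\); the spectral decomposition on the cross-section implicitly assumes the model operator has the Dirac-type form \(\sigma(\partial_t+A)\) with \(A\) self-adjoint elliptic on the compact cross-section---this covers the operators actually used in the paper, but for a general asymptotically cylindrical elliptic \(D_0\) this step should be replaced by a parameter-elliptic estimate or a parametrix on the cylinder (which is in effect what the cited lemma provides); and smallness of \(D_0-D_\infty\colon L^2_{s+1}\to L^2_s\) for large \(|s|\) requires convergence of the coefficients together with sufficiently many derivatives, which is part of the asymptotically cylindrical hypothesis. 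With these understood, your order of quantifiers---fix the constant for \(D_\infty\) first, then cut the ends far enough out---is correct, and the proof goes through.
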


With these in hand, we are ready to prove
the \namedref{thm:low-regularity}.

\begin{proof}
	[Proof of \nameCref{thm:low-regularity}]
	By considering
	\[
		\begin{pmatrix}
			0 & D_0^\ast \\
			D_0 & 0
		\end{pmatrix}
		+
		\begin{pmatrix}
			0 & K^\ast \\
			K & 0
		\end{pmatrix}
		:L^2_{1}(X;E \oplus F) \to L^2(X; E \oplus F) 
	\]
	where \(D_0^\ast\) is the formal adjoint of \(D_0\)
	we may assume, without loss of generality, 
	that \(E = F\) and \(D_0\) is formally self\hyp{}adjoint.
	Since \(X\) has bounded geometry
	and \(D_0\) is uniformly elliptic,
	Proposition 4.1 in \cite{Shu1992}*{Section 1}
	implies that the minimal and maximal operators
	\(D_0^{min}\) and \(D_0^{max}\)
	of \(D_0: L^2(X;E) \to L^2(X;E)\) coincide,
	and their domains are equal to \(L^2_1(X;E)\).
	Thus \(D_0\) is essentially self-adjoint.

	Consider the equation
	\begin{equation}
		\label{eqn:dirac-large-c}
		(D_0+ic) \psi + K \psi = h + i c v
	\end{equation}
	for large \(c \in \mathbb{R}\).
	Certainly, \(\psi = v \in L^2(X;E)\) solves the equation.
	We will prove that for large \(c\) 
	equation \eqref{eqn:dirac-large-c} 
	has a~unique solution \(\psi\) in \(L^2_1(X;E)\),
	which is also unique in \(L^2(X;E)\),
	so that \(v = \psi \in L^2_1(X;E)\), as wished.

	Firstly, we want to prove \( D_0+ic : L^2_{1}(X;E) \to L^2(X;E) \)
	is invertible.
	Since the spectrum of \(D_0\) is real,
	\( (D_0+ic)^{-1}:L^2(X;E) \to L^2(X;E) \) exists 
	and is bounded by \( \frac{1}{|c|} \) 
	(cf. \cite{Lan1993}*{Chapter XIX, Theorem 2.4}).
	Therefore by
	\autoref{prop:cylindrical-Garding} we have the first inequality:
	\begin{align*}
		\lVert  (D_0+ic)^{-1} u \rVert_{L^2_1}
		&\leq \lVert  D_0 (D_0 + ic)^{-1} u\rVert_{L^2} + \lVert (D_0+ic)^{-1} u \rVert_{L^2}
		\\ & \leq
		\lVert  u - ic (D_0+ic)^{-1} u \rVert_{L^2} + (1/|c|) \lVert u\rVert_{L^2}
		\\ & \leq (2 + 1/|c|) \lVert u\rVert_{L^2} 
	\end{align*}
	and therefore \( (D_0+ic)^{-1} \)
	is a~bounded operator \(L^2(X;E) \to L^2_1(X;E)\).

	Secondly, we want to prove weak convergence of
	\( (D_0+ic)^{-1} \) to \(0\) as \(|c| \to \infty\),
	i.e., that for each \(v \in L^2(X;E)\)
	we have \( (D_0+ic)^{-1}(v) \to 0\) in \(L^2_1(X;E)\)
	as \(|c| \to \infty\).
	The spectral theorem for unbounded self-adjoint operators
	\cite{Lan1993}*{Chapter XIX, Theorem 2.7}
	provides us with an~orthogonal decomposition
	\( L^2(X;E) = \hat{\bigoplus}_n H_n\)
	such that the restriction
	\(D_n = D_0|_{H_n}:H_n \to H_n\)
	is a~bounded operator (in \(L^2\)\hyp{}norm on \(H_n\))
	and \(D_0 = \hat{\bigoplus}_n D_n\).
	In particular, \(H_n \subset \mathrm{Dom}(D_0) = L^2_1(X;E)\).
	For each \(v_n \in H_n\) we thus have,
	by \autoref{prop:cylindrical-Garding},
	\[ \lVert v_n\rVert_{L^2_1} \leq C( \lVert D_0 v_n\rVert_{L^2} + \lVert v_n\rVert_{L^2})
	\leq C (C_n + 1) \lVert v_n\rVert_{L^2} = C_n' \lVert v_n\rVert_{L^2}, \]
	where \(C_n = \lVert D_n\rVert_{L^2}\).
	Therefore
	\begin{equation*}
		\lVert  (D_0+ic)^{-1} v_n \rVert_{L^2_1}
		\leq C_n' \lVert  (D_0 + ic)^{-1} v_n \rVert_{L^2}
		\leq \frac{C_n'}{|c|} \lVert v_n\rVert_{L^2}
		\xrightarrow{|c| \to \infty} 0
	\end{equation*}
	which proves that for any \(v \in L^2(X;E)\),
	\( (D_0+ic)^{-1} v \xrightarrow{|c|\to \infty} 0 \) in \(L^2_1(X;E)\),
	i.e., \( (D_0 + ic)^{-1}\) converges weakly to \(0\),
	as wished.

	We proceed to proving existence and uniqueness of solutions to
	\eqref{eqn:dirac-large-c} in \(L^2_1(X;E)\) for large \(|c|\).
	Using \autoref{lem:op-weak-comp-convergence}
	we conclude that \( T = (D_0+ic)^{-1} K : L^2_1(X;E) \to L^2_1(X;E) \)
	converges strongly to \(0\).
	Thus, for large \(|c|\), 
	the operator \( \mathrm{Id} 
	+ (D_0+ic)^{-1} K : L^2_1(X;E) \to L^2_1(X;E)\) 
	is invertible.
	Composing with \((D_0 + ic)\) we conclude that
	\( D_0+ic + K = D+ic : L^2_1(X;E) \to L^2(X;E) \) is invertible
	for large \(|c|\).
	Existence and uniqueness of \( \psi \in L^2_1(X;E) \)
	solving \eqref{eqn:dirac-large-c} follows.

	To conclude the first part of the proof
	we need to establish uniqueness of solutions
	to \eqref{eqn:dirac-large-c} in \(L^2(X;E)\).
	We have that \( (D_0 + ic + K) (\psi - v) = 0\).
	This implies that \(\psi - v\) is perpendicular in \(L^2(X;E)\)
	to the image of \( D_0^\ast - ic + K^\ast = D_0 - ic + K
	:L^2_1(X;E) \to L^2(X;E)\).
	However, we already established that for large \(|c|\)
	the latter operator is invertible
	and it follows that \(\psi - v = 0\);
	thus \( v = \psi \in L^2_1(X;E) \).

	Finally, it remains to consider the more general case \(v, h \in L^2_{loc}\).
	For any compact
	\(A \subset X\) we can
	take a~compactly supported bump function \(\rho:X \to [0,1]\)
	with \(\rho|_A=1\)
	and define \(v' = \rho v \in L^2(X; E)\).
	Then \(D v' = h' \) for some \(h' \in L^2(X;E)\)
	and therefore \(v' \in L^2_1(X;E)\) as proven above,
	so \(v|_A \in L^2_1(A;E|_A)\).
	Repeating the argument for every compact \(A \subset X\)
	shows that \(v \in L^2_{1,loc}(X;E) \).
\end{proof}

\subsection{A strong UCP for Dirac operators}

Let \( M\) be a~connected Riemannian manifold
and \(S\) a~real (resp. complex) Dirac bundle over it
(e.g., the real (resp. complex) 
spinor bundle associated to a~\spinc{}-structure on \(M\))
with connection \(A\).
Denote by \(\Dirac_A:\Gamma(S) \to \Gamma(S)\) the corresponding Dirac operator.
Let \(V \in L^n(M;\mathbb{R})\) be a~potential.
Here we prove the following
unique continuation theorem
for spinors and potentials of low regularity.
\begin{strongucpthm}
	\label{thm:unique-cont-dirac-Lr}
	The differential inequality
	\begin{equation*}
		\mleft|\Dirac_A \Phi\mright| \leq V |\Phi|
	\end{equation*}
	has unique continuation property in \(L^{\frac{2n}{n+2}}_{1,loc}(M;S)\).
\end{strongucpthm}

The version of this theorem for \(d+d^\ast\)
instead of \(\Dirac_A\) has been proven in \cite{Wol1992}*{Theorem 2}:
\begin{theorem}
	[Wolff, \cite{Wol1992}*{Theorem 2}]
	\label{thm:wolff-ucp}
	Suppose \(M\) is an~\(n\)-dimensional manifold, \(n \geq 3\),
	\(p = \frac{2n}{n+2}\), 
	and \(\omega \in L^p_{1,loc}(\Omega^\ast(M))\)
	such that \( |d \omega| + |d^\ast \omega| \leq V |\omega|\)
	with \(V \in L^n_{loc}(M)\).
	Then if \(\omega\) vanishes on an~open set,
	it vanishes identically.
\end{theorem}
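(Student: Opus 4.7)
The strategy is to reduce the stated Dirac inequality to the form inequality already resolved by \autoref{thm:wolff-ucp}, by associating to \(\Phi\) a~complex differential form \(\omega\) that is linear in \(\Phi\) and inherits the analogous differential inequality for \(d+d^\ast\). Since both hypothesis and conclusion are local and \(M\) is connected, it suffices to establish local unique continuation at an~arbitrary point \(p \in M\); global propagation then follows by a~standard chain argument along overlapping balls.

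Work in a~normal coordinate ball \(U\) around \(p\) on which \(S\) and the Clifford bundle \(\mathrm{Cl}(TM)\) are trivialized. Fix nonzero \(\Psi_0 \in S_p\) and extend it to a~section \(\Psi\) over \(U\) by parallel transport along radial geodesics; then \(\Psi\) is smooth, nonvanishing on some smaller \(U' \subset U\), and \(|\nabla_A \Psi|(x) = O(\operatorname{dist}(x,p))\). Form \( \omega = \Phi \otimes \Psi^\ast \in \Gamma(\mathrm{End}(S))\) and identify \(\mathrm{End}(S)\) with \(\Lambda^\ast T^\ast M \otimes \mathbb{C}\) via the standard Clifford isomorphism \( \mathrm{End}(S) \cong \mathrm{Cl}(TM)_{\mathbb{C}} \cong \Lambda^\ast T^\ast M \otimes \mathbb{C}\). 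In the real or odd\hyp{}dimensional or general Dirac\hyp{}bundle case, one works instead with \(S \otimes E^\ast\) for an~auxiliary bundle \(E\) so that \(S \otimes E^\ast\) embeds in the form bundle; the construction and estimates below proceed componentwise. Since \(\Psi\) is smooth, multiplication preserves the Sobolev class and \( \omega \in L^{\frac{2n}{n+2}}_{1,loc}(U';\Lambda^\ast \otimes \mathbb{C})\).

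The key identity is that on Clifford sections the operator \(\sum_i e_i \cdot \nabla_{e_i}\) corresponds, under the above isomorphism, to \(d+d^\ast\) on forms modulo bounded zeroth\hyp{}order curvature terms. Combined with the Leibniz rule, this yields
\begin{equation*}
	|(d+d^\ast)\omega| \leq |\Dirac_A \Phi|\,|\Psi| + C(|\nabla_A \Psi| + 1)|\Phi|,
\end{equation*}
so on \(U'\), where \(|\Psi|\) is bounded below and \(|\omega| \gtrsim |\Phi|\,|\Psi|\), we may rearrange to obtain \( |(d+d^\ast)\omega| \leq V'|\omega| \) with \( V' = V + C(|\nabla_A \Psi|/|\Psi| + 1) \in L^n_{loc}(U')\). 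If \(\Phi\) vanishes on an~open subset of \(U'\), then so does \(\omega\), and \autoref{thm:wolff-ucp} forces \(\omega \equiv 0\) on \(U'\). Nonvanishing of \(\Psi\) then gives \(\Phi \equiv 0\) on \(U'\), completing the local step.

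The main obstacle is the identification of the twisted Dirac operator \(\sum_i e_i \cdot \nabla_{e_i}\) acting on \(\mathrm{End}(S)\)\hyp{}valued (or \(S \otimes E^\ast\)\hyp{}valued) sections with \(d+d^\ast\) on forms modulo zeroth\hyp{}order terms. While this is classical for pure spin or \spinc{} bundles, it requires care for arbitrary real or complex Dirac bundles, including the choice of auxiliary \(E\) realizing the Clifford bundle as a~representation and the bookkeeping of curvature contributions from the induced connection on \(E\); any such terms are bounded and absorbed into \(V'\) without leaving \(L^n_{loc}\). A~secondary point is the pointwise lower bound \(|\omega| \gtrsim |\Phi|\,|\Psi|\), which reduces to the nondegeneracy of the pairing used in the identification.
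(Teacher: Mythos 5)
There is a fundamental mismatch here: the statement you were asked to prove is Wolff's unique continuation theorem for the operator \(d+d^\ast\) acting on differential forms, which in this paper is an \emph{external input} --- it is quoted from Wolff's article \cite{Wol1992} and never proved in the paper, its content being a hard harmonic-analysis result (Carleman/\(L^p\)-type estimates for \(d+d^\ast\) with an \(L^n\) potential and \(\omega \in L^{2n/(n+2)}_{1,loc}\)). Your proposal does not address this statement at all: its first sentence already invokes ``the form inequality already resolved by \autoref{thm:wolff-ucp}'' and proceeds to reduce a Dirac-operator inequality to it. In other words, you have written an argument for the \namedref{thm:unique-cont-dirac-Lr} (the UCP for \(\Dirac_A\)), taking the stated theorem as a black box; read as a proof of the stated theorem itself, the proposal is circular and supplies none of the required analysis. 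That is the gap: nothing in your text proves, or even engages with, unique continuation for \(d+d^\ast\) under an \(L^n_{loc}\) potential bound.

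For completeness, even viewed as a reduction of the Dirac UCP to Wolff's theorem, your route differs from the paper's. You tensor \(\Phi\) with an auxiliary radially parallel spinor \(\Psi\), identify \(\mathrm{End}(S)\) (or \(S \otimes E^\ast\)) with \(\Lambda^\ast \otimes \mathbb{C}\), and absorb the Leibniz terms involving \(|\nabla_A \Psi|/|\Psi|\) into the potential; the paper instead localizes to \(\mathbb{R}^n\), replaces \(A\) by a flat connection (the difference being order zero), decomposes \(S\) into irreducible components isomorphic to the spinor bundle, embeds the spinor bundle into the Clifford bundle, and uses the identification of the Dirac operator on \(\mathrm{C\ell}(\mathbb{R}^n)\) with \(d+d^\ast\) (cf. \cite{LM1989}*{Theorem 5.12}), finally observing that Wolff's proof goes through for forms with coefficients in \(\mathbb{R}^k\) or \(\mathbb{C}^k\). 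Your variant avoids the decomposition but leans on the unproved identification of the twisted Dirac operator on \(\mathrm{End}(S)\)-valued sections with \(d+d^\ast\) up to controllable zeroth-order terms, and on the case of general real Dirac bundles which you only sketch via an unspecified auxiliary \(E\). None of this, however, repairs the basic problem that the statement actually at issue is assumed rather than proved.
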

It thus suffices to reduce our problem to Wolff's result,
following the idea of \cite{Mand1994}.
Since the proof in \cite{Mand1994} does not explain
the reduction rigorously, we describe the procedure below.

\begin{proof}
	The problem is local, thus we need only to consider the case
	of \(M\) being \(\mathbb{R}^n\) with some metric \(g\).
	If \(A_0\) is the flat connection on \(S\),
	\(\Dirac_A - \Dirac_{A_0}\) is a~smooth operator of order \(0\),
	so (again using locality) we can assume that \(A\) is flat.
	Moreover, by contractibility of \(\mathbb{R}^n\),
	we can decompose \(S\) into irreducible components,
	and irreducible components must be isomorphic to the real (resp. complex)
	spinor bundle \(\widetilde S\) (resp. \(\widetilde S_{\mathbb{C}}\))
	associated to the unique spin structure.
	Thus we have reduced the problem to the case where 
	\(S = \widetilde S \otimes \mathbb{R}^k\)
	(resp. \(S = \widetilde S \otimes \mathbb{C}^k\)) for some \(k\).

	The real (resp. complex) spinor bundle \(\widetilde S\) (resp. \(\widetilde S_{\mathbb{C}}\))
	embeds into \(\mathrm{C\ell}_n(\mathbb{R}^n)\)
	(resp. \(\mathbb{C}\ell_n(\mathbb{R}^n)\)).
	Furthermore, \cite{LM1989}*{Theorem 5.12} implies
	that the Dirac operator on the Clifford bundle
	\(\mathrm{C\ell}_n(\mathbb{R}^n)\)
	(resp. \(\mathbb{C}\ell_n(\mathbb{R}^n) = \mathrm{C\ell}(\mathbb{R}^n)
	\otimes \mathbb{C}\))
	is equivalent to \( d + d^\ast\) on 
	\(\Lambda^\ast(\mathbb{R}^n)\)
	(resp. \(\Lambda^\ast(\mathbb{R}^n) \otimes \mathbb{C}\))
	via the canonical isomorphism 
	\(\mathrm{C\ell}(\mathbb{R}^n) \simeq \Lambda^\ast(\mathbb{R}^n)\).
	Thus, \(\Dirac_{A_0}\) is equivalent to
	\( (d+d^\ast) \otimes 1_{\mathbb{R}^k}
	\Lambda^\ast(\mathbb{R}^n) \otimes \mathbb{R}^k \to 
	\Lambda^\ast(\mathbb{R}^n) \otimes \mathbb{R}^k\)
	(resp. \( (d+d^\ast) \otimes 1_{\mathbb{C}^k}
	\Lambda^\ast(\mathbb{R}^n) \otimes \mathbb{C}^k \to 
	\Lambda^\ast(\mathbb{R}^n) \otimes \mathbb{C}^k\)).
	The proof of \autoref{thm:wolff-ucp} goes through
	for differential forms with coefficients in \(\mathbb{R}^k\)
	(resp. \(\mathbb{C}^k\)),
	establishing the unique continuation property for 
	\((d+d^\ast) \otimes \mathbb{R}^k\)
	(resp. \( (d+d^\ast) \otimes 1_{\mathbb{C}^k}\)).
	This finishes the proof.
\end{proof}

In the article we will use the following special case:
\begin{corollary}[UCP for Dirac operators in 4d]
	\label{cor:unique-cont-dirac-4d}
	In the setting of
	the \namedref{thm:unique-cont-dirac-Lr},
	assume \(n=4\) and \(V \in L^2_1(M;\mathrm{Aut}(S))\).
	Then any solution \(\Phi \in L^2_{1,loc}(M;S)\) to
	\begin{equation*}
		\Dirac_A \Phi + V \Phi = 0
	\end{equation*}
	which is zero on some open set
	is identically zero.
\end{corollary}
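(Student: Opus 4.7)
The plan is to apply the \namedref{thm:unique-cont-dirac-Lr} with the potential taken to be the pointwise operator norm \(|V|\) of \(V\) acting on fibers of \(S\). From the equation \(\Dirac_A \Phi = -V\Phi\) one gets the pointwise bound
\[ |\Dirac_A \Phi| \leq |V| \cdot |\Phi|, \]
which is exactly the differential inequality in the hypothesis of the theorem. The remaining task is to verify that \(|V|\) and \(\Phi\) have the regularity demanded there.

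Since \(n=4\), the theorem requires the potential to lie in \(L^4_{loc}(M;\mathbb{R})\). The pointwise operator norm is a Lipschitz function of \(V\), so \(|V| \in L^2_1(M;\mathbb{R})\) inherits the Sobolev regularity of \(V\), and the critical Sobolev embedding \(L^2_1 \hookrightarrow L^4\) in dimension four gives \(|V| \in L^4_{loc}(M;\mathbb{R})\) as required. For the spinor, the theorem asks for \(\Phi \in L^{2n/(n+2)}_{1,loc} = L^{4/3}_{1,loc}(M;S)\); since on any bounded domain H\"older's inequality gives \(L^2 \subset L^{4/3}\), one has \(L^2_{1,loc} \subset L^{4/3}_{1,loc}\), so \(\Phi \in L^{4/3}_{1,loc}(M;S)\) automatically.

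With both hypotheses verified, the \namedref{thm:unique-cont-dirac-Lr} applies and yields that \(\Phi\) vanishes identically on \(M\). There is no real obstacle in this argument: the corollary is essentially a convenient repackaging of the strong UCP in the form that will be used in the moduli space arguments, where the zeroth-order perturbation arises as multiplication by an \(L^2_1\) configuration in dimension four and the solution has the same regularity as the generic element of the configuration space. The only point deserving attention is the implicit use of the Lipschitz-composition rule to ensure \(|V|\) inherits the \(L^2_1\) regularity of \(V\), so that the critical Sobolev embedding can be invoked.
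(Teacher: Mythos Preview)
Your argument is correct and is exactly the intended one: the paper states the corollary without proof, treating it as an immediate consequence of the \namedref{thm:unique-cont-dirac-Lr} via precisely the reduction you describe. One small simplification: you do not actually need \(|V|\in L^2_1\); it suffices to observe that the pointwise operator norm is dominated by the Hilbert--Schmidt norm, whose entries lie in \(L^2_1\hookrightarrow L^4\), giving \(|V|\in L^4_{loc}\) directly without invoking the Lipschitz composition rule.
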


\section{Seiberg-Witten moduli spaces in split Coulomb slice}
\label{sec:Seiberg-Witten-moduli-spaces-in-split-Coulomb-slice}

In this section we introduce the moduli spaces of the Seiberg\hyp{}Witten equations
on a~Riemannian \(4\)\hyp{}manifold with boundary a~collection of rational homology
spheres,
together with the restriction maps to the boundary.
For \(3\)\hyp{}manifolds, we introduce the \emph{Coulomb slice}
and its \emph{polarization}, a~decomposition of the tangent space
into a~sum of two infinite\hyp{}dimensional subspaces.
For \(4\)\hyp{}manifolds, we introduce the \emph{double Coulomb slice}
and what we call the \emph{split Coulomb slice}
together with the \emph{split gauge group}.
Since the restriction maps are generally not invariant with respect
to the split gauge group, we need to introduce appropriate
\emph{twisted restriction maps} as well.

The split gauge fixing is a~key novel element that generalizes the gauge slice
introduced by Khandhawit \cite{Kha2015}
and Lipyanskiy \cite{Lip2008}.
It simplifies the proof of the \namedref{thm:composing-cobordisms}
letting us to reduce it to the case of untwisted restriction maps.

\subsection{Coulomb slice on \texorpdfstring{\(3\)}{3}-manifolds}

We begin by introducing
\emph{polarizations} on the Seiberg\hyp{}Witten 
configuration space in Coulomb gauge
on an~oriented rational homology sphere \(Y\)
and collections of such.

Let \(g\) be a~Riemannian metric
and \(\mathfrak{s}\) be a~\spinc{} structure on \(Y\).
Denote by \(S_Y\) the associated spinor bundle
and choose a~smooth \spinc{} connection \(B_0\).
The \emph{Seiberg\hyp{}Witten configuration space} on \(Y\)
is the space
\begin{equation*}
	\ConfThree{Y} = \ConfThreeFull{B_0}{Y}
\end{equation*}
consisting of pairs \((B,\Psi)\) of a~\emph{\spinc{} connection}
and a~\emph{spinor} on \(Y\).

In Seiberg\hyp{}Witten theory one investigates
the Chern\hyp{}Simons\hyp{}Dirac functional \(\mathcal{L}\)
\begin{equation*}
	\mathcal{L}(B, \Psi) 
	= - \frac{1}{8} \int_Y (B^t - B^t_0) 
	\wedge (F_{B^t} + F_{B^t_0} )
	+ \frac{1}{2} \int_Y \langle \Dirac_{B} \Psi, \Psi \rangle.
\end{equation*}
on the configuration space.
The \emph{gauge group} \(\Gauge(Y) = L^2_{3/2}(Y;S^1)\) acts on \(\ConfThree{Y}\)
via \(u (A, \Phi) = (A - u^{-1}du, u \Phi)\)
where \(u \in \Gauge(Y)\),
leaving \(\mathcal{L}\) invariant.
If one used spaces of higher regularity,
one could work with the quotient of the configuration space by the action
of the gauge group.
However, in the low regularity setting the action of \(\Gauge(Y)\)
on the spinors is not continuous.
Because of that 
(and in applications
concerning the Seiberg\hyp{}Witten stable homotopy type, cf. \cite{KLS2018}),
it is preferable to take the Coulomb slice as the model for the quotient
by the identity component of the gauge group.
Indeed, for \(Y\) a~rational homology sphere 
the Hodge decomposition gives
the \(L^2\)\hyp{}orthogonal decomposition
\begin{equation*}
	\Omega^1(Y) = \Omega^1_C(Y) \oplus \Omega^1_{cC}(Y)
\end{equation*}
where
\begin{math}
	\Omega^1_{cC}(Y) = \{ b \in \Omega^1(Y) | d^\ast b = 0\}
	= d^\ast (\Omega^2(Y))
\end{math}
is the space of (smooth) coclosed forms and
\begin{math}
	\Omega^1_C(Y) = \{ b \in \Omega^1(Y) | db = 0 \}
	= d(\Omega^0(Y))
\end{math}
is the space of (smooth) closed forms.
Denote by \(\proj_d\) the projection 
\begin{math}
	\proj_d: \SForms{1/2}{Y}
	\to \SForms[C]{1/2}{Y}
	= d\left( L^2_{3/2} (i \Omega^0(Y))\right)
\end{math}
along \(\SForms[cC]{1/2}{Y}\).
\begin{lemma}
	[gauge fixing in \(3\)d]
	\label{lem:gauge-fixing-in-3-d}
	On \(Y\), there is a~continuous choice of 
	\emph{based} and \emph{contractible} gauge transformations
	putting forms in the Coulomb slice, i.e., a~homomorphism
	\begin{align*}
		\SForms{1/2}{Y} &\to \GaugeIdBased(Y) \\
		a & \mapsto u_a
	\end{align*}
	such that
	\begin{math}
		a - u_a^{-1} du_a = \mleft( 1-\proj_d\mright)a \in \SForms[cC]{1/2}{Y}
	\end{math},
	where
	\begin{math}
		\GaugeIdBased(Y)
		= \left\{ e^f \middle| f \in L^2_{3/2}(i\Omega^0(Y)), \int_Y f = 0 \right\}
		\subset \Gauge(Y)
	\end{math}.
	For each \(a\), there is exactly one such \(u_a\).
\end{lemma}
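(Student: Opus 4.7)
The plan is to solve the gauge-fixing equation $u_a^{-1} du_a = \proj_d a$ by writing $u_a = e^{f_a}$ for an imaginary-valued scalar $f_a$ with zero mean, and then to verify the homomorphism, continuity, and uniqueness properties directly.

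First, since $Y$ is a connected rational homology sphere, $H^1(Y;\mathbb{R}) = 0$, so the Hodge decomposition extends to Sobolev completions and $\proj_d$ lands inside the $L^2_{1/2}$-closure of $d(L^2_{3/2}(i\Omega^0(Y)))$; moreover $\Delta$ restricts to an isomorphism between the mean-zero subspaces of $L^2_{3/2}(i\Omega^0(Y))$ and $L^2_{-1/2}(i\Omega^0(Y))$. For $a \in L^2_{1/2}(i\Omega^1(Y))$, the form $d^\ast a \in L^2_{-1/2}(i\Omega^0(Y))$ has zero mean, so I would define
\[ f_a := \Delta^{-1} d^\ast a \in L^2_{3/2}(i\Omega^0(Y)). \]
Then $\int_Y f_a = 0$ and $d f_a = \proj_d a$, and the map $a \mapsto f_a$ is bounded linear by standard elliptic estimates.

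Setting $u_a := e^{f_a}$, the imaginary-valuedness of $f_a$ gives $|u_a| = 1$ pointwise and thus $u_a \in \GaugeIdBased(Y)$; a direct computation yields $u_a^{-1} du_a = df_a = \proj_d a$, so $a - u_a^{-1} du_a = (1 - \proj_d) a$ as required. Linearity of $a \mapsto f_a$ together with commutativity of $S^1$-valued gauge transformations gives the homomorphism property $u_{a+b} = e^{f_a + f_b} = u_a u_b$. Continuity of $a \mapsto u_a$ follows from boundedness of $a \mapsto f_a$ into $L^2_{3/2}$ and continuity of the exponential on $L^2_{3/2}(Y; i\mathbb{R})$. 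For uniqueness, if $v = e^{g} \in \GaugeIdBased(Y)$ also satisfies $v^{-1} dv = \proj_d a$, then $d(g - f_a) = 0$, so $g - f_a$ is locally constant; connectedness of $Y$ together with the mean-zero normalization forces $g = f_a$, whence $v = u_a$.

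The main obstacle is the continuity of the exponential on the borderline space $L^2_{3/2}(Y; i\mathbb{R})$, since in dimension three $L^2_{3/2}$ just fails to embed into $L^\infty$. The imaginary-valuedness bounds $|e^f|$ pointwise, while Sobolev multiplication in dimension three applied to the series expansion of $e^f - 1$ controls the derivatives in $L^2_{3/2}$; this is precisely why the regularity $3/2$ is chosen for the gauge group. Once this is in place, every other step is direct Hodge-theoretic bookkeeping.
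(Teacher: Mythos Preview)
Your proof is correct and follows essentially the same approach as the paper: solve $df_a = \proj_d a$ for a mean-zero $f_a$ and set $u_a = e^{f_a}$. Your formula $f_a = \Delta^{-1} d^\ast a$ is exactly the paper's $f_a = G_d \Pi_d a$, since for $\Pi_d a = df$ with $\int_Y f = 0$ one has $d^\ast a = d^\ast df = \Delta f$. You also address the continuity of the exponential $L^2_{3/2}(Y; i\mathbb{R}) \to L^2_{3/2}(Y; S^1)$ at the borderline regularity, a point the paper leaves implicit.
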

\begin{proof}
	Denote \(\Omega^0_0(Y) = \left\{ f \in \Omega^0(Y) \middle| \int_Y f = 0 \right\} \).
	The exterior derivative 
	\(d : L^2_{3/2}(\Omega^0_0(Y)) \to \SForms[C]{1/2}{Y} \)
	has inverse \(G_d\).
	Denote by \(\Pi_d\) the orthogonal projection
	\(\Omega^1(Y) \to d(\Omega^0(Y))\).
	We take
	\(u_a = e^{G_d \Pi_d a}\)
	which has the desired properties.
	Uniqueness follows from the fact that
	\(df = 0\) and \(\int_Y f = 0\) imply \(f=0\).
\end{proof}
Moreover, for \(Y\) a~rational homology sphere
we have \(\Gauge(Y) = \GaugeId(Y) = 
\left\{ e^f \middle| f \in L^2_{3/2}(Y;i\mathbb{R}) \right\} \).
Indeed, taking \(\tilde{u} = u_{-u^{-1}du} u\)
gives us \(\tilde{u}\) with \(d(\tilde{u}^{-1} d \tilde{u})=0\),
which implies \(d \tilde{u}=0\) and thus \(\tilde{u}\)
is constant.
It follows that there is are bijections
\begin{align*}
	\CoulThree{Y} &\leftrightarrow
	\quotient{\mleft( \ConfThree{Y} \mright)}{\GaugeIdBased(Y)},
	\\
	\quotient{\mleft( \CoulThree{Y} \mright) }{S^1} &\leftrightarrow
	\quotient{\mleft( \ConfThree{Y} \mright)}{\Gauge(Y)},
\end{align*}
justifying the restriction to the Coulomb slice:
\begin{definition}
	[Coulomb slice]
	\label{def:Coulomb-slice}
	The \emph{Coulomb slice} on \(Y\) 
	with respect to the reference connection \(B_0\)
	is the space of configurations
	\begin{equation*}
		\CoulThree{Y}
		= \CoulThreeFull{B_0}{Y}
		\subset
		\ConfThree{Y}.
	\end{equation*}
\end{definition}

The following subspaces are crucial to the analysis of
Atiyah\hyp{}Patodi\hyp{}Singer boundary value problem
for the Seiberg\hyp{}Witten equations on \(4\)\hyp{}manifolds
with boundary \(Y\).
\begin{definition}
	[polarization on the Coulomb slice]
	\label{def:polarization-on-Coulomb-slice}
	We define
	\( H^+(Y,\mathfrak{s})\) (resp. \(H^-(Y,\mathfrak{s})\)) \(\subset \TangCoulThree{Y}\)
	to be the closure of the span of positive
	(resp. nonpositive)
	eigenvalues of 
	\begin{equation*}
		(\star d) \oplus \Dirac_{B_0} : 
		\TangCoulThree{Y} \to \TangCoulThree{Y}.
	\end{equation*}
	We denote by \(\Pi^\pm: \TangCoulThree{Y} \to H^\pm(Y,\mathfrak{s})\) 
	the projection onto
	\(H^\pm(Y,\mathfrak{s})\) along \(H^\mp(Y,\mathfrak{s})\).
\end{definition}
One of our goals is to prove that the moduli spaces
of solutions to the Seiberg\hyp{}Witten equations on \(X\)
are, in a~precise sense, comparable to 
the negative subspace \(H^-(Y,\mathfrak{s})\)
via the restriction map
(cf. \namedref{thm:semi-infinite-dimensionality-of-moduli-spaces}).

Finally, if \(Y = \bigsqcup_i Y_i\) is a~disjoint sum of 
oriented rational homology spheres \(Y_i\)
then we define
\begin{align*}
	\ConfThree{Y} 
	= \prod_i \ConfThree[i]{Y_i},
	&\quad
	\CoulThree{Y}
	= \prod_i \CoulThree[i]{Y_i},
	\\
	\mathcal{L}(\prod_i b_i,\prod_i \Psi_i)
	= \sum_i \mathcal{L}(b_i,\Psi_i)
	&\quad
	H^\pm(Y,\mathfrak{s})
	= \prod_i H^\pm(Y_i,\mathfrak{s}_i).
\end{align*}
Moreover, note that there are a~natural identifications
between configuration spaces for \(Y\) 
and oppositely oriented \(-Y\).
For a~\spinc{} structure \(\mathfrak{s}\)
with its spinor bundle \(S_Y\)
there is the conjugate \spinc{} structure \(\overline{\mathfrak{s}}\)
determined by the conjugate bundle \(\overline{S}_Y\),
and the anti\hyp{}linear isomorphism \(S_Y \simeq \overline{S}_Y\)
induces natural affine isomorphisms
\begin{align*}
	\ConfThree{Y} 
	&\simeq \Configuration{-Y}{\overline{\mathfrak{s}}}
	\quad
	\CoulThree{Y}
	\simeq \Configuration[cC]{-Y}{\overline{\mathfrak{s}}}
	\quad
	H^\pm(Y,\mathfrak{s})
	\simeq H^\mp(-Y,\overline{\mathfrak{s}}).
\end{align*}

\subsection{Split Coulomb slice on \texorpdfstring{\(4\)}{4}-manifolds}
\label{sec:split-Coulomb-slice-on-4-manifolds}

We turn our attention to the Seiberg\hyp{}Witten equations
and gauge fixings for configurations on a~connected oriented
\(4\)\hyp{}manifold \(X\) with nonempty boundary \(\partial X \neq
\varnothing\) satisfying \(b_1(\partial X) = 0\),
oriented using the outward normal.
As explained by Khandawit \cite{Kha2015},
the most convenient slice for these is a~kind of a~\textit{double} Coulomb slice
(which was already used by Lipyanskiy \cite{Lip2008}),
which imposes both coclosedness of the connection \(1\)\hyp{}form
on both \(X\) and \(\partial X\), as well as an~auxiliary condition
near \(\partial X\).
We drop this auxiliary condition from the definition of 
the \emph{double Coulomb slice}
and instead introduce the \emph{split Coulomb slice}
which generalizes the constructions of Khandhawit and Lipyanskiy.
This allows one to choose a~gauge fixing which do not require twisting
or ones that are more geometric in nature,
depending on one's needs.
Indeed, twisting is necessary in Khandhawit's and Lipyanskiy's gauge fixing,
in which
the restriction map may not commute with the residual gauge group action.

We begin by introducing the Seiberg\hyp{}Witten equations
on \(4\)\hyp{}manifolds.

\begin{definition}
	[Seiberg-Witten equations]
	\label{def:Seiberg-Witten-equations}
	The \emph{Seiberg\hyp{}Witten map} is defined by
	\begin{equation*}
		\SW : \ConfFour{X} \to \SWDomain{X},
	\end{equation*}
	\begin{equation*}
		\SW(A,\Phi) = \mleft( 
			\frac 1 2 F^+_{A^t} - \rho^{-1}((\Phi\Phi^\ast)_0), 
			\Dirac_{A}^+ \Phi 
		\mright),
	\end{equation*}
	where \(A^t\) denotes the connection induced by \(A\) on \(\det(S^+_X)\)
	and \(F^+_{A^t}\) denotes the self\hyp{}dual part of its curvature,
	according to the splitting \( \Lambda^2(X) = \Lambda^+(X) \oplus
	\Lambda^-(X)\) by the eigenspaces of the Hodge star \(\star\).

	The \emph{Seiberg\hyp{}Witten equations}
	are the equations given by \(\SW(A,\Phi)=0\), that is,
	\begin{equation*}
		\left\{
		\begin{aligned}
			\frac 1 2 F^+_{A^t} - \rho^{-1}((\Phi\Phi^\ast)_0) \hspace{-0.5em}
			&= 0, \\
			\Dirac_{A}^+ \Phi  &= 0.
		\end{aligned}
		\right.
	\end{equation*}
\end{definition}
Note that continuity and smoothness of the map \(\SW\) follow from
\autoref{thm:multiplication} and the
fact that continuous multilinear maps on Banach spaces
are smooth.

These equations are equivariant with respect to the action of the gauge group
\( \Gauge(X) = L^2_2(X;S^1) \).
As is easily seen, the solution set is invariant under this action.
\begin{lemma}
	[gauge group action on a~\(4\)\hyp{}manifold]
	\label{lem:gauge-action-4d}
	The gauge group \(\Gauge(X)\) acts on \(\ConfFour{X}\)
	via \(u (A, \Phi) = (A - u^{-1}du, u \Phi)\)
	where \(u \in \Gauge(X)\).
	Moreover, \(\SW(A, \Phi) = 0\) if and only if \(\SW(u(A,\Phi)) = 0\).
\end{lemma}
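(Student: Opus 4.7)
The statement has two parts: that the formula defines a group action on the full $L^2_1$ configuration space, and that it preserves the zero set of $\SW$. Both reduce to standard computations together with a careful check that all pointwise products are controlled by the referenced Sobolev multiplication theorem. For smooth $u$ the lemma is a textbook calculation; the only subtlety is that $u \in L^2_2(X; S^1)$ is not continuous in four dimensions (borderline embedding), so one has to justify every product and every Leibniz rule at the stated regularity.

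First I would check well-definedness of $u(A, \Phi)$. The connection component is $A - u^{-1} du$; since $|u| = 1$ a.e.\ one has $u^{-1} = \bar u \in L^2_2$, and $du \in L^2_1$, so by \autoref{thm:multiplication} the product $u^{-1} du$ lies in $L^2_1$, whence $A - u^{-1} du$ is a legitimate $L^2_1$ \spinc{} connection. For the spinor component, $u\Phi$ is the product of an $L^2_2$ scalar with an $L^2_1$ section, again giving an $L^2_1$ section by the same multiplication theorem. The action axioms $1 \cdot (A, \Phi) = (A, \Phi)$ and $u(v(A,\Phi)) = (uv)(A,\Phi)$ are pointwise identities; the only nontrivial piece is the cocycle $(uv)^{-1} d(uv) = u^{-1} du + v^{-1} dv$, which is valid pointwise a.e.\ once one verifies the Leibniz rule $d(uv) = u\,dv + v\,du$ at the $L^2_1 \cdot L^2_2$ level---this follows by approximating $u, v$ by smooth maps and using continuity of the multiplication in \autoref{thm:multiplication}.

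Next I would verify that $\SW(u(A, \Phi)) = 0$ iff $\SW(A, \Phi) = 0$ by showing $\SW$ is equivariant under the action. For the curvature term, the induced connection on $\det(S^+_X)$ transforms as $A^t \mapsto A^t - 2 u^{-1} du$, so
\[
F^+_{u(A)^t} = F^+_{A^t} - 2 (d(u^{-1} du))^+.
\]
The form $u^{-1} du$ is closed (smooth $u: X \to S^1$ satisfies $d(u^{-1} du) = - u^{-1}du \wedge u^{-1} du = 0$); to extend this identity to $u \in L^2_2$ I approximate by smooth $u_n$ and pass to the limit using continuity of multiplication $L^2_2 \times L^2_1 \to L^2_1 \hookrightarrow L^2$ and of the exterior derivative. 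For the quadratic term, $(u\Phi)(u\Phi)^{*} = u\bar u\, \Phi\Phi^{*} = \Phi\Phi^{*}$ since $|u|=1$ a.e., so the trace-free endomorphism $(\Phi\Phi^{*})_0$ is unchanged. Thus the first component of $\SW$ is invariant.

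For the Dirac equation I would establish the conjugation identity
\[
\Dirac^+_{A - u^{-1} du}(u\Phi) = u \Dirac^+_A \Phi,
\]
which at the smooth level follows from the Leibniz rule $\Dirac^+_A(u\Phi) = u \Dirac^+_A \Phi + \rho(du)\Phi$ combined with the definition of the connection change. Again the extension to $u \in L^2_2$, $\Phi \in L^2_1$ and $A \in L^2_1$ is by density: approximate $(u, A, \Phi)$ by smooth triples, use continuity of multiplication in \autoref{thm:multiplication} to pass $\Dirac^+$ through the products, and take limits in $L^2$. The main obstacle is purely bookkeeping at the Sobolev level---verifying that every Leibniz and pointwise identity used in the smooth case remains valid at the endpoint regularity $L^2_2$ for $u$ in dimension four---and this is exactly what the multiplication theorem is designed to provide.
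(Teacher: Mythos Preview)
Your outline is more detailed than the paper (which leaves the lemma essentially unproved, supplying only a one-line remark on well-definedness after the statement), but there is a real gap in the justification: you repeatedly invoke \autoref{thm:multiplication} for the product $L^2_2(X) \times L^2_1(X) \to L^2_1(X)$ in dimension $4$, and this is precisely the borderline case where that theorem fails. With $n=4$, $k=2$, $l=m=1$, $p=q=r=2$ one has $k-n/p = 0$ and $l-n/q = m-n/r = -1$; the third bullet of the theorem would require the \emph{strict} inequality $\min(0,-1) > -1$, which is false. The paper makes exactly this point in the note following the lemma: the action is \emph{not} continuous because this multiplication is not. You even flag the borderline embedding in your first paragraph, but then cite the multiplication theorem as if it applied.

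The repair the paper indicates is to use $|u|=1$ a.e.\ not merely to identify $u^{-1}=\bar u$, but to obtain $u \in L^\infty(X) \cap L^2_2(X)$; the multiplication $(L^2_2 \cap L^\infty) \times L^2_1 \to L^2_1$ \emph{is} bounded (e.g.\ $u\Phi \in L^2$ from $u\in L^\infty$, and $\nabla(u\Phi) = (\nabla u)\Phi + u\,\nabla\Phi \in L^2$ via $L^2_1 \hookrightarrow L^4$ on the first term and the $L^\infty$ bound on the second). All of your computations---well-definedness of $u^{-1}du$ and $u\Phi$, the Leibniz identities, the density arguments for $d(u^{-1}du)=0$ and the Dirac conjugation formula---go through once you systematically replace ``$u \in L^2_2$'' by ``$u \in L^2_2 \cap L^\infty$ with $\|u\|_{L^\infty}=1$''. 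Your approximation steps then also require the smooth approximants of $u$ to be $S^1$-valued (or at least uniformly bounded in $L^\infty$) for the limits to converge in $L^2_1$; this is available but should be made explicit.
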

Note that this action is not continuous
since the multiplication \(L^2_2(X) \times L^2_1(X) \to L^2_1(X)\)
is not continuous.
It is well\hyp{}defined since 
\(\Gauge(X) \subset L^\infty(X) \cap L^2_2(X)\)
and the multiplication \( (L^2_2(X) \cap L^\infty(X)) 
\times L^2_1(X) \to L^2_1(X)\)
is continuous.

In order to prove that the moduli spaces of solutions are manifolds
we need to investigate the differential of \(\SW\).
\begin{align*}
	D_{(A,\Phi)}\SW 
	&: \TangFour{X} \to \SWDomain{X}, \\
	D_{(A,\Phi)}\SW&(a,\phi) 
	= (d^+ a, \Dirac^+_{A_0} \phi)
	+ ( -\rho^{-1}(\phi \Phi^\ast + \Phi \phi^\ast)_0,
	\rho(a) \Phi + \rho(\diff{A}) \phi).
\end{align*}
Similarly, at \( (e,A,\Phi) \) the differential of the gauge group action 
is:
\begin{align*}
	T \Gauge(X) = L^2_2(X; \mathbb{R}) \to& \TangFour{X},\\
	f \mapsto& (-df, f \Phi).
\end{align*}
As in dimension \(3\),
we can fix gauge using the Coulomb condition,
i.e., require that the \(1\)\hyp{}form is coclosed.
Adding the same condition on the boundary \(\partial X\)
ensures that the restriction to the boundary
lies in the previously defined Coulomb slice
(cf. \autoref{def:Coulomb-slice}):
\begin{definition}
	[double Coulomb slice]
	\label{def:double-Coulomb-slice}
	We define the \emph{double Coulomb slice}:
	\begin{equation*}
		\Omega^1_{CC}(X) = \{ a \in \Omega^1(X) | d^\ast a = 0,
		d^\ast(\iota^\ast_{\partial X} a) = 0 \}.
	\end{equation*}
	The gauge group preserving it is called the
	\emph{harmonic gauge group}:
	\begin{equation*}
		\GaugeHarm(X) = \{ u : X \to S^1 | u^{-1}du \in \SForms[CC]{1}{X}\}.
	\end{equation*}
\end{definition}
While the action of the full gauge group \(\Gauge(X)\) is not continuous,
the action of \(\GaugeHarm(X)\) is,
which will be proven in \autoref{lem:harmonic-gauge-group-acts-continuously}.

To define the split Coulomb slice we need to first understand
the harmonic gauge group and its relation to harmonic functions
and forms on \(X\).
Notice that we have a~well\hyp{}defined homomorphism
\begin{align}
	\label{eqn:gauge-diff}
	\delta : \Gauge(X) &\longrightarrow \SForms{1}{X} \\
	\nonumber
	u &\longmapsto \delta (u) = u^{-1}du
\end{align}
which, restricted to harmonic gauge transformations, induces
\begin{equation*}
	\delta : \GaugeHarm(X) \to i\mathcal{H}^1_D(X)
\end{equation*}
where 
\begin{equation*}
	\mathcal{H}^1_D(X) = \left\{ a \in \Omega^1(X) \middle|
	da=0, d^\ast a=0, \iota^\ast_{\partial X} a = 0 \right\} 
\end{equation*}
is the space of \emph{harmonic \(1\)\hyp{}forms with Dirichlet boundary conditions}.
Note that \(\delta\) (both on \(\Gauge(X)\) and on \(\GaugeHarm(X)\))
is an~inclusion modulo \(S^1\), i.e., \(\ker \delta = S^1\),
the group of constant gauge transformations.

On the other hand, the exponential map
\begin{align*}
	\exp : L^2_2(i \Omega^0(X)) 
	&\longrightarrow \Gauge(X)
	\\
	f & \longmapsto e^f
\end{align*}
restricted to the space of \emph{doubly harmonic functions}
\begin{equation*}
	\mathcal{H}(X)
	=
	\left\{ f \in \Omega^0(X) \middle| \Delta f = 0, 
	\Delta \mleft( f|_{\partial X} \mright) = 0 \right\}
\end{equation*}
yields a~homomorphism
\begin{equation*}
	\exp : i \mathcal{H}(X) \to \GaugeHarm(X)
\end{equation*}
since the conditions \(\Delta f = 0\) and \(\Delta(f|_{\partial X})=0\)
for an~imaginary\hyp{}valued function \(f\)
are equivalent to \(df \in \SForms[CC]{1}{X} \).
Importantly, the composition
\(\delta \circ \exp : L^2_2(i \Omega^0(X)) \to \SForms{1}{X}\)
is exactly the differential \(f \mapsto df\).

Denote the image of this exponential map by
\(\GaugeHarmId(X) = \exp( i \mathcal{H}(X))\).
As the next proposition explains,
\(\GaugeHarmId(X)\) is the identity component of \(\GaugeHarm(X)\).
Thus, our goal will be to find a~gauge fixing
that dispenses with the action of this identity component,
saving only the action by \(S^1\), the constant elements.
\begin{proposition}
	[sequence of harmonic gauge groups]
	\label{prop:sequence-of-harmonic-gauge-groups}
	The following sequence is exact:
	\begin{equation}
		\label{eqn:harmonic-gauge-sequence}
		\begin{tikzcd}
			0 \arrow{r} 
			& \GaugeHarmId(X) \arrow{r} 
			& \GaugeHarm(X) \arrow{r} 
			& \pi_0\GaugeHarm(X) \arrow{r} 
			& 0.
		\end{tikzcd}
	\end{equation}

	The identity component
	\(\GaugeHarmId(X)\) is isomorphic to 
	\(S^1 \times \mathbb{R}^{b_0(\partial X)-1}\)
	and the group of components 
	\(\pi_0\GaugeHarm(X)\)
	is naturally isomorphic to \(H^1(X; \mathbb{Z})\).
\end{proposition}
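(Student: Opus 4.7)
The plan: exactness of the displayed sequence is tautologically equivalent to identifying $\GaugeHarmId(X)$ with the identity component of $\GaugeHarm(X)$, so it suffices to describe this identity component explicitly and to separately identify $\pi_0\GaugeHarm(X)$ with $H^1(X;\mathbb{Z})$.

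\emph{Computation of $\GaugeHarmId(X)$ and the identity component.} On each of the $b_0(\partial X)$ closed connected components of $\partial X$ the only harmonic functions are constants, so $f|_{\partial X}$ for $f \in \mathcal{H}(X)$ is determined by a tuple in $\mathbb{R}^{b_0(\partial X)}$; the Dirichlet problem extends each such tuple uniquely to a harmonic function on $X$, giving $\mathcal{H}(X) \cong \mathbb{R}^{b_0(\partial X)}$. The kernel of $\exp: i\mathcal{H}(X) \to \Gauge(X)$ consists of $f$ with $e^{if} \equiv 1$; these are locally $2\pi\mathbb{Z}$-valued and hence globally constant by connectedness of $X$, so $\ker\exp \cong 2\pi\mathbb{Z}$ sits as a lattice in the one-dimensional subspace of constant functions. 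Quotienting yields the connected group $\GaugeHarmId(X) \cong S^1 \times \mathbb{R}^{b_0(\partial X)-1}$. Conversely, if $u \in \GaugeHarm(X)$ is nullhomotopic in $\Gauge(X)$, I lift it to $u = e^{if}$ with $f \in L^2_2(X;\mathbb{R})$; the condition $u^{-1}du \in i\SForms[CC]{1}{X}$ then reads $\Delta f = 0$ on $X$ and $\Delta(f|_{\partial X}) = 0$ on $\partial X$, so $f \in \mathcal{H}(X)$ and $u \in \GaugeHarmId(X)$.

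\emph{Identification of $\pi_0\GaugeHarm(X)$.} The homotopy-class map $\Gauge(X) \to [X, S^1] = H^1(X;\mathbb{Z})$ is locally constant and restricts to $\GaugeHarm(X)$; by the previous paragraph it descends to an injection $\pi_0\GaugeHarm(X) \hookrightarrow H^1(X;\mathbb{Z})$. For surjectivity I use harmonic representatives with Dirichlet boundary conditions: because $b_1(\partial X) = 0$, the long exact sequence of the pair $(X,\partial X)$ gives a surjection $H^1(X,\partial X;\mathbb{R}) \twoheadrightarrow H^1(X;\mathbb{R})$, so each $\alpha \in H^1(X;\mathbb{Z})$ is represented by a Hodge form $\omega \in \mathcal{H}^1_D(X)$. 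The integrality of its periods makes $u(x) = \exp(2\pi i \int_{x_0}^x \omega)$ a well-defined smooth map $u:X \to S^1$ with $u^{-1}du = 2\pi i \omega$; the conditions $d^\ast\omega = 0$ and $\iota^\ast_{\partial X}\omega = 0$ (the latter forcing $d^\ast(\iota^\ast_{\partial X}\omega) = 0$) show $u \in \GaugeHarm(X)$, with $[u] = \alpha$ by construction. This surjectivity step is the technical heart, and it leans essentially on the hypothesis $b_1(\partial X) = 0$: without it one could not guarantee a harmonic Dirichlet representative for every integer class on $X$, and the identification of $\pi_0\GaugeHarm(X)$ with $H^1(X;\mathbb{Z})$ would fail.
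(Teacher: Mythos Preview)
Your proof is correct and follows essentially the same route as the paper: both compute $\GaugeHarmId(X)$ by identifying $\mathcal{H}(X)$ with boundary data, both realize $\pi_0\GaugeHarm(X)\to H^1(X;\mathbb{Z})$ via periods of $u^{-1}du$, and both prove surjectivity by integrating a Dirichlet harmonic representative (the paper's map $I_{x_0}$). The one stylistic difference is in showing the kernel of the period map equals $\GaugeHarmId(X)$: the paper compares images under $\delta:\GaugeHarm(X)\to i\mathcal{H}^1_D(X)$ inside a larger commutative diagram, whereas you argue more directly by lifting a period-zero $u$ to a logarithm and reading off $f\in\mathcal{H}(X)$ from the double Coulomb condition---your version is a bit cleaner. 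One small caution: invoking the homotopy-class map on all of $\Gauge(X)=L^2_2(X;S^1)$ is delicate in dimension $4$ since $L^2_2\not\hookrightarrow C^0$; it is safer (and sufficient) to define the map only on $\GaugeHarm(X)$ via periods of the smooth form $u^{-1}du\in i\mathcal{H}^1_D(X)$, as the paper does.
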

\begin{remark}
	Recall that \(H^1(X; \mathbb{Z}) \simeq \mathrm{Hom}(\pi_1(X), \mathbb{Z})\) 
	has no torsion.
\end{remark}

\begin{proof}
	Crucial to the understanding of the gauge group is the homomorphism
	\eqref{eqn:gauge-diff}.
	Hodge theory provides an~identification
	\begin{math}
		\mathcal{H}^1_D(X) \simeq H^1(X,\partial X; \mathbb{R}).
	\end{math}
	Thus, we will consider \(\delta\) as a~map
	\begin{math}
		\delta : \GaugeHarm(X) \to H^1(X, \partial X; i \mathbb{R})
	\end{math}
	with kernel \(S^1\).
	This will be used to establish a~map of horizontal short exact sequences
	\begin{equation}
		\label{eqn:gauge-groups-and-homology}
		\begin{tikzcd}
			&[-20pt] S^1 \arrow[hookrightarrow]{d}{} \arrow[equals]{r}{}
			&[-32pt] S^1 \arrow[hookrightarrow]{d}{}
			&[-20pt] 0 \arrow{d}{}
			&[-42pt] \\
			0 \arrow{r}{} 
			& \GaugeHarmId(X) \arrow[twoheadrightarrow]{d}{} \arrow[hookrightarrow]{r}{}
			& \GaugeHarm(X) \arrow{d}{\frac{\delta}{2 \pi i}} \arrow{r}{}
			& \pi_0 \GaugeHarm(X) 
			\arrow[hookrightarrow]{d}{\frac{[\delta]}{2 \pi i}} \arrow{r}{}
			& 0
			\\
			0 \arrow{r}{}
			& \quotient{H^0(\partial X; \mathbb{R})}{H^0(X; \mathbb{R})}
			\arrow{r}{} \arrow{d}{}
			& H^1(X, \partial X; \mathbb{R}) \arrow{r}{}
			\arrow[twoheadrightarrow]{d}{}
			& H^1(X; \mathbb{R}) \arrow{r}{} \arrow[twoheadrightarrow]{d}{}
			& 0
			\\
			& 0
			& \quotient{H^1(X; \mathbb{R})}{H^1(X; \mathbb{Z})}
			\arrow[equals]{r}{}
			& \quotient{H^1(X; \mathbb{R})}{H^1(X; \mathbb{Z})}
			& 
		\end{tikzcd}
	\end{equation}
	where the vertical sequences are also exact,
	as will be shown in the course of the proof.

	Firstly, we prove that \( \pi_0 \GaugeHarm(X) \simeq H^1(X; \mathbb{Z})\).
	Notice that for any closed loop 
	\(\gamma \subset X\) the period of \(\delta(u)\),
	i.e., the integral \(\int_\gamma \delta(u)\),
	is an~integer multiple of \(2 \pi i\);
	and it is zero whenever \(\gamma\) is contractible.
	(In fact it is the obstruction to lifting
	\(u|_\gamma : \gamma \to S^1\) to a~map
	\(u|_\gamma : \gamma \to \mathbb{R}\).)
	This way any \(u \in \GaugeHarm(X)\)
	determines an~element \( [u] \in H^1(X; 2 \pi i \mathbb{Z})\)
	and we get a~homomorphism
	\(\GaugeHarm(X) \to H^1(X; 2 \pi i \mathbb{Z})\).
	Since the periods (having values in \(2 \pi i \mathbb{Z}\)) 
	do not change under homotopy,
	this descends to a~map
	\(\pi_0 \GaugeHarm(X) \to H^1(X; 2 \pi i \mathbb{Z})\)
	and from its construction it follows
	that it coincides with the composition
	\begin{equation*}
		\GaugeHarm(X) \xrightarrow{\delta} 
		H^1(X, \partial X; i \mathbb{R})
		\to H^1(X; i \mathbb{R})
	\end{equation*}
	which has image in \(H^1(X; 2 \pi i \mathbb{Z})\).
	It remains to notice that any element 
	in \(x \in H^1(X; 2 \pi i \mathbb{Z})\)
	can be lifted to an~element 
	\(\widetilde{x} = H^1(X, \partial X; i \mathbb{R}) \simeq i
	\mathcal{H}^1_D(X)\)
	and then integrated along curves 
	to obtain an~element \(u_{\tilde{x}} \in \GaugeHarm(X)\)
	mapping to \(x\) (see \eqref{eqn:integrating-1-forms}).
	After dividing \(1\)\hyp{}forms by \(2\pi i\) 
	we obtain a~natural isomorphism
	\(\pi_0 \GaugeHarm(X) \simeq H^1(X; \mathbb{Z})\),
	as wished.

	Moreover, this shows that the kernel \(K\) of the map
	\( \GaugeHarm(X) \to H^1(X; 2 \pi i \mathbb{Z})\)
	maps via \(\delta\) to the kernel of
	\(H^1(X, \partial X; i \mathbb{R}) \to H^1(X; i \mathbb{R})\).
	We thus get the map
	\begin{equation*}
		\delta|_K : K \to \im(H^0(\partial X; i \mathbb{R}))
	\end{equation*}
	to the image of \(H^0( \partial X; i \mathbb{R})\) in
	\(H^1(X, \partial X; i \mathbb{R})\).
	The map \(\delta|_K\) itself has kernel \(S^1\).

	With this in mind, we turn our focus to \(\GaugeHarmId(X) =
	\exp(\mathcal{H}(X))\).
	Since any harmonic function \(f\) on \(X\) is determined
	by its restriction \(f|_{\partial X}\) to \(\partial X\),
	and harmonic functions on \(\partial X\) are locally constant,
	we have an~isomorphism
	\begin{equation*}
		H^0(\partial X; \mathbb{R})
		\simeq
		\mathcal{H}(X)
	\end{equation*}
	and we will denote any element \(g\) in \(H^0(\partial X; \mathbb{R})\)
	by \(f|_{\partial X}\) for the unique \(f \in \mathcal{H}(X)\)
	such that \(f|_{\partial X} = g\).
	We obtain a~canonically defined surjection
	\begin{align*}
		\exp : H^0(\partial X; i\mathbb{R}) &\longrightarrow \GaugeHarmId(X) \\
		f|_{\partial X} &\longmapsto e^{f}
	\end{align*}
	with kernel generated by restrictions of \(H^0(X;2\pi i \mathbb{Z})\).
	After dividing by \(2 \pi i\) we get an~isomorphism
	\(\GaugeHarmId(X) \simeq 
	H^0(\partial X; \mathbb{R})/H^0(X;\mathbb{Z})
	\simeq
	S^1 \times 
	(H^0(\partial X; \mathbb{R}) / H^0(X;\mathbb{R}))\), as wished.

	Finally, recall that the composition
	\begin{equation*}
		H^0(\partial X; i \mathbb{R})
		\simeq \mathcal{H}(X)
		\xrightarrow{\exp} 
		\GaugeHarmId(X) \xrightarrow{\delta} 
		H^1(X, \partial X; i\mathbb{R})
	\end{equation*}
	is given by \( f|_{\partial X} \mapsto df \)
	and therefore, by Hodge theory,
	represents the boundary map in the exact sequence
	\begin{equation*}
		0 \to H^0(X;\mathbb{R}) \to H^0(\partial X; \mathbb{R})
		\to H^1(X,\partial X; \mathbb{R})
		\to H^1(X; \mathbb{R})
		= 0
	\end{equation*}
	Thus \(\delta(\GaugeHarmId(X)) = \im(H^0(\partial X; i\mathbb{R}))
	= \delta(K)\).
	Since \(\ker \delta|_{\GaugeHarmId(X)} = S^1
	= \ker \delta|_{K}\)
	and \(\GaugeHarmId(X) \subseteq K\),
	we obtain that \(\GaugeHarmId(X) = K\),
	i.e., the sequence \eqref{eqn:harmonic-gauge-sequence} is exact,
	as wished.
\end{proof}

\begin{lemma}
	[\(\GaugeHarm\) acts continuously]
	\label{lem:harmonic-gauge-group-acts-continuously}
	The action of \(\GaugeHarm(X)\) on \(\ConfFour{X}\) is smooth.
\end{lemma}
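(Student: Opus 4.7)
The plan is to combine the structural description of $\GaugeHarm(X)$ from \autoref{prop:sequence-of-harmonic-gauge-groups} with elliptic regularity, reducing smoothness of the action to continuity of multiplication by smooth functions on $L^2_1$ configurations. The point is that, although the full action $L^2_2(X;S^1) \times L^2_1 \to L^2_1$ fails to be continuous, the harmonic gauge group is much smaller: its elements are smooth and depend on only finitely many real parameters locally.

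First I would show that every $u \in \GaugeHarm(X)$ is actually smooth. By definition $\alpha := u^{-1} du \in i\mathcal{H}^1_D(X)$ is a harmonic $1$-form with Dirichlet boundary conditions on a compact manifold with smooth boundary, hence smooth by elliptic regularity. Since $\Gauge(X) = L^2_2(X;S^1) \subset C^0(X;S^1)$ in dimension four by Sobolev embedding, one can locally write $u = e^{i\theta}$ with $\theta$ continuous and $d\theta = -i\alpha$ smooth; the standard bootstrap then gives $u \in C^\infty(X;S^1)$, and consequently $u^{-1}du$ is smooth as well. Next, by \autoref{prop:sequence-of-harmonic-gauge-groups}, $\GaugeHarm(X)$ is the disjoint union of cosets of the identity component $\GaugeHarmId(X) = \exp(i\mathcal{H}(X))$ with $\mathcal{H}(X)$ finite-dimensional. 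For each component $[c] \in \pi_0 \GaugeHarm(X) \simeq H^1(X;\mathbb{Z})$ I would fix a smooth representative $u_{[c]}$, obtained in the proof of the proposition by integrating a smooth harmonic $1$-form representative; then any $u$ in that component admits a unique expression $u = u_{[c]}\, e^{f}$ with $f \in i\mathcal{H}(X)$. Because $\mathcal{H}(X)$ is a finite-dimensional space of smooth functions, the assignments $f \mapsto u_{[c]} e^f$ and $f \mapsto u^{-1}du = u_{[c]}^{-1}du_{[c]} + df$ are real-analytic into every $C^k(X;S^1)$ and $C^k(X; i\Lambda^1)$ respectively.

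Combining these, the action $(u,(A,\Phi)) \mapsto (A - u^{-1}du,\, u\Phi)$ factors through the smooth finite-dimensional parametrization $u \mapsto (u,\, u^{-1}du)$ with values in $C^k(X;S^1) \times C^k(X; i\Lambda^1)$ for every $k$, composed with the pointwise action. Smoothness of the latter as a map into $\ConfFour{X}$ reduces to continuity of the bilinear multiplication $C^k(X) \times L^2_1(X;S^+) \to L^2_1(X;S^+)$ and translation in the affine $L^2_1$-space of connections by $C^k$ $1$-forms, both valid for $k \geq 1$. The only conceptual obstacle is the regularity upgrade for $u$ in the second paragraph; once $u$ is known to be smooth, the finite-dimensionality of the harmonic data circumvents the discontinuity of the full multiplication $L^2_2 \times L^2_1 \to L^2_1$ that obstructed continuity of the full gauge action.
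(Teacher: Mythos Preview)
Your proposal is correct and follows essentially the same strategy as the paper: reduce via \autoref{prop:sequence-of-harmonic-gauge-groups} to the finite-dimensional parametrization by $i\mathcal{H}(X)$, then use that multiplication by such highly regular functions on $L^2_1$ is smooth. The paper's version is more terse, invoking the Sobolev multiplication $L^2_3 \times L^2_1 \to L^2_1$ from \autoref{thm:multiplication} rather than passing through $C^k$, and it skips the explicit smoothness argument for individual $u$, but the substance is the same.
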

\begin{proof}
	By \autoref{prop:sequence-of-harmonic-gauge-groups}
	it suffices prove that the action of
	\(\GaugeHarmId(X)\) is smooth.
	Further, by \autoref{thm:multiplication}
	it suffices to prove that \(\GaugeHarmId(X) \subset L^2_3(X; S^1)\)
	and that this injection is continuous.
	Since \(f \in i \mathcal{H}(X)\) and \(\mathcal{H}(X)\) 
	is finite\hyp{}dimensional,
	there are constants \(C, C'\)
	such that
	\(\|\exp(f)\|_{L^2_3} \leq C \|f\|_{L^2_3} \leq CC' \|f\|_{L^2_2}\),
	finishing the proof.
\end{proof}

Let us compare different splittings.
If \(s, s'\) are two different splittings,
then for any \([u] \in \pi_0 \GaugeHarm(X)\) we have 
\(s (s')^{-1} \in \GaugeHarmId(X)\).
Therefore any two gauge splittings differ by a~homomorphism
\(\pi_0 \GaugeHarm(X) \to \GaugeHarmId(X)\).

Our goal is to reduce the gauge group action
to the action of \(S^1\) and the action of a~chosen
lift of \(\pi_0\GaugeHarm(X)\) to \(\GaugeHarm(X)\).
Precisely, we will consider splittings
\(s : \pi_0\GaugeHarm(X) \to \GaugeHarm(X)\)
of \eqref{eqn:harmonic-gauge-sequence}.
In order to choose the gauge fixing we need to understand
that a~gauge splitting induces another splitting on the level of homology.
\begin{proposition}
	[homological splitting]
	\label{prop:homological-splitting}
	Any splitting \(s : \pi_0\GaugeHarm(X) \to \GaugeHarm(X)\)
	of \eqref{eqn:harmonic-gauge-sequence}
	induces a~splitting 
	\(s^H : H^1(X;\mathbb{R}) \to H^1(X,\partial X;\mathbb{R})\)
	of the exact sequence
	\begin{equation}
		0 \to
		H^0(\partial X; \mathbb{R}) / H^0(X;\mathbb{R})
		\to
		H^1(X,\partial X;\mathbb{R})
		\to
		H^1(X;\mathbb{R})
		\to
		0.
		\label{eqn:relative-homology-exact-sequence}
	\end{equation}
\end{proposition}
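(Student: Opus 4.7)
The plan is to construct $s^H$ by first defining it on the integer lattice and then extending $\mathbb{R}$-linearly, exploiting the commutativity of the diagram \eqref{eqn:gauge-groups-and-homology}. Concretely, I would compose the given splitting $s : \pi_0\GaugeHarm(X) \to \GaugeHarm(X)$ with the middle vertical map $\delta/(2\pi i) : \GaugeHarm(X) \to H^1(X,\partial X;\mathbb{R})$ from that diagram to produce a homomorphism
\[ \pi_0\GaugeHarm(X) \longrightarrow H^1(X,\partial X;\mathbb{R}). \]
Using the already established isomorphism $\pi_0\GaugeHarm(X) \simeq H^1(X;\mathbb{Z})$ (\autoref{prop:sequence-of-harmonic-gauge-groups}), this becomes a $\mathbb{Z}$-linear map $H^1(X;\mathbb{Z}) \to H^1(X,\partial X;\mathbb{R})$.

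Next I would extend this to an $\mathbb{R}$-linear map $s^H : H^1(X;\mathbb{R}) \to H^1(X,\partial X;\mathbb{R})$. This uses the fact, already noted in the remark, that $H^1(X;\mathbb{Z})$ is torsion-free, so it is a finitely generated free abelian group sitting as a full-rank lattice inside $H^1(X;\mathbb{R}) = H^1(X;\mathbb{Z}) \otimes_{\mathbb{Z}} \mathbb{R}$. The unique $\mathbb{R}$-linear extension then exists and depends only on $s$.

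To verify that $s^H$ is a splitting of \eqref{eqn:relative-homology-exact-sequence}, I would compute the composition $H^1(X;\mathbb{R}) \xrightarrow{s^H} H^1(X,\partial X;\mathbb{R}) \to H^1(X;\mathbb{R})$ on the integer lattice and then invoke $\mathbb{R}$-linearity. For $x \in \pi_0\GaugeHarm(X) \simeq H^1(X;\mathbb{Z})$, chasing through the commutative diagram \eqref{eqn:gauge-groups-and-homology}: $s(x)$ lifts $x$ to $\GaugeHarm(X)$, so the right square says that the image of $\delta(s(x))/(2\pi i) \in H^1(X,\partial X;\mathbb{R})$ under the projection to $H^1(X;\mathbb{R})$ coincides with the image of $x$ under the already identified inclusion $\pi_0\GaugeHarm(X) \simeq H^1(X;\mathbb{Z}) \hookrightarrow H^1(X;\mathbb{R})$, i.e.\ simply $x$ viewed in $H^1(X;\mathbb{R})$. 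Extending $\mathbb{R}$-linearly shows that the composition is the identity on all of $H^1(X;\mathbb{R})$.

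The whole argument is essentially diagram-chasing; the only real content beyond bookkeeping is the torsion-freeness of $H^1(X;\mathbb{Z})$ needed to extend from the lattice to the real vector space, which is already noted in the text. Thus I do not expect a significant technical obstacle — the main thing to be careful about is keeping track of the $2\pi i$ normalizations and making sure the relevant triangle from diagram \eqref{eqn:gauge-groups-and-homology} commutes on the nose, so that $s^H$ really lands in a lift of the identity rather than a multiple of it.
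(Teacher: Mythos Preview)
Your proposal is correct and follows essentially the same approach as the paper: compose $H^1(X;\mathbb{Z}) \simeq \pi_0\GaugeHarm(X) \xrightarrow{s} \GaugeHarm(X) \xrightarrow{\delta/(2\pi i)} H^1(X,\partial X;\mathbb{R})$ and extend $\mathbb{R}$-linearly. In fact you are more thorough than the paper, which states the construction in a single sentence and does not spell out the verification that the resulting map is a section.
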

\begin{proof}
	Composing
	\(H^1(X; \mathbb{Z}) \xrightarrow{\cdot 2 \pi i} 
	\pi_0 \GaugeHarm(X) \xrightarrow{s} \GaugeHarm(X)
	\xrightarrow{\frac{\delta}{2 \pi i}} H^1(X, \partial X; \mathbb{R})\)
	we get a~linear map which by linearity uniquely extends to a~section
	\(s_H: H^1(X; \mathbb{R}) \to H^1(X, \partial X; \mathbb{R})\)
	of the aforementioned exact sequence.
\end{proof}

\begin{definition}
	[gauge splitting]
	\label{def:gauge-splitting}
	By a~\emph{gauge splitting} we call a~splitting
	\({s : \pi_0\GaugeHarm(X) \to \GaugeHarm(X)}\)
	of the exact sequence \eqref{eqn:harmonic-gauge-sequence}.
	We denote by
	\({s^H : H^1(X;\mathbb{R}) \to H^1(X,\partial X;\mathbb{R})}\)
	the associated \emph{homological splitting}.
\end{definition}

We clarify the relationship between gauge splittings and homological splittings.
\begin{proposition}
	[gauge splittings from homological splittings]
	\label{prop:gauge-splittings-from-homological-splittings}
	\hfill \\
	Let \(\sigma : H^1(X; \mathbb{R}) \to H^1(X, \partial X; \mathbb{R})\)
	be a~homological splitting, i.e., a~splitting of
	\eqref{eqn:relative-homology-exact-sequence}.
	Then up to action of \(S^1\) there exists a~unique gauge splitting \(s\)
	such that \(\sigma = s^H\).
\end{proposition}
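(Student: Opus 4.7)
The plan is to construct the gauge splitting $s$ directly from $\sigma$ by the integration\hyp{}exponentiation procedure appearing in the proof of \autoref{prop:sequence-of-harmonic-gauge-groups}, verify the homomorphism property using that $\GaugeHarm(X)$ is abelian, and identify the precise ambiguity via $\ker \delta \cap \GaugeHarmId(X) = S^1$. The key inputs are the Hodge identification $\mathcal{H}^1_D(X) \simeq H^1(X, \partial X; \mathbb{R})$ and the inclusion $\mathcal{H}^1_D(X) \subset \Omega^1_{CC}(X)$ (since $\iota^\ast_{\partial X} a = 0$ trivially implies $d^\ast(\iota^\ast_{\partial X} a) = 0$).

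For existence, given $x \in \pi_0\GaugeHarm(X) \simeq H^1(X;\mathbb{Z})$, I take $\eta_x \in \mathcal{H}^1_D(X)$ to be the unique harmonic representative of $\sigma(x) \in H^1(X, \partial X; \mathbb{R})$. Because $\sigma$ splits \eqref{eqn:relative-homology-exact-sequence}, the image of $\sigma(x)$ in $H^1(X;\mathbb{R})$ equals $x$, so the periods $\int_\gamma \eta_x$ for $[\gamma] \in H_1(X;\mathbb{Z})$ are integers. Hence $2\pi i \eta_x$ admits an exponentiated path integral $u_x:X \to S^1$ with $u_x^{-1} du_x = 2\pi i \eta_x$, unique up to a multiplicative constant in $S^1$; the Dirichlet condition $\iota^\ast_{\partial X}\eta_x = 0$ together with $d^\ast \eta_x = 0$ places $\delta(u_x) \in \Omega^1_{CC}(X)$, so $u_x \in \GaugeHarm(X)$, and $u_x$ projects to $x$ in $\pi_0\GaugeHarm(X)$ by the construction of the isomorphism $\pi_0\GaugeHarm(X) \simeq H^1(X;\mathbb{Z})$. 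Using torsion\hyp{}freeness of $H^1(X;\mathbb{Z})$, I pick a $\mathbb{Z}$\hyp{}basis $x_1,\ldots,x_n$, fix lifts $s(x_i) := u_{x_i}$, and extend by the abelian law $s\left(\sum_i a_i x_i\right) := \prod_i s(x_i)^{a_i}$. Since $\eta_x$ depends $\mathbb{R}$\hyp{}linearly on the class, this yields $s^H = \sigma$ on generators and hence on all of $H^1(X;\mathbb{R})$.

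For uniqueness, if $s_1, s_2$ are two splittings with $s_1^H = s_2^H = \sigma$, then $c(x) := s_1(x)\, s_2(x)^{-1}$ lies in $\GaugeHarmId(X)$ (same $\pi_0$\hyp{}class) and satisfies $\delta(c(x)) = 2\pi i(\sigma(x) - \sigma(x)) = 0$. The identification $\ker(\delta|_{\GaugeHarmId(X)}) = S^1$ recorded in the proof of \autoref{prop:sequence-of-harmonic-gauge-groups} then shows $c:\pi_0\GaugeHarm(X) \to S^1$ is a homomorphism, which is precisely the $S^1$\hyp{}valued ambiguity in the statement. The only step requiring care is well\hyp{}definedness of the integration producing $u_x$, which reduces exactly to integrality of the periods, and this is built into $\sigma$ being a splitting of the integer\hyp{}valued Mayer\hyp{}Vietoris\hyp{}type sequence; the $L^2_2$\hyp{}regularity demanded by $\GaugeHarm(X)$ is automatic since $\eta_x$ is smooth by elliptic regularity of the Hodge Laplacian, so $u_x \in C^\infty(X;S^1)$.
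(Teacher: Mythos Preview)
Your proof is correct and follows essentially the same route as the paper: integrate the harmonic Dirichlet representative of \(\sigma(x)\) along paths and exponentiate to produce the lift, then read off the ambiguity from \(\ker\delta = S^1\). The only cosmetic difference is that the paper fixes a basepoint \(x_0\in X\) and uses the map \(I_{x_0}\) of \eqref{eqn:integrating-1-forms}, which is automatically additive in \(\eta\) and so yields a homomorphism without having to choose a \(\mathbb{Z}\)\hyp{}basis of \(H^1(X;\mathbb{Z})\) and extend by hand; your invocation of a ``Mayer\hyp{}Vietoris\hyp{}type sequence'' should simply read ``long exact sequence of the pair \((X,\partial X)\)''.
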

\begin{proof}
	For existence, choose \(x_0 \in X\)
	and consider the map
	\begin{align}
		\label{eqn:integrating-1-forms}
		I_{x_0} : \left\{ \eta \in \mathcal{H}^1_D(X)\right.%
		&\left|\, [\eta] \in H^1(X; 2 \pi i \mathbb{Z}) \right\}
		\longrightarrow \GaugeHarm(X),
		\\
		\nonumber
		\eta &\longmapsto \left( I_{x_0}(\eta)(x) = 
		\exp\left(\int_{x_0}^x \eta \right) \right).
	\end{align}
	Notice that \(\delta(I_{x_0}(\eta)) = \eta\).
	Therefore taking \(s([u]) = I_{x_0}(\sigma([\delta(u)]))\)
	we get a~gauge splitting \(s: \pi_0 \GaugeHarm(X) \to \GaugeHarm(X)\)
	with \(\sigma = s^H\).

	The uniqueness up to action of \(S^1\) follows from
	the exactness of the middle vertical sequence in 
	\eqref{eqn:gauge-groups-and-homology}.
\end{proof}

In order to find the appropriate gauge fixing
we need the following analogue of \autoref{prop:sequence-of-harmonic-gauge-groups}
for \(1\)\hyp{}forms.
\begin{lemma}
	[decomposing \(1\)\hyp{}forms]
	\label{lem:decomposing-1-forms}
	\hfill \\
	\(\Omega^1(X) = \Omega^1_{CC}(X) + d(\Omega^0(X))\)
	and \(\Omega^1_{CC}(X) \cap d(\Omega^0(X)) = d(\mathcal{H}(X))\).
\end{lemma}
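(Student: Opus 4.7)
The plan is to reduce both statements to standard elliptic boundary value problems. For the decomposition, given $a \in \Omega^1(X)$ I would look for $f \in \Omega^0(X)$ such that $a - df \in \Omega^1_{CC}(X)$. Unpacking the definitions, this amounts to the two equations
\begin{equation*}
	\Delta f = d^\ast a \text{ on } X,
	\qquad
	\Delta_{\partial X}\bigl(f|_{\partial X}\bigr) = d^\ast\bigl(\iota^\ast_{\partial X} a\bigr) \text{ on } \partial X,
\end{equation*}
using $\iota^\ast_{\partial X}(df) = d(f|_{\partial X})$.

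The strategy is to solve these in two stages. First, on the closed manifold $\partial X$ I would solve the Poisson equation $\Delta_{\partial X} h = d^\ast(\iota^\ast_{\partial X} a)$; this is solvable component by component because $d^\ast$ applied to a smooth $1$-form integrates to zero over any closed manifold, and $h$ is unique up to a locally constant function. Then I would feed $h$ as Dirichlet data into the Poisson problem $\Delta f = d^\ast a$ on $X$ with $f|_{\partial X} = h$, which has a unique solution by the solvability of the Dirichlet problem for the Laplacian on a compact manifold with smooth boundary. The resulting $f$ satisfies both equations by construction, so $a = (a - df) + df$ lies in $\Omega^1_{CC}(X) + d(\Omega^0(X))$, proving the first assertion.

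For the intersection statement, the calculation is direct. If $f \in \Omega^0(X)$ with $df \in \Omega^1_{CC}(X)$, then $0 = d^\ast df = \Delta f$ on $X$, and $0 = d^\ast(\iota^\ast_{\partial X} df) = d^\ast d(f|_{\partial X}) = \Delta_{\partial X}(f|_{\partial X})$ on $\partial X$, so $f \in \mathcal{H}(X)$ and $df \in d(\mathcal{H}(X))$. Conversely, if $f \in \mathcal{H}(X)$ then exactly the same identities run backwards to give $df \in \Omega^1_{CC}(X)$, and the inclusion $d(\mathcal{H}(X)) \subset \Omega^1_{CC}(X) \cap d(\Omega^0(X))$ is immediate.

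The only genuinely analytic input is the sequential solvability of the two scalar elliptic problems, both of which are classical. The mild subtlety is ensuring the compatibility condition for the Poisson equation on $\partial X$ (zero average on each component), which follows from Stokes' theorem applied to $d^\ast(\iota^\ast_{\partial X} a)$ on the closed manifold $\partial X$; this is the only place where the argument could hide an obstruction, and it vanishes automatically.
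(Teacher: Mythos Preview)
Your argument is correct. The paper itself does not give a proof but defers to \cite{Kha2015}*{Proposition 2.2}; your two-step approach---first solving the Poisson equation on the closed boundary (using that \(d^\ast\) of a \(1\)-form has zero mean on each component), then feeding the result as Dirichlet data into the Laplace equation on \(X\)---is precisely the standard route and matches what that reference does.
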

\begin{proof}
	This follows from the proof of \cite{Kha2015}*{Proposition 2.2}.
	(Note that our definition of \(\Omega^1_{CC}(X)\) 
	differs from Khandhawit's, 
	which we denote by \(\Omega^1_{s^\perp}(X)\) (cf.
	\autoref{def:orthogonal-splitting}).)
\end{proof}

In particular, we can decompose \(1\)\hyp{}forms as
\begin{equation*}
	\Omega^1(X) = \Omega^1_{CC}(X) \oplus d(\Omega^0_\partial(X))
\end{equation*}
where
\begin{equation*}
	\Omega^0_\partial(X) = 
	\left\{ f \in \Omega^0(X) \middle| \int_{Y_i} f = 0 \text{ for each
	component }Y_i \subset \partial X \right\}.
\end{equation*}
Denoting by \(\proj_{CC}\) the projection onto 
\(\Omega^1_{CC}(X)\)
along 
\(d(\Omega^0_\partial(X))\) 
we obtain the following analog of \autoref{lem:gauge-fixing-in-3-d}.
\begin{lemma}
	[Coulomb gauge fixing in \(4\)d]
	\label{lem:Coulomb-slice-fixing-in-4d}
	\hfill\\
	There is a~unique homomorphism
	\begin{align*}
		\SForms{1}{X} 
		& \to \Gauge^{e,\partial}(X) 
		= \exp\mleft( L^2_2\mleft(i \Omega^0_\partial(X)\mright)\mright)
		\\
		a & \mapsto u^{CC}_a
	\end{align*}
	such that 
	\begin{equation*}
		a - \left(u^{CC}_a\right)^{-1} d u^{CC}_a = \Pi_{CC} a 
		\in \SForms[CC]{1}{X}.
	\end{equation*}
\end{lemma}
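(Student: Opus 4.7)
The plan is to mimic the argument used for \autoref{lem:gauge-fixing-in-3-d}, substituting $\Omega^0_\partial(X)$ for $\Omega^0_0(Y)$ and the double Coulomb decomposition for the ordinary Hodge decomposition. First I would upgrade \autoref{lem:decomposing-1-forms} to the direct sum decomposition $\Omega^1(X) = \Omega^1_{CC}(X) \oplus d(\Omega^0_\partial(X))$. Given $a = b + df$ with $b \in \Omega^1_{CC}(X)$ and $f \in \Omega^0(X)$ from the lemma, one can decompose $f = f' + h$ with $h \in \mathcal{H}(X)$ arranged so that $\int_{Y_i}(f-h) = 0$ for every boundary component; this is possible because harmonic functions on $X$ are determined by their (locally constant) boundary values, yielding $\mathcal{H}(X) \simeq H^0(\partial X;\mathbb{R})$ (as in the proof of \autoref{prop:sequence-of-harmonic-gauge-groups}). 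Absorbing $dh$ into $b$ puts $a$ in $\Omega^1_{CC}(X) + d(\Omega^0_\partial(X))$. Directness follows because \autoref{lem:decomposing-1-forms} identifies the intersection with $d(\mathcal{H}(X))$, and if $df = dh$ for $f \in \Omega^0_\partial(X)$ and $h \in \mathcal{H}(X)$, then $f - h$ is constant on the connected manifold $X$, so the boundary-average conditions force $f = h = 0$.

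Second, the decomposition extends to Sobolev spaces in the form $\SForms{1}{X} = \SForms[CC]{1}{X} \oplus d\bigl(L^2_2(\Omega^0_\partial(X))\bigr)$ by elliptic regularity for $d^\ast d$ with the boundary-average constraint, giving a bounded projection $\Pi_{CC}$ and a bounded right inverse $G_d : d(L^2_2(\Omega^0_\partial(X))) \to L^2_2(\Omega^0_\partial(X))$ to $d$. I would then define
\begin{equation*}
	u^{CC}_a = \exp\bigl(G_d(a - \Pi_{CC} a)\bigr),
\end{equation*}
which takes values in $\Gauge^{e,\partial}(X)$ by construction. A direct computation gives $\bigl(u^{CC}_a\bigr)^{-1} du^{CC}_a = d G_d(a - \Pi_{CC}a) = a - \Pi_{CC}a$, as required. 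The homomorphism property is immediate from linearity of $G_d$ and $\Pi_{CC}$ combined with commutativity of $\Gauge^{e,\partial}(X)$.

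For uniqueness, if $u = e^f$ with $f \in L^2_2(i\Omega^0_\partial(X))$ satisfies the defining identity, then $df = a - \Pi_{CC}a$, so $f - G_d(a - \Pi_{CC}a) \in \ker d \cap L^2_2(i\Omega^0_\partial(X))$. Any element of this kernel is constant on $X$ (since $X$ is connected), and the vanishing of its integral over each component $Y_i \subset \partial X$ forces it to be zero. Hence $f$, and therefore $u^{CC}_a$, is uniquely determined. The only non-routine step is verifying the $L^2_1$-level direct-sum decomposition together with boundedness of $G_d$ into $L^2_2$, but these are standard applications of elliptic theory for the Laplacian on $X$ with the auxiliary boundary-average linear functionals included.
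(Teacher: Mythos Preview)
Your approach is correct and essentially the same as the paper's: both use the direct sum decomposition $\Omega^1(X) = \Omega^1_{CC}(X) \oplus d(\Omega^0_\partial(X))$ together with injectivity of $d$ on $\Omega^0_\partial(X)$ to define $u^{CC}_a = \exp(f)$ for the unique $f \in L^2_2(i\Omega^0_\partial(X))$ with $df = (1-\Pi_{CC})a$. Note that the direct sum decomposition you spend your first paragraph establishing is already recorded in the paper immediately before the lemma, so that step is redundant; your proof is otherwise a more detailed version of the paper's three-sentence argument.
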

\begin{proof}
	The projection \( (1-\Pi_{CC})\) on \(\Omega^1(X)\)
	has image in \(d( \Omega^0_\partial(X))\)
	and that \(d\) is injective on \(\Omega^0_\partial(X)\).
	Therefore there is a~unique homomorphism
	\(L^2_1(i \Omega^1(X)) \to L^2_2(i \Omega^0_\partial(X))\)
	sending \(a\) to the unique \(f_a^{CC} \in \Omega^0_\partial(X)\)
	such that \(a - df_a^{CC} \in \Omega^1(X)\).
	Then we take \(u_a^{CC} = \exp(f_a^{CC})\).
\end{proof}

We can further decompose
\begin{equation}
	\label{eqn:decomposing-CCs}
	\Omega^1_{CC}(X) = 
	\mleft(\Omega^1_{CC}(X) \cap (\mathcal{H}^1_D(X))^\perp\mright)
	\oplus \mathcal{H}^1_D(X).
\end{equation}
A~homological splitting \(s^H\) provides a~decomposition
\begin{equation}
	\label{eqn:decomposing-harmonics}
	\mathcal{H}^1_D(X) = s^H(H^1(X;\mathbb{R})) \oplus d(\mathcal{H}(X))
\end{equation}
which is an~analogue of 
\eqref{eqn:harmonic-gauge-sequence}
for \(\mathcal{H}^1_D(X)\).
With these in hand, we are ready to define the split Coulomb slice.
\begin{definition}
	[split Coulomb slice]
	\label{def:split-Coulomb-slice}
	Let \(s\) be a~gauge splitting and \(s^H\) its associated
	homological splitting.
	The \emph{split Coulomb slice} is
	\begin{equation*}
		\Omega^1_{s}(X) = \{ a \in \Omega^1_{CC}(X) |
			a \in (\Omega^1_{CC}(X) \cap (\mathcal{H}^1_D(X))^\perp)
			\oplus s^H(H^1(X;i\mathbb{R})) \}.
	\end{equation*}
\end{definition}
In particular, we have that
\begin{equation}
	\label{eqn:CC-exact-decomposition}
	\Omega^1(X) = \Omega^1_{s}(X) \oplus d(\Omega^0(X))
\end{equation}
and parallel to \autoref{lem:Coulomb-slice-fixing-in-4d}
and \autoref{lem:gauge-fixing-in-3-d}
we can use the projection \(\proj_s\) onto the first factor 
along the second one to obtain:
\begin{lemma}
	[split gauge fixing in 4d]
	\label{lem:split-gauge-fixing-in-4d}
	There is a~unique homomorphism
	\begin{align*}
		\SForms{1}{X} 
		& \to \GaugeIdBased(X) \\
		a & \mapsto u^s_a
	\end{align*}
	such that 
	\begin{equation*}
		a - \left(u^s_a\right)^{-1} d u^s_a = \proj_s a 
		\in \SForms[s]{1}{X}.
	\end{equation*}
\end{lemma}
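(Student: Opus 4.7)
The plan is to follow the template established by \autoref{lem:gauge-fixing-in-3-d} and \autoref{lem:Coulomb-slice-fixing-in-4d}. I would write the desired gauge transformation in the exponential form \(u^s_a = \exp(f^s_a)\) for some imaginary-valued function \(f^s_a\); the condition \(a - (u^s_a)^{-1} d u^s_a = \proj_s a\) is then equivalent to the linear equation \(d f^s_a = (1-\proj_s)a\). The lemma therefore reduces to constructing a canonical bounded linear right-inverse of \(d\) defined on the range of \(1 - \proj_s\) and taking values in the subspace of \(L^2_2(i\Omega^0(X))\) whose exponential is \(\GaugeIdBased(X)\).

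First I would invoke the direct sum decomposition \eqref{eqn:CC-exact-decomposition}, which guarantees that \((1-\proj_s)a \in d(L^2_2(i\Omega^0(X)))\), so that a solution \(f^s_a\) exists. Since \(X\) is connected, \(\ker d|_{L^2_2(\Omega^0(X))} = \mathbb{R}\), so \(f^s_a\) is determined only up to a constant; to pin down a canonical representative, I would split off both the constants and the harmonic summand of \(\Omega^0(X)\), building on the decomposition \(\Omega^0(X) = \mathcal{H}(X) \oplus \Omega^0_\partial(X)\) implicit in the proof of \autoref{lem:Coulomb-slice-fixing-in-4d}. Picking a complement of \(\mathbb{R}\) inside \(\mathcal{H}(X)\), this produces a subspace \(\Omega^0_\bullet(X)\) on which \(d\) is injective, and whose image under \(d\) is all of \(d(\Omega^0(X))\); the inverse of the restricted \(d\) is then the desired bounded Green's-type operator \(G^s\).

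The candidate homomorphism is then the composition \(a \mapsto f^s_a := G^s((1-\proj_s)a) \mapsto u^s_a := \exp(f^s_a)\). Continuity (indeed smoothness) into \(\GaugeIdBased(X) \subset L^2_3(X;S^1)\) follows by the same argument as in \autoref{lem:harmonic-gauge-group-acts-continuously} together with the boundedness of the linear factors. Uniqueness is automatic: any other such \(u\) satisfies \(u^{-1} u^s_a = \exp(g)\) for some \(g \in \Omega^0_\bullet(X)\) with \(dg = 0\), forcing \(g = 0\) by the chosen complement.

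The step I expect to be the most delicate is verifying that \(\proj_s\) extends to a bounded projection at the relevant Sobolev level \(L^2_1(i\Omega^1(X))\), rather than merely on smooth forms. This should follow from the fact that \(\proj_s\) differs from the known bounded projection \(\Pi_{CC}\) only by the projection onto the finite-dimensional piece \(d(\mathcal{H}(X)) \subset \mathcal{H}^1_D(X)\), which is manifestly bounded. Once this boundedness is secured the rest of the argument is essentially formal.
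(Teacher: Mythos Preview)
Your proposal is correct and follows the same template as the paper: write \(u^s_a = \exp(f^s_a)\), reduce to the linear equation \(df^s_a = (1-\proj_s)a\), and invert \(d\) on a complement of the constants. The paper's execution is more direct, however: rather than building \(\Omega^0_\bullet(X)\) via the decomposition \(\Omega^0(X) = \mathcal{H}(X) \oplus \Omega^0_\partial(X)\) and then choosing a complement of \(\mathbb{R}\) inside \(\mathcal{H}(X)\), the paper simply takes \(\Omega^0_0(X) = \{f \mid \int_X f = 0\}\) as the complement of constants (this is exactly the subspace whose exponential is \(\GaugeIdBased(X)\)), notes that \(d\) is injective there, and is done. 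Your detour through \(\mathcal{H}(X)\) is unnecessary and, unless you explicitly choose the complement so that \(\Omega^0_\bullet(X) = \Omega^0_0(X)\), you would not automatically land in \(\GaugeIdBased(X)\) as stated. The boundedness and continuity concerns you raise are legitimate but the paper does not address them explicitly in its (very terse) proof.
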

\begin{proof}
	The projection \( (1-\Pi_s)\) on \(\Omega^1(X)\)
	has image in \(d( \Omega^0(X))\)
	and \(d\) is injective on \(\Omega^0_0(X)\).
	Therefore there is a~unique homomorphism
	\(L^2_1(i \Omega^1(X)) \to L^2_2(i \Omega^0_0(X))\)
	sending \(a\) to the unique \(f_a^{s} \in \Omega^0_0(X)\)
	such that \(a - df_a^s \in \Omega^1(X)\).
	Then we take \(u_a^s = \exp(f_a^s)\).
\end{proof}
\begin{remark}
	[continuous gauge fixing within double Coulomb slice]
	\label{rmk:continuous-gauge-fixing-within-double-Coulomb-slice}
	If we only consider \(a \in \Omega^1_{CC}(X)\), then
	the above map has image in \(\GaugeHarm(X)\), which is finite\hyp{}dimensional.
	Since the latter gauge group acts continuously on the configuration space
	by \autoref{lem:harmonic-gauge-group-acts-continuously},
	we conclude that putting \( (A,\Phi) \in \CoulFour{CC}{X}\)
	into split Coulomb slice \(\CoulFour{s}{X}\) can be done continuously
	with respect to \((A,\Phi)\).
\end{remark}

The gauge group acting on this split Coulomb slice is
the product of \(S^1\) and the split gauge group:
\begin{definition}
	[split harmonic gauge group]
	\label{def:gauge-group-split}
	Let \(s\) be a~gauge splitting.
	The \emph{split gauge group} is defined to be
	\begin{equation*}
		\GaugeSplit[s](X) = s(\pi_0\GaugeHarm(X)).
	\end{equation*}
\end{definition}
\begin{lemma}
	[split gauge group preserves the split Coulomb slice]
	\label{lem:split-gauge-compatible}
	For \(u \in \GaugeSplit[s](X)\)
	we have \(u^{-1}du \in \SForms[s]{1}{X}\).

	Conversely, if \(u^{-1} du \in \SForms[s]{1}{X}\),
	then for some \(z \in S^1\) we have
	\(zu \in \GaugeSplit[s](X)\).
\end{lemma}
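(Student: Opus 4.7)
My plan is to work everything through the homomorphism \(\delta : \GaugeHarm(X) \to i\mathcal{H}^1_D(X)\) from \autoref{prop:sequence-of-harmonic-gauge-groups}, together with the decomposition \(\mathcal{H}^1_D(X) = s^H(H^1(X;\mathbb{R})) \oplus d(\mathcal{H}(X))\) from \eqref{eqn:decomposing-harmonics} and the definition of the split Coulomb slice. The observation I would use repeatedly is that \(\SForms[s]{1}{X} \cap \mathcal{H}^1_D(X) = s^H(H^1(X; i\mathbb{R}))\), which follows from the fact that the \(\SForms[CC]{1}{X} \cap (\mathcal{H}^1_D(X))^\perp\) summand in the definition of \(\SForms[s]{1}{X}\) meets \(\mathcal{H}^1_D(X)\) trivially.

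For the forward direction, take \(u \in \GaugeSplit[s](X) = s(\pi_0 \GaugeHarm(X))\). Then \(u \in \GaugeHarm(X)\), so \(\delta(u) = u^{-1}du\) lies in \(i\mathcal{H}^1_D(X)\). By the construction of \(s^H\) in \autoref{prop:homological-splitting}, one has \(\delta(u)/(2\pi i) = s^H\bigl([\delta(u)]/(2\pi i)\bigr) \in s^H(H^1(X;\mathbb{R}))\), so \(\delta(u) \in s^H(H^1(X;i\mathbb{R})) \subseteq \SForms[s]{1}{X}\).

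For the converse, suppose \(u^{-1}du \in \SForms[s]{1}{X} \subseteq \SForms[CC]{1}{X}\). Then \(u \in \GaugeHarm(X)\), and \(u \cdot s([u])^{-1} \in \GaugeHarmId(X) = \exp(i\mathcal{H}(X))\), so I can write \(u = e^f\, s([u])\) for some \(f \in i\mathcal{H}(X)\). Since \(\GaugeHarm(X)\) is abelian, \(\delta\) is a homomorphism into imaginary \(1\)\hyp{}forms, giving \(df = \delta(u) - \delta(s([u]))\). The right\hyp{}hand side lies in \(\SForms[s]{1}{X} \cap \mathcal{H}^1_D(X) = s^H(H^1(X;i\mathbb{R}))\) by the key observation and the forward direction applied to \(s([u])\), while the left\hyp{}hand side lies in \(d(\mathcal{H}(X))\). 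The direct sum \eqref{eqn:decomposing-harmonics} then forces \(df = 0\), so \(f \in i\mathbb{R}\) is constant and \(e^f = \bar z\) for some \(z \in S^1\). Hence \(zu = s([u]) \in \GaugeSplit[s](X)\).

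I do not anticipate any serious obstacle; the argument is bookkeeping that uses \(\delta\) to reduce the question to a finite\hyp{}dimensional statement inside \(\mathcal{H}^1_D(X)\). The only subtlety worth verbalizing is the key observation \(\SForms[s]{1}{X} \cap \mathcal{H}^1_D(X) = s^H(H^1(X;i\mathbb{R}))\), after which the two complementary decompositions of \(\mathcal{H}^1_D(X)\) (by \(s^H\) versus by \(d\)) and the \(S^1\)\hyp{}kernel of \(\delta\) combine to give both directions.
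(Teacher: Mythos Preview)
Your proposal is correct and follows essentially the same approach as the paper: both directions are handled via \(\delta\), the induced homological splitting \(s^H\), and the structure encoded in the diagram \eqref{eqn:gauge-groups-and-homology}. The paper's proof is a two\hyp{}line sketch (``follows directly from the definition of \(s^H\)'' and ``chasing arrows in the diagram \eqref{eqn:gauge-groups-and-homology}''), and your argument is a faithful unpacking of exactly that chase, making explicit the key identity \(\SForms[s]{1}{X}\cap\mathcal{H}^1_D(X)=s^H(H^1(X;i\mathbb{R}))\) and the use of \eqref{eqn:decomposing-harmonics} together with \(\ker\delta=S^1\).
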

\begin{proof}
	One direction follows directly from the definition of \(s^H\):
	if \(u = s([u])\), then \(s^H([\delta(u)]) = \delta(u) = u^{-1}du
	\in \mathcal{H}^1_D(X) \simeq H^1(X,\partial X;i \mathbb{R})\),
	so \(u^{-1}du \in \im s^H \subset \SForms[s]{1}{X}\).

	The other direction follows by chasing arrows in the diagram
	\eqref{eqn:gauge-groups-and-homology}.
\end{proof}

The circle \(\circ\) in the superscript indicates that
the only constant gauge transformation contained
in \(\GaugeSplit[s](X)\) is the identity.
This way we do not forget the \(S^1\)\hyp{}action
when taking the quotient by the split gauge group.

We want to compare different split slices
together with the split gauge group actions.
Choose two splittings \(s, s'\).
These determine a~map 
\( s' \cdot s^{-1} : \pi_0 \GaugeHarm(X) \to \GaugeHarmId(X)\).
Viewing \(\pi_0 \GaugeHarm(X)\) as a~sublattice
\(\pi_0 \GaugeHarm(X) \simeq H^1(X; 2 \pi i \mathbb{Z}) \subset H^1(X; i \mathbb{R}),\)
let \(\nu : H^1(X; i \mathbb{R}) \to \GaugeHarmId(X)\) 
be any homomorphism extending \(s' \cdot s^{-1}\).
Define
\begin{align*}
	F_\nu : \CoulFour{s'}{X} &\longrightarrow \CoulFour{s}{X} \\
	(A, \Phi) & \longmapsto \nu\left( \proj_{\im s^H}(A-A_0) \right) \cdot (A, \Phi).
\end{align*}
where \(\proj_{\im s^H} : i\Omega^1_{CC}(X) \to s^H(H^1(X; i\mathbb{R}))\)
is the projection
along \( i (\Omega^1_{CC}(X) \cap \mathcal{H}^1_D(X)) \oplus i d(\mathcal{H}(X))\)
(cf. \eqref{eqn:decomposing-CCs}, \eqref{eqn:decomposing-harmonics}).
\begin{proposition}
	[equivalence of slices]
	\label{prop:equivalence-of-slices}
	The map \(F_\nu\) is well\hyp{}defined,
	a~diffeomorphism,
	equivariant with respect to the action of
	\(\pi_0 \GaugeHarm(X) \simeq \GaugeSplit[s'](X) \simeq \GaugeSplit[s](X)\).
\end{proposition}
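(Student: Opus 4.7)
The plan is to verify in turn the three assertions---that $F_\nu$ is well-defined as a map into $\CoulFour{s}{X}$, that it is a diffeomorphism, and that it is equivariant---with essentially all the substance concentrated in a single linear identity relating $\nu$ to the two homological splittings.

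For well-definedness, I would fix $(A,\Phi) \in \CoulFour{s'}{X}$ and decompose $a := A - A_0 = \alpha + s'^H(\beta)$ using \eqref{eqn:decomposing-CCs} and \eqref{eqn:decomposing-harmonics} attached to $s'$, with $\alpha \in \Omega^1_{CC}(X) \cap (\mathcal{H}^1_D(X))^\perp$ and $\beta \in H^1(X;i\mathbb{R})$. Since $s'^H(\beta) - s^H(\beta) \in d(\mathcal{H}(X))$ by comparing the splittings of $\mathcal{H}^1_D(X)$ in \eqref{eqn:decomposing-harmonics}, we have $\proj_{\im s^H}(a) = s^H(\beta)$, and (identifying $\im s^H$ with $H^1(X;i\mathbb{R})$ via $s^H$) the connection part of $F_\nu(A,\Phi)$ is $A - \delta(\nu(\beta))$. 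Membership in $\CoulFour{s}{X}$ therefore reduces to the identity
\[
d\log\nu(\beta) \;=\; s'^H(\beta) - s^H(\beta) \qquad \text{in } \mathcal{H}^1_D(X), \text{ for all } \beta \in H^1(X;i\mathbb{R}).
\]
Both sides are $\mathbb{R}$-linear in $\beta$ (the left because $\nu$ is a group homomorphism and $\delta$ from \eqref{eqn:gauge-diff} is additive on $\GaugeHarmId(X)$), so it suffices to verify it on the lattice $\pi_0\GaugeHarm(X) \simeq H^1(X; 2\pi i\mathbb{Z})$. For $\beta_{[u]}$ corresponding to $[u]$, the construction of $\nu$ gives $\nu(\beta_{[u]}) = s'([u]) \cdot s([u])^{-1}$, while $\delta(s([u])) = s^H(\beta_{[u]})$ and $\delta(s'([u])) = s'^H(\beta_{[u]})$ directly from \autoref{prop:homological-splitting}; applying $\delta$ and subtracting yields the identity. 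This is the main technical step.

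Smoothness of $F_\nu$ then follows from three easy observations: $\proj_{\im s^H}$ is bounded linear into the finite-dimensional space $\im s^H$; the homomorphism $\nu$ from a vector space to the finite-dimensional Lie group $\GaugeHarmId(X)$ is automatically smooth; and $\GaugeHarmId(X) \subset \GaugeHarm(X)$ acts smoothly on $\ConfFour{X}$ by \autoref{lem:harmonic-gauge-group-acts-continuously}. A two-sided inverse $F_{\nu^{-1}} : \CoulFour{s}{X} \to \CoulFour{s'}{X}$ is built analogously from $\nu^{-1}$ (the homomorphism extending $s \cdot s'^{-1}$, with the roles of $s$ and $s'$ swapped), and $F_{\nu^{-1}} \circ F_\nu = \mathrm{id}$ follows from a short computation using commutativity of $\GaugeHarm(X)$ and the key identity.

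For equivariance, let $[u] \in \pi_0\GaugeHarm(X)$ act on the source by $u' = s'([u])$ and on the target by $v = s([u])$. If $a = \alpha + s'^H(\beta)$ as above, then after acting by $u'$ the $s'^H$-component becomes $\beta - \beta_{[u]}$, so $F_\nu(u' \cdot (A,\Phi)) = \nu(\beta - \beta_{[u]}) \cdot u' \cdot (A,\Phi)$. Using $\nu(\beta_{[u]}) = u' v^{-1}$ together with commutativity of $\GaugeHarm(X)$, this simplifies to $\nu(\beta) \cdot v \cdot (A,\Phi) = v \cdot F_\nu(A,\Phi)$, as desired. The hard part of the whole argument is really just the linear identity $d\log\nu(\beta) = s'^H(\beta) - s^H(\beta)$; once it is established, well-definedness, invertibility, and equivariance all follow from short formal manipulations, while smoothness is immediate from \autoref{lem:harmonic-gauge-group-acts-continuously}.
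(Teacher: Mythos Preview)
Your proof is correct and follows essentially the same approach as the paper: both reduce well\hyp{}definedness to a linear statement about how \(\delta\circ\nu\) relates the two homological splittings (the paper phrases it as showing \(A_\nu\) carries \(\im s^H\) into \(\im (s')^H\), you phrase it as the identity \(\delta(\nu(\beta)) = (s')^H(\beta)-s^H(\beta)\)), both invoke \autoref{lem:harmonic-gauge-group-acts-continuously} for smoothness, and both exhibit an explicit inverse built from \(\nu^{-1}\). Your version is in fact slightly more complete, since you supply the equivariance computation explicitly, which the paper's own proof leaves unaddressed.
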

\begin{proof}
	Firstly, we need to show that the image of \(F_\nu\)
	actually lies in \(\CoulFour{s}{X}\).
	Equivalently, we want to show that
	\begin{align*}
		A_\nu : \Omega^1_{CC}(X) & \longrightarrow \Omega^1_{CC}(X) \\
		a &\longmapsto \nu\mleft(\proj_{\im s^H} a \mright) \cdot a
		= a + \delta\mleft(\nu\mleft(\proj_{\im s^H} a \mright) \mright)
	\end{align*}
	maps \(\Omega^1_s(X)\) to \(\Omega^1_{s'}(X)\).
	We have \(\delta(\GaugeHarmId(X)) = d(\mathcal{H}(X))\)
	and, moreover, \(\delta \circ \nu\) is a~homomorphism,
	thus a~linear map \(H^1(X; i\mathbb{R}) \to d(\mathcal{H}(X))\).
	What follows is that \(A_\nu\) is a~linear map.
	Now \(A_\nu\) restricted to 
	\(\Omega^1_{CC}\mleft(X\mright) \cap (\mathcal{H}^1_D(X))^\perp\)
	is identity by definition,
	so it suffices to show \(A_\nu( \im s^H ) \subset \im ((s')^H)\).
	Furthermore, \(H^1(X; i \mathbb{R})\) is spanned by
	\([\delta(\pi_0\GaugeHarm(X))] \) and therefore 
	\(s^H(H^1(X; i\mathbb{R}))\) is spanned by 
	\(\delta(\GaugeSplit[s](X))\),
	so it suffices to show \(A_\nu(\delta(\GaugeSplit[s](X))) \subset
	\im((s')^H)\).
	But we defined \(\nu\) so that for any \(u \in \GaugeSplit[s](X)\)
	\(\nu(\delta(u)) \cdot u \in \GaugeSplit[s'](X) \),
	which implies \(A_\nu( \delta(u)) = \delta( \nu(\delta(u)) \cdot u)
	\in \im( (s')^H)\), as wished.

	The map \(F_\nu\) is smooth because the map \(\nu\) is smooth
	and the action of the finite\hyp{}dimensional \(\GaugeHarm(X)\)
	on \(\ConfFour{X}\) is smooth.

	It is invertible because \(F_{1/\nu}\) 
	is its inverse.
	Indeed, since \(\im \nu \subset \GaugeHarmId(X)\),
	we have that \(\delta \nu \in d(\mathcal{H}(X))\),
	so \(\proj_{\im s^H}(\delta \nu) \equiv 0\).
	Therefore 
	\begin{equation*}
		\proj_{\im s^H} \left(
		\nu(\proj_{\im s^H}(A-A_0)) A - A_0\right)
		= \proj_{\im s^H}(A-A_0),
	\end{equation*}
	so
	\begin{equation*}
		\left(\nu\left(\proj_{\im s^H} \left(
		\nu(\proj_{\im s^H}(A-A_0)) A - A_0\right)\right)\right)^{-1}
		= (\nu(\proj_{\im s^H}(A-A_0)))^{-1}
	\end{equation*}
	and \(F_{1/\nu} \circ F_\nu = \mathrm{id}\) follows.
\end{proof}

Finally, we discuss the gauge slice used by Lipyanskiy \cite{Lip2008} and
Khandhawit \cites{Kha2015,KLS2018}.
They require that \(a \in \Omega^1_{CC}(X)\) 
and that for each component
\(Y_i \subset \partial X\) we have 
\(\int_{Y_i} \iota^\ast(\star a) = 0\).
Using Stokes' theorem one can show 
that for \(a \in \Omega^1_{CC}(X)\) this integral condition 
is equivalent to the condition that
\(\int_X df \wedge \star a = 0\) for any \(f \in \mathcal{H}(X)\).
This fits into our setup perfectly, since there is exactly one
homological splitting \(s^H_\perp\) such that 
\(\im s^H_\perp = \mathcal{H}^1_D(X) \cap (d(\mathcal{H}(X)))^\perp\).
\begin{definition}
	[orthogonal splitting]
	\label{def:orthogonal-splitting}
	We call \(s^H_\perp\) the \emph{orthogonal homological splitting}.
	We say that a~splitting \(s\) is
	a~\emph{orthogonal splitting}
	if \(s^H = s^H_\perp\).
\end{definition}

\subsection{Restriction to the boundary and twisting}
\label{sec:restriction-to-the-boundary-and-twisting}

Unless \(\partial X\) is connected,
we are not guaranteed that the \emph{restriction} to the boundary
is invariant under the action of the split gauge group
\(\GaugeSplit[s](X)\).
If it happens to be invariant for some \(s\), 
we call such \(s\) an~\emph{integral splitting}.
For a~general \(s\), we introduce and prove the existence of \emph{twistings}
of the restriction map, making it invariant under
the action of \(\GaugeSplit[s](X)\) action even for non\hyp{}integral \(s\).
As mentioned before, integral splittings are utilized in the proof of
the \namedref{thm:composing-cobordisms},
while non\hyp{}integral splittings may be more convenient in other contexts
(e.g., in constructions of \cites{Lip2008,Kha2015,KLS2018}).

We start by defining the restriction maps
for an~embedding \(\iota_Y : Y \hookrightarrow X\)
of an~oriented \(3\)\hyp{}manifold \(Y\).
Denote by \(\mathfrak{s}\) the restriction to \(Y\)
of the \spinc{} structure \(\hat{\mathfrak{s}}\) on \(X\).
We get canonical identifications \(S^\pm_X|_Y \simeq S_Y\).
Assuming \(Y\) is a~geodesic codimension\hyp{}\(1\) submanifold of \(X\),
the \spinc{} connection \(A_0\) induces a~\spinc{} connection \(B_0\) on \(Y\)
by simple restriction: 
\(\nabla^{B_0} = \iota_Y^\ast \nabla^{A_0}\).
Let \(a \in \SForms{1}{X}\), 
\(A \in \SConnections{1}{A_0}{X}\),
\(\Phi \in \SSpinors[+]{1}{X}\)
and \(u \in \Gauge(X)\).
We define the restrictions:
\begin{align*}
	R(a) &= \iota_Y^\ast(a) \in \SForms{1/2}{Y}, \\
	R(A) &= B_0 + \iota_Y^\ast(A-A_0) \in \SConnections{1/2}{B_0}{Y}, \\
	R(\Phi) &= \Phi|_Y \in \SSpinors{1/2}{Y}, \\
	R(u) &= u|_Y \in \Gauge(Y).
\end{align*}

Integral splittings are the ones for which restriction maps
are invariant under the split gauge group.
\begin{definition}
	[integral splitting]
	\label{def:integral-splitting}
	We call a~gauge splitting \(s\) \emph{integral}
	if for each \(u \in \GaugeSplit[s](X)\)
	we have \(u|_{\partial X} \equiv 1\).

	Equivalently, \(s\) is integral if the composition
	\(\pi_0\GaugeHarm(X) \xrightarrow{s} 
	\GaugeHarm(X) \xrightarrow{R} 
	\GaugeHarm(\partial X) \simeq (S^1)^{\pi_0(\partial X)}\)
	is trivial.
\end{definition}

The integrality of \(s\) is closely connected to the integrality of \(s^H\).
\begin{proposition}
	[homological classification of integral splittings]
	\label{prop:homological-classification-of-integral-splittings}
	If \(s\) is integral, then 
	\(s^H(H^1(X;\mathbb{Z})) \subset H^1(X, \partial X; \mathbb{Z})\),
	i.e., 
	\(s^H\) is integral as well.
	
	Given any integral homological splitting \(\sigma\)
	there exists a~unique integral splitting
	\(s\) such that \(\sigma = s^H\).
\end{proposition}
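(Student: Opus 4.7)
The plan is to deduce both implications from the compatibility between the gauge-theoretic map \(\delta/(2\pi i)\) and the classical identification of \(H^1(X,\partial X;\mathbb{Z})\) with based homotopy classes of maps \((X,\partial X) \to (S^1,1)\). Throughout I use the identifications \(\pi_0\GaugeHarm(X) \simeq H^1(X;\mathbb{Z})\) and \(\mathcal{H}^1_D(X) \simeq H^1(X,\partial X;\mathbb{R})\) from \autoref{prop:sequence-of-harmonic-gauge-groups}.

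For the forward direction, assume \(s\) is integral. For each \([u] \in \pi_0\GaugeHarm(X)\), the harmonic gauge transformation \(s([u]) : X \to S^1\) satisfies \(s([u])|_{\partial X} \equiv 1\), so it represents a class in \([(X,\partial X),(S^1,1)] = H^1(X,\partial X;\mathbb{Z})\). Its de Rham representative is precisely \(\delta(s([u]))/(2\pi i) \in \mathcal{H}^1_D(X)\), which by construction equals \(s^H([u])\); hence \(s^H([u]) \in H^1(X,\partial X;\mathbb{Z})\).

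For the reverse direction, I intend to reuse the integration construction from the proof of \autoref{prop:gauge-splittings-from-homological-splittings}, but with the basepoint chosen on the boundary. Fix \(x_0 \in \partial X\) and, for each \([u] \in \pi_0\GaugeHarm(X)\), let \(\eta_{[u]} \in \mathcal{H}^1_D(X)\) be the harmonic Dirichlet form with \([\eta_{[u]}]/(2\pi i) = \sigma([u])\). Integrality of \(\sigma\) means \(\eta_{[u]}\) has periods in \(2\pi i\mathbb{Z}\) on every relative \(1\)-cycle in \(H_1(X,\partial X;\mathbb{Z})\), not merely on loops in \(X\). Defining \(s([u]) := I_{x_0}(\eta_{[u]})\) as in \eqref{eqn:integrating-1-forms}, the key observation is that for any \(x \in \partial X\), a path \(\gamma\) from \(x_0\) to \(x\) in \(X\) has boundary \(\partial \gamma = x - x_0\) supported on \(\partial X\), making it a relative \(1\)-cycle; hence \(\int_\gamma \eta_{[u]} \in 2\pi i\mathbb{Z}\) and \(s([u])(x) = 1\), so \(s\) is integral. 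Linearity, the homomorphism property, and \(s^H = \sigma\) follow exactly as in the cited proposition.

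For uniqueness, suppose \(s\) and \(s'\) are both integral with \(s^H = (s')^H\). Then \(c([u]) := s'([u]) \cdot s([u])^{-1}\) lies in \(\ker \delta = S^1\) for each \([u]\), and evaluating at any point of \(\partial X\) and using integrality of both splittings forces \(c([u]) = 1\), so \(s = s'\). The main technical point is the period calculation in the reverse direction: integral classes in \(H^1(X,\partial X;\mathbb{Z})\) are detected by integer periods on relative \(1\)-cycles joining distinct boundary components, which is precisely what ensures that integration from a single basepoint \(x_0 \in \partial X\) produces a gauge transformation restricting to the identity on \emph{all} of \(\partial X\), not merely on the component containing \(x_0\).
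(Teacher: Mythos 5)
Your proof is correct and follows essentially the same strategy as the paper's: both directions hinge on the integration map \(I_{x_0}\) with a basepoint chosen on \(\partial X\), and the key technical point in both is that integral classes in \(H^1(X,\partial X;\mathbb{Z})\) pair integrally with relative \(1\)-cycles (in particular with paths between boundary points), so the integrated gauge transformation equals \(1\) on all of \(\partial X\). Your forward direction packages the period computation via the identification \(H^1(X,\partial X;\mathbb{Z}) \cong [(X,\partial X),(S^1,1)]\) rather than writing the periods directly, and you spell out the uniqueness step (which the paper leaves implicit, relying on \autoref{prop:gauge-splittings-from-homological-splittings}), but these are presentational differences rather than a distinct route.
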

\begin{proof}
	Assume \(s\) is integral.
	Choose \(y_0 \in \partial X\)
	and consider the map \(I_{y_0}\) defined in
	\eqref{eqn:integrating-1-forms}.
	We know \(s\) and \(I_{y_0} \circ s^H \circ [\delta]\)
	differ by action of \(S^1\),
	but since both are equal to \(1\) at \(y_0\),
	thus \(s = I_{y_0} \circ s^H \circ [\delta]\).
	This implies that for any \(y \in \partial X\)
	and any embedded curve \(\gamma : [0,1] \to X\)
	with \(\gamma(0) = y_0\) and \(\gamma(1) = y\)
	we have that \(\exp\left(\int_{y_0}^{y} s^H([\delta(u)])\right) = 1\)
	and thus \(\int_{y_0}^{y_1} s^H([\delta(u)]) \in 2 \pi i \mathbb{Z}\).
	This proves that \(s^H\) is integral.

	Similarly, if \(\sigma\) is integral,
	then \(s = I_{y_0} \circ \sigma \circ [\delta]\)
	satisfies that \(\sigma = s^H\)
	and \(s(y) = \exp\left(\int_{y_0}^{y} s^H([\delta(u)])\right) = 1\)
	for any \(y \in \partial X\).
\end{proof}

To find a~consistent way of \emph{twisting} the boundary values of \(1\)\hyp{}forms
we consider ways to ``undo'' the action of \(\GaugeSplit[s](X)\)
on the boundary ``in a~linear fashion''.
\begin{definition}
	[gauge twisting]
	\label{def:gauge-twisting}
	We call a~homomorphism 
	\begin{math}
		\tau:H^1(X; i \mathbb{R}) \to \GaugeHarm(\partial X) 
		\simeq (S^1)^{\pi_0(\partial X)}
	\end{math}
	a~\emph{gauge twisting} for \(s\) if the composition
	\begin{equation*}
		\pi_0\GaugeHarm(X)
		\simeq
		H^1(X; 2 \pi i \mathbb{Z})
		\hookrightarrow 
		H^1(X; i \mathbb{R})
		\xrightarrow{\tau}
		\GaugeHarm(\partial X)
	\end{equation*}
	agrees with the action of the split gauge group on the boundary,
	\( R \circ s: \pi_0\GaugeHarm(X) \to \GaugeHarm(\partial X)\).
\end{definition}
Continuous homomorphisms from a~vector space
to \(S^1\) correspond to linear functionals on the vector space.
Thus, every such twisting \(\tau\) comes from a~linear map
\(d\tau : H^1(X;i\mathbb{R}) \to H^0(\partial X; i \mathbb{R}) \)
and \(\tau = \exp \circ (d \tau)\).
We utilize it to prove the existence of gauge twistings for \(s\),
and one could use it to classify all possible gauge twistings
for \(s\).
Actually, every such homomorphism \(\tau\) is a~gauge twisting for some \(s\),
but we do not use this fact in this article.
\begin{lemma}
	[existence of gauge twistings]
	\label{lem:existence-of-gauge-twistings}
	For a~given gauge splitting  \(s\) there exists a~gauge twisting \(\tau\).
\end{lemma}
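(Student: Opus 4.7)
The plan is to reduce the construction of $\tau$ to a linear extension problem, exploiting the remark in the article that continuous homomorphisms from a real vector space to $S^1$ correspond, via componentwise exponentiation, to real linear functionals. Thus it suffices to construct an $\mathbb{R}$\hyp{}linear map $d\tau : H^1(X; i \mathbb{R}) \to H^0(\partial X; i\mathbb{R})$ whose composition with $\exp$ restricts to $R \circ s$ on $H^1(X; 2\pi i \mathbb{Z})$, and then set $\tau := \exp \circ d\tau$.

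First I would invoke the remark following \autoref{prop:sequence-of-harmonic-gauge-groups} that $H^1(X;\mathbb{Z})$ is torsion\hyp{}free. Consequently $L := H^1(X; 2 \pi i \mathbb{Z})$ is a finitely generated free abelian group and embeds in $V := H^1(X; i \mathbb{R})$ as a full\hyp{}rank lattice, i.e., $L \otimes_{\mathbb{Z}} \mathbb{R} = V$. In particular, any $\mathbb{Z}$\hyp{}basis $e_1, \ldots, e_k$ of $L$ is simultaneously an $\mathbb{R}$\hyp{}basis of $V$.

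Next I would construct $d\tau$ on basis elements and extend. Since the componentwise exponential
\begin{equation*}
\exp : H^0(\partial X; i \mathbb{R}) \longrightarrow \left(S^1\right)^{\pi_0(\partial X)} \simeq \GaugeHarm(\partial X)
\end{equation*}
is a surjective continuous homomorphism, for each $j$ I would choose any lift $\xi_j \in H^0(\partial X; i \mathbb{R})$ of $R \circ s(e_j) \in \GaugeHarm(\partial X)$. Setting $d\tau(e_j) := \xi_j$ and extending $\mathbb{R}$\hyp{}linearly to all of $V$ yields the required functional; by construction $\exp \circ d\tau$ agrees with $R \circ s$ on the generators $e_j$, hence on all of $L$ by homomorphism property.

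This argument is essentially formal and presents no real analytic obstacle. The only point requiring mild care is the lifting step: the values $R \circ s(e_j) \in S^1$ must be lifted to $i \mathbb{R}$, and since the $e_j$ are $\mathbb{Z}$\hyp{}linearly independent generators of the free abelian group $L$, no compatibility relations need to be preserved and arbitrary lifts suffice. The torsion\hyp{}freeness of $H^1(X;\mathbb{Z})$, together with the surjectivity of $\exp$ onto each $S^1$\hyp{}factor, is what makes this step unobstructed.
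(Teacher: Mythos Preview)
Your proposal is correct and follows essentially the same route as the paper: the paper also lifts $R \circ s$ through $\exp$ using freeness of $\pi_0\GaugeHarm(X) \simeq H^1(X;2\pi i\mathbb{Z})$, then extends the lift $\mathbb{R}$\hyp{}linearly to $H^1(X;i\mathbb{R})$ and sets $\tau = \exp \circ \widetilde{\tau}$. Your version simply makes the lifting step explicit by choosing a $\mathbb{Z}$\hyp{}basis and lifting generator by generator, which is exactly the content of the projectivity of free abelian groups invoked implicitly in the paper.
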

\begin{proof}
	Since \(\pi_0 \GaugeHarm(X)\) is free,
	we can lift the map
	\( R \circ s: H^1(X; 2 \pi i\mathbb{Z})
	\simeq \pi_0 \GaugeHarm(X) \to \GaugeHarm(\partial X) 
	\simeq (S^1)^{\pi_0(X)}\)
	to a~homomorphism
	\(\widetilde{\tau} : H^1(X; 2 \pi i \mathbb{Z}) 
	\to H^0(\partial X; i\mathbb{R})\):
	\begin{equation*}
		\begin{tikzcd}
			 & H^0(\partial X; i\mathbb{R}) \arrow{d}{\exp} \\
			\pi_0 \GaugeHarm(X) \arrow{r}{R \circ s} 
			\arrow[dashed]{ru}{\widetilde{\tau}} 
			 & (S^1)^{\pi_0(\partial X)},
		\end{tikzcd}
	\end{equation*}
	and this extends to a~map
	\(\widetilde{\tau} : H^1(X; i \mathbb{R}) \to H^0(\partial X; i\mathbb{R})\) 
	by linearity.
	Taking \(\tau = \exp \circ \widetilde{\tau}
	: H^1(X; i \mathbb{R}) \to (S^1)^{\pi_0(\partial X)}\)
	gives a~gauge twisting for \(s\).
\end{proof}

With \(\tau\) in hand, there is a~way of defining
a~twisting on the whole Coulomb slice,
enabling us to finally define the twisted restriction maps.
\begin{definition}
	[twisted restriction map]
	\label{def:twisted-restriction-map}
	We define the \emph{Coulomb slice twisting}
	\begin{math}
		\tau_{CC}: \SForms[CC]{1}{X} \to \GaugeHarm(\partial X)
	\end{math}
	associated to \(\tau\)
	to be the composition
	\begin{align*}
		\SForms[CC]{1}{X}
		&\xrightarrow{\proj_{L^2,\perp}}
		\mathcal{H}^1_D(X)
		\simeq H^1(X, \partial X; \mathbb{R})
		\\&\xrightarrow{\iota_X^\ast}
		H^1(X; \mathbb{R})
		\xrightarrow{\tau}
		(S^1)^{\pi_0(\partial X)}
		\simeq \GaugeHarm(\partial X).
	\end{align*}

	We define the \emph{twisted restriction map}
	\begin{equation*}
		R_\tau : \CoulFour{CC}{X} 
		\to \CoulThree{\partial X}
	\end{equation*}
	by the formula
	\begin{math}
		R_\tau (A,\Phi) = (R(A), \tau_{CC}(A-A_0) R(\Phi)).
	\end{math}
\end{definition}

\begin{remark}
	What is of importance for defining the twisted restriction maps 
	is the map
	\(\tau_{CC} : i \Omega^1_{s}(X) \to (S^1)^{\pi_0(\partial X)}\).
	The extension of \(\tau_{CC}\) to the whole of
	\(i \Omega^1_{CC}(X)\) is artificial:
	it does not undo the action of
	\(\GaugeHarmId(X)\) on the boundary
	as one might expect.

	With more work, 
	including a~choice of a~based gauge group
	\( \GaugeHarm_o(X) \subset \GaugeHarm(X) \)
	(such that \( \GaugeHarm(X) / \GaugeHarm_o(X) \simeq S^1\))
	and a~more general twisting,
	one could work with the full \(i \Omega^1_{CC}(X)\) 
	and then quotient by the action of \(\GaugeHarm_o(X)\).
	However, this would introduce unnecessary complications.
\end{remark}

These twisted restriction maps
are indeed invariant under \(\GaugeSplit[s](X)\).
\begin{lemma}
	[twisted restriction map is invariant under split gauge group]
	\label{lem:twisted-restriction-invariance}
	Let \(\tau\) be a~gauge twisting for \(s\).
	For any \( (A,\Phi) \in \CoulFour{CC}{X} \)
	(resp. \( (a,\phi) \in \TangCoulFour{X} \))
	and \(u \in \GaugeSplit[s](X)\)
	we have
	\begin{equation*}
		R_\tau(u(A,\Phi)) = R_\tau(A,\Phi)
	\end{equation*}
	(resp. \(R_\tau(u(a,\phi)) = R_\tau(a,\phi)\)).
\end{lemma}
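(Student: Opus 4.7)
The plan is to expand \(R_\tau(u(A,\Phi))\) directly from the definitions, reducing the invariance to two identities that follow from the structural properties of the split gauge group and the defining property of gauge twistings.

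Writing \(u(A,\Phi) = (A - u^{-1}du, u\Phi)\) and using that \(\tau_{CC}\) is a~homomorphism, the invariance \(R_\tau(u(A,\Phi)) = R_\tau(A,\Phi)\) reduces to the pair of identities
\begin{equation*}
	\iota_{\partial X}^\ast(u^{-1}du) = 0 \qquad \text{and} \qquad \tau_{CC}(u^{-1}du) = u|_{\partial X}.
\end{equation*}
The first identity is immediate: any \(u \in \GaugeSplit[s](X) \subset \GaugeHarm(X)\) satisfies \(\delta(u) = u^{-1}du \in i\mathcal{H}^1_D(X)\), and the Dirichlet condition built into the definition of \(\mathcal{H}^1_D(X)\) yields \(\iota_{\partial X}^\ast(u^{-1}du) = 0\).

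The second identity is the main step and requires chasing \(u^{-1}du\) through the chain of maps defining \(\tau_{CC}\). Writing \(u = s([u])\) with \([u] \in \pi_0\GaugeHarm(X)\): the \(L^2\)-orthogonal projection \(\Pi_{L^2,\perp}\) acts trivially on \(u^{-1}du\) since it already lies in \(i\mathcal{H}^1_D(X)\); by \autoref{prop:homological-splitting}, the class of \(u^{-1}du\) in \(H^1(X, \partial X; i\mathbb{R})\) equals \(2\pi i\, s^H([u])\), so applying \(\iota_X^\ast\) (which is a~left inverse of \(s^H\)) yields \(2\pi i [u] \in H^1(X; i\mathbb{R})\); finally, the defining property of \(\tau\) as a~gauge twisting for \(s\) gives \(\tau(2\pi i [u]) = R(s([u])) = u|_{\partial X}\). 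Composing these three steps yields \(\tau_{CC}(u^{-1}du) = u|_{\partial X}\), as required.

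The tangent-vector version follows by the chain rule: interpreting \(R_\tau\) on tangent vectors as its differential \(DR_\tau\) and the gauge action on tangent vectors as its pushforward, differentiating the configuration-level identity \(R_\tau \circ u = R_\tau\) at any basepoint yields \(DR_\tau(u \cdot (a, \phi)) = DR_\tau(a, \phi)\). The main conceptual obstacle is really just the bookkeeping in the second identity, where one must track the factors of \(2\pi i\) across the Hodge-theoretic identifications and confirm that the twisting \(\tau\) was defined precisely so as to compensate for the action of \(s(\pi_0\GaugeHarm(X))\) on \(\partial X\).
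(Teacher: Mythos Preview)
Your proof is correct and follows essentially the same approach as the paper's: both reduce to the two identities \(\iota_{\partial X}^\ast(u^{-1}du)=0\) and \(\tau_{CC}(u^{-1}du)=u|_{\partial X}\), then verify the second by writing \(u=s([u])\) and unwinding the definition of \(\tau_{CC}\) using the defining property of \(\tau\). Your version is somewhat more explicit about tracing through the individual maps in the composition defining \(\tau_{CC}\) and about the \(2\pi i\) bookkeeping, and you add a brief justification for the tangent-vector case, which the paper leaves implicit.
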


\begin{proof}
	Since \(u \in \GaugeHarm(X)\), 
	we have \(\iota_{\partial X}^\ast(u^{-1}du) = 0\),
	so \(R(A - u^{-1} du) = R(A)\).
	
	It remains to prove
	\begin{equation*}
		\tau_{CC}(A - A_0 - u^{-1}du) R(u \Phi) = \tau_{CC}(A-A_0) R(\Phi)
	\end{equation*}
	but that is equivalent to
	\begin{equation*}
		(\tau_{CC}(-u^{-1}du) R(u)) 
		\tau_{CC}(A - A_0) R(\Phi) = \tau_{CC}(A-A_0) R(\Phi)
	\end{equation*}
	so it suffices to prove
	\(\tau_{CC}(u^{-1}du) = R(u)\).
	Since \(s\) splits 
	\eqref{eqn:harmonic-gauge-sequence},
	we have
	\(u = s([u])\), where \( [u] \in \pi_0\GaugeHarm(X)\)
	is the homotopy class of \(u\).
	So we have to prove
	\(\tau_{CC}(u^{-1}du) = R\circ s([u])\).
	This follows directly from \autoref{def:gauge-twisting}
	of the twistings
	and \autoref{def:twisted-restriction-map}
	of the twisted restriction map,
	since \(u^{-1}du \in \mathcal{H}^1_D(X)\)
	and the isomorphism 
	\(\pi_0\GaugeHarm(X) \simeq H^1(X; 2 \pi i \mathbb{Z})\)
	is given by
	\( [u] \mapsto [u^{-1}du]\).
\end{proof}

We conclude these sections by showing that
choosing \(\tau\) is essentially equivalent to
choosing an~integral splitting.
In general, one can restrict themselves to considering
integral splittings without any twisting at all.

\begin{proposition}
	[twistings are integral splittings]
	\label{prop:twistings-are-integral-splittings}
	Let \(\tau\) be a~twisting for \(s\).
	Then there is an~integral splitting \(s_{\mathbb{Z}}\)
	and an~equivariant diffeomorphism
	\begin{equation*}
		F_{s,\tau} : \CoulFour{s}{X} \to \CoulFour{s_{\mathbb{Z}}}{X}
	\end{equation*}
	such that
	\(R \circ F_{s,\tau} = R_\tau\).
\end{proposition}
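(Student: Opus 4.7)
The plan is to realize $s_{\mathbb{Z}}$ as a modification of $s$ by a homomorphism into $\GaugeHarmId(X)$ and then obtain $F_{s,\tau}$ as a specific instance of the $F_\nu$ construction from \autoref{prop:equivalence-of-slices}. The main input is the linear map $d\tau : H^1(X; i\mathbb{R}) \to H^0(\partial X; i\mathbb{R})$ encoding the twisting (so that $\tau = \exp \circ d\tau$), combined with the isomorphism $i\mathcal{H}(X) \simeq H^0(\partial X; i\mathbb{R})$ by harmonic extension already used in \autoref{prop:sequence-of-harmonic-gauge-groups}, whose inverse I denote by $H$. Set
\begin{equation*}
	\nu : H^1(X; i\mathbb{R}) \to \GaugeHarmId(X),
	\qquad
	\nu(\omega) = \exp\bigl(H(d\tau(\omega))\bigr),
\end{equation*}
and, for $[u] \in \pi_0\GaugeHarm(X) \simeq H^1(X; 2\pi i \mathbb{Z}) \subset H^1(X; i\mathbb{R})$, define the candidate splitting by
\begin{equation*}
	s_{\mathbb{Z}}([u]) = \nu([u])^{-1} \cdot s([u]).
\end{equation*}

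Because $\nu([u])^{-1} \in \GaugeHarmId(X)$, the class of $s_{\mathbb{Z}}([u])$ in $\pi_0\GaugeHarm(X)$ is still $[u]$, so $s_{\mathbb{Z}}$ is a splitting; it is integral since
\begin{equation*}
	R(s_{\mathbb{Z}}([u])) = \exp(-d\tau([u])) \cdot R(s([u])) = \exp(-d\tau([u])) \cdot \tau([u]) = 1
\end{equation*}
by the defining property of a gauge twisting. By construction, $\nu$ extends the homomorphism $s \cdot s_{\mathbb{Z}}^{-1} : \pi_0\GaugeHarm(X) \to \GaugeHarmId(X)$, so \autoref{prop:equivalence-of-slices} yields an equivariant diffeomorphism $F_\nu : \CoulFour{s}{X} \to \CoulFour{s_{\mathbb{Z}}}{X}$, which I take as $F_{s,\tau}$.

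It remains to check $R \circ F_{s,\tau} = R_\tau$. For $(A,\Phi) \in \CoulFour{s}{X}$, decompose $A - A_0 = (A-A_0)_\perp + s^H(\omega)$ as in \eqref{eqn:decomposing-CCs} and \eqref{eqn:decomposing-harmonics}. Then
\begin{equation*}
	F_{s,\tau}(A,\Phi) = \bigl(A - d H(d\tau(\omega)),\, \exp(H(d\tau(\omega))) \Phi\bigr).
\end{equation*}
The key point is that $H(d\tau(\omega))|_{\partial X} = d\tau(\omega)$ is locally constant on $\partial X$, so $R(d H(d\tau(\omega))) = d(d\tau(\omega)) = 0$ and the connection part restricts to $R(A)$; the spinor part restricts to $\exp(d\tau(\omega)) R(\Phi) = \tau(\omega) R(\Phi) = \tau_{CC}(A-A_0) R(\Phi)$, since $\tau_{CC}$ by definition extracts $\omega$ from $A - A_0$ via $\iota_X^\ast \circ (s^H)^{-1}$ and applies $\tau$. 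This locally constant boundary trace is the only delicate geometric point; the remainder of the argument is bookkeeping of the identifications already developed in \autoref{sec:split-Coulomb-slice-on-4-manifolds}.
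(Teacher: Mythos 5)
Your proposal is correct and follows the same strategy as the paper: lift $\tau$ to a homomorphism $\nu:H^1(X;i\mathbb{R})\to\GaugeHarmId(X)$, set $s_{\mathbb{Z}}=\nu^{-1}\cdot s$, and realize $F_{s,\tau}$ as an instance of the $F_\nu$ construction from \autoref{prop:equivalence-of-slices}. The only genuine difference is how the lift is obtained: you build $\nu$ directly from $d\tau$ and harmonic extension, whereas the paper deduces its existence and uniqueness abstractly from the exact sequence $0\to H^0(\partial X;2\pi i\mathbb{Z})/H^0(X;2\pi i\mathbb{Z})\to\GaugeHarmId(X)\to\GaugeHarm(\partial X)\to 0$ and the fact that the discrete kernel admits no nontrivial homomorphisms from a vector space. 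Your route is slightly more constructive; the paper's emphasizes uniqueness of the lift. Both are fine, and they produce the same $\tilde\tau$.

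One point worth flagging: the paper's proof writes $F_{s,\tau}=F_{(\tilde\tau)^{-1}}$, while you take $F_{s,\tau}=F_{\tilde\tau}$. Tracing through your boundary verification (which the paper omits, saying only that the equality follows from construction), the choice $\nu=\tilde\tau$ is the one that actually yields $R\circ F_{s,\tau}=R_\tau$: the spinor gets multiplied by $\tilde\tau(\omega)$, which restricts to $\tau(\omega)=\tau_{CC}(A-A_0)$ on $\partial X$, while the connection part is unchanged on the boundary because $\delta(\tilde\tau(\omega))$ restricts to $d$ of a locally constant function. Also note that the source and target of $F_\nu$ are stated one way in \autoref{prop:equivalence-of-slices} and the opposite way in its proof, so there is a sign-convention wobble in the source you did not see; your explicit computation resolves it unambiguously and in the direction you chose. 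In short, your proposal is correct, and the extra explicitness is a net improvement over what the paper records.
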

\begin{proof}
	Every function \(f \in \mathcal{H}(X)\) is determined by its 
	restriction to the boundary
	\(f|_{\partial X}\),
	which is locally constant.
	We thus have the exact sequence
	\begin{equation*}
		0 \to H^0(\partial X; 2 \pi i \mathbb{Z})
		\to H^0(\partial X; i \mathbb{R}) \simeq i \mathcal{H}(X)
		\xrightarrow{f \mapsto \exp(f|_{\partial X})} \GaugeHarm(\partial X)
		\to 0
	\end{equation*}
	as well as
	\begin{equation*}
		0 \to H^0(X; 2 \pi i \mathbb{Z})
		\to i \mathcal{H}(X)
		\xrightarrow{\exp} \GaugeHarmId(X)
		\to 0
	\end{equation*}
	and from these two it follows that
	\begin{equation}
		\label{eqn:gauge-restriction-ses}
		0 \to H^0(\partial X; 2 \pi i \mathbb{Z}) / H^0(X; 2 \pi i \mathbb{Z})
		\to \GaugeHarmId(X)
		\xrightarrow{\cdot |_{\partial X}} \GaugeHarm(\partial X)
		\to 0
	\end{equation}
	is exact.
	Since the group to the left is discrete
	it follows that there exists a~unique lift \(\tilde{\tau}\)
	of \(\tau\) to \(\GaugeHarmId(X)\):
	\begin{equation*}
		\begin{tikzcd}
			& \GaugeHarmId(X) \arrow{d}{\cdot|_{\partial X}} \\
			H^1(X; i \mathbb{R}) \arrow[dashed]{ru}{\tilde{\tau}}
			\arrow{r}{\tau}
			& \GaugeHarm(\partial X)
		\end{tikzcd}
	\end{equation*}
	We define 
	\begin{equation*}
		s_{\mathbb{Z}}([u]) = (\tilde{\tau}([u]))^{-1} \cdot s([u])
	\end{equation*}
	for any \( [u] \in \pi_0 \Gauge(X) \simeq H^1(X; 2 \pi i \mathbb{Z})\),
	and
	\begin{equation*}
		F_{s,\tau} = F_{(\tilde{\tau})^{-1}}
	\end{equation*}
	using the construction of \(F_\nu\) 
	of \autoref{prop:equivalence-of-slices}.
	This gives an~equivariant diffeomorphism
	from \(\CoulFour{s}{X}\) to \(\CoulFour{s_{\mathbb{Z}}}{X}\).

	The equality \(R \circ F_{s,\tau} = R_\tau\) 
	follows from the construction.
\end{proof}

Even though the spaces \(\CoulFour{s_{\mathbb{Z}}}{X}\)
and \(\CoulFour{s_{\mathbb{Z}}'}{X}\) are equivariantly diffeomorphic
by \autoref{prop:equivalence-of-slices},
the corresponding restriction maps differ by a~twist.
Thus, \textit{a priori} we cannot get rid of the choice of a~splitting.
However, this is not relevant to most 
of the applications because for connected boundary
there is no choice to make.
\begin{lemma}
	[uniqueness of integral splittings]
	\label{lem:uniqueness-of-integral-splittings}
	If \(\partial X \neq \varnothing\) is connected, 
	there exists exactly one integral splitting \(s_\mathbb{Z}\).
\end{lemma}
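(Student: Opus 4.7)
The plan is to reduce the problem to uniqueness of an integral homological splitting via \autoref{prop:homological-classification-of-integral-splittings}, and then use the connectedness of $\partial X$ to make the relevant exact sequence trivial.

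First I would observe that because $X$ and $\partial X$ are both connected and nonempty, the natural map $H^0(X;\mathbb{R}) \to H^0(\partial X;\mathbb{R})$ is an isomorphism, so $H^0(\partial X;\mathbb{R})/H^0(X;\mathbb{R}) = 0$. The exact sequence \eqref{eqn:relative-homology-exact-sequence} then degenerates into an isomorphism $\iota_X^\ast : H^1(X, \partial X;\mathbb{R}) \xrightarrow{\sim} H^1(X;\mathbb{R})$, so there is a unique homological splitting $\sigma$, namely the inverse of this isomorphism.

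Next I would check that this $\sigma$ is automatically integral. Running the long exact sequence of the pair $(X, \partial X)$ with $\mathbb{Z}$ coefficients, the same observation about $H^0$ gives injectivity of $H^1(X,\partial X;\mathbb{Z}) \to H^1(X;\mathbb{Z})$. For surjectivity I would use that $b_1(\partial X)=0$ forces $H^1(\partial X;\mathbb{Z})$ to be pure torsion, while $H^1(X;\mathbb{Z})$ is torsion\hyp{}free (as recorded in the remark after \autoref{prop:sequence-of-harmonic-gauge-groups}); the map between them in the long exact sequence must therefore vanish. Hence $H^1(X,\partial X;\mathbb{Z}) \to H^1(X;\mathbb{Z})$ is an isomorphism and its inverse $\sigma$ sends $H^1(X;\mathbb{Z})$ into $H^1(X,\partial X;\mathbb{Z})$, i.e., $\sigma$ is integral.

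Finally, by \autoref{prop:homological-classification-of-integral-splittings} this unique integral homological splitting $\sigma$ lifts uniquely to an integral splitting $s_{\mathbb{Z}}$ with $s_{\mathbb{Z}}^H = \sigma$, giving both existence and uniqueness. (Existence had already been available via \autoref{prop:twistings-are-integral-splittings} and \autoref{lem:existence-of-gauge-twistings}, but the argument above reproves it in this case.) The only non\hyp{}bookkeeping step is the torsion observation that upgrades the unique real homological splitting to an integral one; everything else is a direct consequence of the previously established structural results.
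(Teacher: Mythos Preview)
Your proof is correct but takes a different, more indirect route than the paper. The paper argues directly at the level of gauge groups: when \(\partial X\) is connected, the exact sequence \eqref{eqn:gauge-restriction-ses} shows that the restriction \(\GaugeHarmId(X) \to \GaugeHarm(\partial X)\) is an isomorphism, so each class \([u] \in \pi_0\GaugeHarm(X)\) has a \emph{unique} lift \(u \in \GaugeHarm(X)\) with \(u|_{\partial X} = 1\); this forced lift is automatically a homomorphism and is therefore the unique integral splitting. Your approach instead passes through homological splittings via \autoref{prop:homological-classification-of-integral-splittings}, observes that connectedness of \(\partial X\) collapses \eqref{eqn:relative-homology-exact-sequence} so that the real homological splitting is unique, and then runs the long exact sequence with \(\mathbb{Z}\) coefficients together with the torsion\hyp{}freeness of \(H^1(X;\mathbb{Z})\) to verify that this unique splitting is integral. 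The paper's argument is shorter and avoids any torsion discussion; your argument has the virtue of making integrality explicit in cohomological terms and of reusing the already\hyp{}established bijection between integral gauge splittings and integral homological splittings, which is conceptually tidy even if longer.
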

\begin{proof}
	In this case, the restriction map
	\(\GaugeHarmId(X) \to \GaugeHarm(\partial X)\)
	is an~isomorphism
	(cf. \eqref{eqn:gauge-restriction-ses}).
	Therefore for each element \(\pi_0 \Gauge(X)\)
	there exists exactly one representative \(u \in \GaugeHarm(X)\)
	such that \(u|_{\partial X} = 1\).
\end{proof}

\subsection{Seiberg\hyp{}Witten moduli in split Coulomb slice}
\label{sec:Seiberg-Witten-moduli-in-split-gauge}

We conclude this section by defining the \emph{Seiberg\hyp{}Witten moduli spaces},
the main object of study of this article.
We also prove they only depend on the choice of \(s_{\mathbb{Z}}\)
associated to \(s\) and \(\tau\).

Thanks to \autoref{lem:split-gauge-compatible},
we can define the following.
\begin{definition}
	[moduli spaces on \(4\)\hyp{}manifolds with boundary]
	\label{def:moduli-spaces-on-4-manifolds-with-boundary}
	We define the moduli spaces in split slice:
	\begin{equation*}
		\SWModuliFree{s}{X}
		=  \left\{ (A,\Phi) \in \CoulFour{s}{X}
			\middle| \SW(A,\Phi) = 0 \right\},
	\end{equation*}
	\begin{equation*}
		\SWModuli{s}{X}
		= \quotient{\SWModuliFree{s}{X}}{\GaugeSplit[s](X)}.
	\end{equation*}
\end{definition}
We also define a~version of the moduli space using the full double Coulomb slice,
\begin{equation*}
	\SWModuliFree{CC}{X}
	=  \left\{ (A,\Phi) \in \CoulFour{CC}{X}
	\middle| \SW(A,\Phi) = 0 \right\},
\end{equation*}
which will be utilized in some of the proofs.

From \autoref{lem:twisted-restriction-invariance} it follows that
\begin{corollary}
	There is a~well\hyp{}defined restriction map
	\begin{align*}
		R_\tau: \SWModuli{s}{X} & \to \CoulThree{\partial X}.
	\end{align*}
\end{corollary}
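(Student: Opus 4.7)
The plan is to observe that the restriction map $R_\tau$ has already been defined on the full double Coulomb slice $\CoulFour{CC}{X}$ in \autoref{def:twisted-restriction-map}, and that the split Coulomb slice sits inside it: $\CoulFour{s}{X} \subset \CoulFour{CC}{X}$ by \autoref{def:split-Coulomb-slice}. Therefore $R_\tau$ restricts to a well-defined (smooth) map from $\SWModuliFree{s}{X} \subset \CoulFour{s}{X}$ to $\CoulThree{\partial X}$. The only thing left to verify is that this map descends to the quotient by the action of $\GaugeSplit[s](X)$.

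For this I would simply invoke \autoref{lem:twisted-restriction-invariance}, which states precisely that $R_\tau(u(A,\Phi)) = R_\tau(A,\Phi)$ for any $u \in \GaugeSplit[s](X)$ and $(A,\Phi) \in \CoulFour{CC}{X}$. Since $\GaugeSplit[s](X)$ preserves $\SWModuliFree{s}{X}$ (by \autoref{lem:split-gauge-compatible} together with gauge-invariance of the Seiberg-Witten equations from \autoref{lem:gauge-action-4d}), and since $R_\tau$ is constant on its orbits, the map descends to the quotient $\SWModuli{s}{X} = \SWModuliFree{s}{X}/\GaugeSplit[s](X)$.

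There is no main obstacle here: the corollary is an immediate consequence of the invariance lemma, which was the substantive result. The proof should be no more than one or two sentences, invoking \autoref{lem:twisted-restriction-invariance} and noting that $R_\tau$ is therefore constant on $\GaugeSplit[s](X)$-orbits and so induces a map on the quotient.
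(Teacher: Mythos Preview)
Your proposal is correct and matches the paper's approach exactly: the paper simply states that the corollary follows from \autoref{lem:twisted-restriction-invariance}, and your elaboration---that $R_\tau$ is defined on $\CoulFour{CC}{X} \supset \CoulFour{s}{X}$ and descends to the quotient because it is constant on $\GaugeSplit[s](X)$-orbits---is precisely the content of that deduction.
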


A~direct consequence of \autoref{prop:twistings-are-integral-splittings}
is
\begin{corollary}
	[dependence on twistings]
	\label{cor:dependence-on-twistings}
	Given \(s\) and \(\tau\), there is an~integral splitting \(s_{\mathbb{Z}}\)
	and a~diffeomorphism
	\begin{equation*}
		F_{s,\tau} : \SWModuli{s}{X} \to \SWModuli{s_{\mathbb{Z}}}{X}
	\end{equation*}
	such that
	\(R \circ F_{s,\tau} = R_\tau\).
\end{corollary}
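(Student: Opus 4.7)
The plan is to simply transport the statement of \autoref{prop:twistings-are-integral-splittings} to the quotient level, using that the map \(F_{s,\tau} = F_{(\tilde{\tau})^{-1}}\) constructed there is built from a gauge transformation and is therefore compatible with both the Seiberg\hyp{}Witten equations and the residual gauge action.

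First, I would invoke \autoref{prop:twistings-are-integral-splittings} to produce the integral splitting \(s_{\mathbb{Z}}\) and the equivariant diffeomorphism \(F_{s,\tau} : \CoulFour{s}{X} \to \CoulFour{s_{\mathbb{Z}}}{X}\) satisfying \(R \circ F_{s,\tau} = R_\tau\) on the ambient Coulomb slices. Next, I would check that \(F_{s,\tau}\) preserves the set of Seiberg\hyp{}Witten solutions, so that it restricts to a diffeomorphism \(\SWModuliFree{s}{X} \to \SWModuliFree{s_{\mathbb{Z}}}{X}\). This is immediate because, by construction, \(F_{s,\tau}\) acts at each point by an element of \(\GaugeHarmId(X) \subset \Gauge(X)\), and \(\SW\) is invariant under the full gauge group by \autoref{lem:gauge-action-4d}.

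Second, I would pass to quotients by the split gauge groups. The equivariance statement in \autoref{prop:equivalence-of-slices} (applied with \(\nu = (\tilde{\tau})^{-1}\)) says exactly that \(F_{s,\tau}\) intertwines the action of \(\GaugeSplit[s](X)\) with that of \(\GaugeSplit[s_{\mathbb{Z}}](X)\) under the common identification with \(\pi_0\GaugeHarm(X)\). Therefore \(F_{s,\tau}\) descends to the claimed diffeomorphism \(F_{s,\tau} : \SWModuli{s}{X} \to \SWModuli{s_{\mathbb{Z}}}{X}\). Finally, the identity \(R \circ F_{s,\tau} = R_\tau\) of \autoref{prop:twistings-are-integral-splittings} descends to the same identity on the moduli spaces, noting that \(R_\tau\) is well\hyp{}defined on \(\SWModuli{s}{X}\) by \autoref{lem:twisted-restriction-invariance}, while \(R\) is well\hyp{}defined on \(\SWModuli{s_{\mathbb{Z}}}{X}\) precisely because \(s_{\mathbb{Z}}\) is integral (elements of \(\GaugeSplit[s_{\mathbb{Z}}](X)\) restrict to the identity on \(\partial X\)).

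There is no real obstacle here: every ingredient has already been assembled, and the statement is essentially a bookkeeping consequence of \autoref{prop:twistings-are-integral-splittings}. The only thing to be careful about is that both sides of \(R \circ F_{s,\tau} = R_\tau\) genuinely descend to the respective quotients, which is the content of the invariance lemmas cited above.
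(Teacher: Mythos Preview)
Your argument is correct and is exactly the expansion the paper has in mind: the paper states this corollary as a direct consequence of \autoref{prop:twistings-are-integral-splittings} without further proof, and the steps you spell out (preservation of \(\SW^{-1}(0)\) since \(F_{s,\tau}\) acts pointwise by elements of \(\GaugeHarmId(X)\), equivariance from \autoref{prop:equivalence-of-slices}, and descent of the restriction identity via \autoref{lem:twisted-restriction-invariance} and integrality of \(s_{\mathbb{Z}}\)) are precisely the unstated details.
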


\section{Properties of moduli spaces}
\label{sec:properties-of-moduli-spaces}

In this section we prove that
(\namedref{thm:semi-infinite-dimensionality-of-moduli-spaces}):
\begin{itemize}
	\item the moduli spaces
		of solutions to the Seiberg\hyp{}Witten equations on \(X\) are Hilbert manifolds,
	\item the restriction map to the boundary is ``semi\hyp{}infinite'',
		i.e., Fredholm in the negative direction and compact in the positive
		direction,
	\item if \(\partial X\) is disconnected, 
		restriction to a~single boundary component
		has dense differential.
\end{itemize}
This is done by analyzing the properties of the linearized Seiberg\hyp{}Witten
operator \(D\SW\).
We start by investigating an~extended version of this
operator, \(\widetilde{D\SW}\). 
The reason is that the standard
Atiyah\hyp{}Patodi\hyp{}Singer boundary value problem
as well as the elliptic theory developed in
\autoref{sec:analytical-preparation} can be directly applied to the study
of \(\widetilde{D\SW}\).
Our understanding of the gauge action  
(\autoref{sec:split-Coulomb-slice-on-4-manifolds})
will allow us to transfer these properties to \(D\SW\).

\subsection{Extended linearized SW operator}
\label{sec:properties-of-the-extended-linearized-SW-operator}
Here we apply the Atiyah\hyp{}Patodi\hyp{}Singer boundary value problem
to an~extended version of the linearized Seiberg\hyp{}Witten operator,
\(\widetilde{D\SW}\).
The properties we prove are the direct analogues of the properties
of \(D\SW\) which are proved in the next section.

\begin{definition}
	[extended linearized SW operator]
	\label{def:extended-DSW}
	We define the extended linearized Seiberg-Witten operator
	\begin{align*}
		\widetilde{D\SW}_{(A,\Phi)}
		:& \TangFour{X}\\
		\to & 
		L^2(X; i \mathbb{R}) \oplus \SWDomain{X} 
	\end{align*}
	by adding a~component related to the linearization of gauge action:
	\begin{equation*}
		\widetilde{D\SW}_{(A,\Phi)}(a,\phi)
		= (d^\ast a, D\SW_{(A,\Phi)}(a,\phi)).
	\end{equation*}
\end{definition}

In order to study the Atiyah\hyp{}Patodi\hyp{}Singer boundary value problem
we need to introduce the appropriate operator on the boundary
and consider its Calder\'on projector.
Denote \(Y = \partial X\)
and define 
\begin{align*}
	\tilde L : i \Omega^1(Y) \oplus \Gamma(S_Y) \oplus i \Omega^0(Y)
	\to& i \Omega^1(Y) \oplus \Gamma(S_Y) \oplus i \Omega^0(Y),
	\\
	\tilde L(b,\psi,c) =& (\star d b - d c, \Dirac_{B_0} \psi, - d^\ast b).
\end{align*}
This is a~first-order self-adjoint elliptic operator.
Denote by \(\widetilde{H}^+(Y,\mathfrak{s})\)
(resp. \(\widetilde{H}^-(Y,\mathfrak{s})\)
the closure of the span of positive (resp. nonpositive)
eigenspaces of \(\widetilde{L}\)
in \( L^2_{1/2}(i \Omega^1(Y) \oplus \Gamma(S_Y) \oplus i \Omega^0(Y))\),
and by \(\widetilde{\proj}^\pm\) the projection onto
\(\widetilde{H}^\pm(Y,\mathfrak{s})\) along \(\widetilde{H}^\mp(Y,\mathfrak{s})\).
The proof of the following proposition follows a~standard argument;
we briefly recall it to set up the stage for the proofs in the rest of this
section.
\begin{proposition}
	[semi-infinite-dimensionality of \(\widetilde{D\SW}\)]
	\label{prop:extended-DSW-semi-inf-dim}
	\hfill\\The operator
	\begin{align}
		\label{eqn:extended-DSW-APS}
		\widetilde{D\SW}_{(A,\Phi)} \oplus \widetilde{\proj}^- R
		:&\, \TangFour{X}  \\
		\nonumber
		\longrightarrow &\, L^2(X;i\mathbb{R}) \oplus \SWDomain{X} 
		\oplus \widetilde{H}^-(Y,\mathfrak{s})
	\end{align}
	is Fredholm of index
	\begin{equation}
		\label{eqn:extended-DSW-APS-index}
		2 \ind_{\mathbb{C}} \Dirac_{A_0} +\, b_1(X) - b^+(X) - b_1(Y) - 1.
	\end{equation}
	Moreover, the positive part of the restriction map
	from the kernel of \(\widetilde{D\SW}\),
	\( \widetilde{\proj}^+ R : \ker\mleft(\widetilde{D\SW}_{(A,\Phi)}\mright)
	\to \widetilde{H}^+(Y,\mathfrak{s}) \),
	is compact.
\end{proposition}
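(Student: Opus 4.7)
The plan is to split $\widetilde{D\SW}_{(A,\Phi)}$ as the sum of a cylindrical elliptic model $D_0$ and a compact perturbation $K$, apply standard Atiyah--Patodi--Singer theory to $D_0 \oplus \widetilde{\proj}^- R$, and transfer the conclusions through $K$.

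First, I would set $D_0(a,\phi) = (d^\ast a, d^+ a, \Dirac^+_{A_0}\phi)$, the linearization of $\widetilde{D\SW}$ at the reference $(A_0, 0)$, and let $K = \widetilde{D\SW}_{(A,\Phi)} - D_0$ collect the pointwise multiplication terms that depend on $A - A_0$ and $\Phi$. Since $X$ is $4$-dimensional, the Sobolev embedding $L^2_1 \hookrightarrow L^4$ and H\"older's inequality show that $K : L^2_1 \to L^2$ is bounded; Rellich--Kondrachov then gives compactness. The model $D_0$ is first-order elliptic and, on the collar of $\partial X = Y$, takes the cylindrical form $\partial_t + \tilde L$ after the usual bundle identifications.

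Second, I would invoke the standard APS theory for cylindrical first-order elliptic operators to conclude that $D_0 \oplus \widetilde{\proj}^- R$ is Fredholm, and then compute its index by decoupling the Dirac and gauge--curvature parts. The Dirac contribution is $2 \ind_{\mathbb{C}} \Dirac^+_{A_0}$. The gauge--curvature contribution, obtained by Hodge theory for the elliptic complex $\Omega^0 \xrightarrow{d} \Omega^1 \xrightarrow{d^+} \Omega^+$ with the appropriate APS boundary conditions, gives $b_1(X) - b^+(X) - b_1(Y) - 1$ once the harmonic forms on $X$ and $Y$ and the $-1$ from $H^0(X;\mathbb{R})$ have been accounted for. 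Adding the compact perturbation $K$ preserves both Fredholmness and the index, establishing the first claim.

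For the compactness of $\widetilde{\proj}^+ R$ on $\ker \widetilde{D\SW}_{(A,\Phi)}$, I would use the cylindrical Fourier expansion on the collar. Any $u$ in the kernel satisfies $\partial_t u + \tilde L u = -Ku$ there, so after decomposing along eigenspaces of $\tilde L$ the positive-eigenvalue modes obey first-order ODEs that integrate to
$$u_k(0) = -\int_0^T e^{-\lambda_k t}(Ku)_k(t)\,dt + e^{-\lambda_k T} u_k(T).$$
The exponential decay in the eigenvalues $\lambda_k$ and in $T$, combined with the $L^2$-boundedness of $K$, factor $\widetilde{\proj}^+ R$ through a Rellich-compact restriction to an interior slice, yielding the desired compactness. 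The main obstacle will be justifying the ODE decomposition and the integration by parts in our low-regularity setting, where $u \in L^2_1$ and the coefficients of $K$ themselves lie only in $L^2_1$; here the \namedref{thm:low-regularity} supplies the interior regularity needed to make the argument rigorous.
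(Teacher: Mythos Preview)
Your Fredholm argument matches the paper's: split $\widetilde{D\SW}_{(A,\Phi)} = \widetilde{D} + \widetilde{K}$ with $\widetilde{K}$ compact, apply APS to $\widetilde{D}$ (the paper cites \cite{Kha2015}*{Proposition 3.1} and \cite{KMbook}*{Theorem 17.1.3} rather than recomputing the index by hand), and note that compact perturbations preserve Fredholmness and index.

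For the compactness of $\widetilde{\proj}^+ R$ on the kernel, the paper takes a shorter route than your collar Fourier/ODE analysis. The APS package in \cite{KMbook}*{Theorem 17.1.3} already asserts that for any bounded sequence $(u_i)$ with $(\widetilde{D} u_i)$ Cauchy, the sequence $(\widetilde{\proj}^+ R u_i)$ is precompact. So if $(u_i)$ is bounded in $\ker \widetilde{D\SW}_{(A,\Phi)}$, then $\widetilde{D} u_i = -\widetilde{K} u_i$ has a convergent subsequence by compactness of $\widetilde{K}$, and the APS statement finishes. This operates entirely at the $L^2_1 \to L^2$ level, so your appeal to the \namedref{thm:low-regularity} is unnecessary here. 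Your explicit mode-by-mode argument is essentially a reproof of the relevant part of the APS theorem and would also go through, but the paper's route is cleaner and sidesteps the low-regularity bookkeeping you flag as the main obstacle.
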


\begin{proof}
	We can write
	\begin{math}
		\widetilde{D\SW}_{(A,\Phi)} = \widetilde{D} + \widetilde{K}
	\end{math}
	where
	\begin{equation*}
		\widetilde{D}(a,\phi) = (d^+ a, \Dirac_{A_0} \phi, d^\ast a)
	\end{equation*}
	and
	\begin{equation*}
		\widetilde{K}(a,\phi) = (0, -\rho^{-1}(\phi \Phi^\ast + \Phi \phi^\ast)_0,
		\rho(a) \Phi + \rho(\diff{A}) \phi).
	\end{equation*}
	As explained in \cite{Kha2015}*{Proposition 3.1},
	applying the Atiyah-Patodi-Singer boundary value problem
	\cite{KMbook}*{Theorem 17.1.3}
	to \(\widetilde{D}\) proves that
	\(\widetilde{D}\oplus \widetilde{\proj}R\)
	is Fredholm with index equal to \eqref{eqn:extended-DSW-APS-index}.
	Furthermore, 
	\cite{KMbook}*{Theorem 17.1.3} implies that
	for any bounded sequence \((u_i) \subset \TangFour{X}\)
	such that \( (\widetilde{D}(u_i)) \) is Cauchy,
	the sequence \( (\widetilde{\proj}^+R u_i) \) is precompact.

	The operator \(\widetilde{K}\) is compact by \autoref{thm:multiplication}.
	Since \(\widetilde{D\SW}_{(A,\Phi)} = \widetilde{D} + \widetilde{K}\),
	thus \(\widetilde{D\SW}_{(A,\Phi)}\) is Fredholm with the same index 
	as \(\widetilde{D}\).
	Moreover, since \(\widetilde{K}\) is compact,
	for any sequence \( (u_i) \subset \ker
	\mleft(\widetilde{D\SW}_{(A,\Phi)}\mright)\)
	we can choose a~subsequence such that
	the sequence of \( \widetilde{D}(u_i) = - \widetilde{K}(u_i)\)
	is convergent, thus Cauchy.
	By what we proved in the previous paragraph,
	the sequence \( (\widetilde{\proj}^+R u_i) \) is precompact.
	This shows that
	\( \widetilde{\proj}^+ R : \ker\mleft(\widetilde{D\SW}_{(A,\Phi)}\mright)
	\to \widetilde{H}^+(Y,\mathfrak{s}) \)
	is compact.
\end{proof}

The proof of surjectivity 
utilizes both of the analytical results of
\autoref{sec:analytical-preparation} 
(cf. \cite{Lip2008}*{Theorem \(2\)}).
\begin{proposition}
	[surjectivity of \(\widetilde{D\SW}\)]
	\label{prop:extended-DSW-surjective}
	The operator 
	\begin{math} \widetilde{D\SW}_{(A,\Phi)} \end{math}
	is surjective.
\end{proposition}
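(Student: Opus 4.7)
The plan is to show that any $g = (g_0, g_+, g_-) \in L^2(X; i\mathbb{R}) \oplus \SWDomain{X}$ which is $L^2$-orthogonal to $\im \widetilde{D\SW}_{(A,\Phi)}$ vanishes identically. Decompose $\widetilde{D\SW} = \widetilde{D} + \widetilde{K}$ as in the proof of \autoref{prop:extended-DSW-semi-inf-dim}, with $\widetilde{D}$ a first-order formally self-adjoint elliptic operator and $\widetilde{K}$ a compact multiplication operator. Its formal adjoint then decomposes as $\widetilde{D\SW}^{*} = \widetilde{D}^{*} + \widetilde{K}^{*}$, where $\widetilde{K}^{*}$ is again compact $L^2_1 \to L^2$ by \autoref{thm:multiplication}.

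First I would establish interior regularity of $g$. Testing the orthogonality $\langle g, \widetilde{D\SW}(a, \phi)\rangle = 0$ against $(a, \phi) \in C^\infty_c(\mathring{X})$ shows that $g$ solves $\widetilde{D\SW}^{*} g = 0$ weakly on the interior. Applying the \namedref{thm:low-regularity} to this equation upgrades $g$ to $L^2_{1, \mathrm{loc}}(X)$.

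Next I would extract the boundary condition $g|_{\partial X} = 0$. With $L^2_{1, \mathrm{loc}}$ regularity in hand, Green's identity becomes legitimate for arbitrary test sections $(a, \phi) \in \TangFour{X}$ and takes the form
\begin{equation*}
	\langle g, \widetilde{D\SW}(a, \phi)\rangle_X - \langle \widetilde{D\SW}^{*} g, (a, \phi)\rangle_X
	= \langle g|_{\partial X},\; \sigma(\nu)\, (a, \phi)|_{\partial X}\rangle_{\partial X},
\end{equation*}
where $\sigma(\nu)$ denotes the principal symbol of $\widetilde{D}$ in the outward normal direction; the order-zero part $\widetilde{K}$ contributes no boundary term. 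Both summands on the left vanish---the first by the orthogonality hypothesis, the second by the previous step---so the boundary pairing is zero for every trace $(a, \phi)|_{\partial X}$. Combining surjectivity of the trace map $L^2_1(X) \to L^2_{1/2}(\partial X)$ with pointwise invertibility of $\sigma(\nu)$ then forces $g|_{\partial X} = 0$.

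Finally I would invoke unique continuation. Attach a collar $(-\epsilon, 0] \times \partial X$ to $X$, extend the operators cylindrically, and extend $g$ by zero across $\partial X$; since $g|_{\partial X} = 0$, the extended section $\widetilde{g}$ is in $L^2_{1, \mathrm{loc}}$ on the enlarged manifold, vanishes on the open set $(-\epsilon, 0) \times \partial X$, and solves the extended adjoint equation globally. The spinor component satisfies a Dirac equation $\Dirac_{A_0} \widetilde{g}_{-} + V \widetilde{g}_{-} = 0$ with $V$ linear in the $L^2_1$-configuration $(A - A_0, \Phi)$, to which \autoref{cor:unique-cont-dirac-4d} applies; the $(0,+)$-form component satisfies a $d + d^{*}$-type equation with $L^2_1$ potential, handled by \autoref{thm:wolff-ucp}. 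Each gives $\widetilde{g} \equiv 0$, whence $g = 0$ as desired. The main obstacle is Step~2: justifying the Green's formula between a single $L^2_{1, \mathrm{loc}}$ section and an arbitrary $L^2_1$ test section, and confirming that the resulting boundary trace of $g$ is strong enough (in the $L^2_{1/2}$ sense) so that the extension-by-zero in Step~3 genuinely produces an $L^2_{1, \mathrm{loc}}$ solution on the enlarged manifold.
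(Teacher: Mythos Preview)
Your overall strategy is correct and close to the paper's, but the obstacle you flag in Step~2 is real and the paper's proof shows how to bypass it entirely rather than confront it. You first upgrade $g$ to $L^2_{1,\mathrm{loc}}$ on the interior and then try to extract a boundary trace via Green's identity; but $L^2_{1,\mathrm{loc}}(\mathring{X})$ gives no control near $\partial X$, so the trace $g|_{\partial X}$ is not a~priori defined and the pairing you write down has no meaning. Without this, your extension-by-zero in Step~3 need not land in $L^2_{1,\mathrm{loc}}$ either.

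The paper avoids this by reversing the order of operations. Attach the \emph{infinite} cylindrical end $[0,\infty)\times\partial X$ to obtain $X^\ast$, extend $g$ by zero across $\partial X$ as a mere $L^2$ section (no regularity required for this), and extend the coefficients $(A-A_0,\Phi)$ arbitrarily. The crucial observation is that the zero-extension $g^\ast$ is automatically a weak solution of $(\widetilde D^\ast+\widetilde K^\ast)g^\ast=0$ on all of $X^\ast$: for any test section $w$ on $X^\ast$ one has
\[
\langle g^\ast,(\widetilde D+\widetilde K)w\rangle_{L^2(X^\ast)}
=\langle g,\widetilde{D\SW}_{(A,\Phi)}(w|_X)\rangle_{L^2(X)}=0,
\]
simply because $g^\ast$ vanishes on the end and $w|_X$ lies in the domain on $X$. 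No integration by parts, no boundary term, no trace of $g$ enters. Now the \namedref{thm:low-regularity} applies on the boundaryless manifold $X^\ast$ and gives $g^\ast\in L^2_1(X^\ast)$ globally; since $g^\ast$ vanishes on the open cylinder, a single application of \autoref{cor:unique-cont-dirac-4d} (the adjoint operator is again Dirac-type with an $L^2_1$ potential, so there is no need to split into spinor and form components) yields $g^\ast\equiv 0$.
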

\vphantom{.}\\
\vspace{-2em}
\begin{proof}
	Assume
	\begin{math} \widetilde{D\SW}_{(A,\Phi)} \end{math}
	is not surjective.
	\autoref{prop:extended-DSW-semi-inf-dim} implies
	its image is closed,
	so there is \(0 \neq \tilde v \in \mathcal{V}(X,\hat{\mathfrak{s}})
	\oplus L^2(i \Omega^0(X))\)
	orthogonal to \(\im \widetilde{D \SW}\).
	Recall that \(\tilde K\) is a~certain multiplication
	by \( p = (A - A_0, \Phi) \in L_1^2( i \Omega^1(X) \oplus \Gamma(S^+_X)) \).
	Let \(X^\ast = X \cup ( [0,\infty) \times Y )\)
	with cylindrical metric on the end,
	and extend the spinor bundle \(S_X\)
	to \(S_{X^\ast}\) which is cylindrical on ends.
	Extend \(p\) to 
	\(p^\ast \in L^2_1( i \Omega^1(X^\ast) \oplus \Gamma(S^+_{X^\ast}))\) 
	in an~arbitrary way
	and \(\tilde v\) to 
	\(\tilde v^\ast \in \mathcal{V}(X^\ast,\hat{\mathfrak{s}})
	\oplus L^2(i \Omega^0(X^\ast))\)
	by zero on \( [0,\infty) \times Y\).
	We have
	\begin{equation*}
		\langle \tilde v^\ast, (\tilde D + \tilde K) (w)
		\rangle_{L^2(X^\ast)}
		= 
		\langle \tilde v, \widetilde{D\SW}_{(A,\Phi)}(w|_X)
		\rangle_{L^2(X)}
		= 0
	\end{equation*}
	for any \(w \in \mathcal{TC}(X^\ast, \hat{\mathfrak{s}})\).
	Therefore \(\tilde v^\ast\) is a~weak solution to
	\( \tilde D^\ast \tilde v^\ast + \tilde K^\ast \tilde v^\ast = 0\)
	where \(\tilde D^\ast, \tilde K^\ast\) are formal adjoints
	of \(\tilde D, \tilde K\), respectively.
	The map \(\tilde K^\ast:L^2_1\to L^2\) is compact by 
	\autoref{thm:multiplication}.
	Thus from the \namedref{thm:low-regularity} it follows that
	\(\tilde v^\ast \in L^2_1(i \Omega^+(X^\ast) \oplus \Gamma(S^-_{X^\ast}) 
	\oplus i \Omega^0(X^\ast)) \)
	and it is a~solution to 
	\( (\tilde D^\ast + \tilde K^\ast) \tilde v^\ast = 0 \).
	Furthermore, \autoref{cor:unique-cont-dirac-4d}
	implies that 
	\(\tilde v^\ast = 0\) and therefore \(v = 0\).
	Thus, by contradiction, we have proved that \(D \SW\)
	is surjective.
\end{proof}

Finally, we focus on the density of the restriction map
from the kernel of \(\widetilde{D\SW}\) to one boundary component.
The proof of this proposition utilizes some of the ideas 
we have just seen (cf. \cite{Lip2008}*{Lemma 5}).
\begin{proposition}
	[density of moduli on one boundary component]
	\label{prop:extended-DSW-moduli-boundary-dense}
	Assume the boundary \(Y\)
	has at least two connected components
	and let \(Y_0 \subset Y\) be any one of these components.
	Then the restriction
	\begin{equation*}
		R : \ker\widetilde{D\SW}_{(A,\Phi)} \to \TangThree{Y_0}
	\end{equation*}
	is dense.
\end{proposition}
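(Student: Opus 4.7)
The plan is to argue by contradiction, in parallel with the surjectivity proof of \autoref{prop:extended-DSW-surjective}. Suppose the image of \(R\) on \(\ker \widetilde{D\SW}_{(A,\Phi)}\) is not dense in \(\TangThree{Y_0}\). By Hahn--Banach and the self-duality of the Hilbert space \(\TangThree{Y_0}\), there exists a nonzero \(\xi \in \TangThree{Y_0}\) with \(\langle Ru|_{Y_0}, \xi \rangle = 0\) for every \(u \in \ker \widetilde{D\SW}_{(A,\Phi)}\). The goal is to promote \(\xi\) into a global weak solution of the formal adjoint equation on an auxiliary manifold that vanishes on an open set, so that unique continuation forces \(\xi = 0\).

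To construct this auxiliary solution, I would attach a half-cylinder \([0,\infty) \times Y_0\) externally to \(X\) along \(Y_0\), obtaining \(X^\ast\) whose remaining boundary is \(\partial X \setminus Y_0\), which is still nonempty by hypothesis. Cylindrically extend the metric, \spinc{} structure, reference connection and configuration \((A,\Phi)\) to \(X^\ast\); this yields an operator \(\widetilde{D\SW}^\ast_{(A,\Phi)}\) of the same form \(\widetilde D + \widetilde K\) as before, with \(\widetilde K\) compact via the multiplication theorem. Using Green's formula for \(\widetilde{D\SW}\) on \(X\) (whose boundary pairing is given by the principal symbol in the normal direction and equals, component by component, the \(L^2\)-inner product on each \(Y_i\)), the orthogonality of \(\xi\) against \(R(\ker\widetilde{D\SW})\) combined with the surjectivity of \(\widetilde{D\SW}\) translates into the existence of a weak solution \(v \in L^2_{loc}(X^\ast)\) of \((\widetilde{D\SW}_{(A,\Phi)})^\ast v = 0\) which on the attached cylinder is the explicit translation-invariant extension of \(\xi\), and which vanishes identically in a collar neighborhood of some other boundary component \(Y_1 \subset \partial X \setminus Y_0\) (extend by zero through \(Y_1\) into a slightly enlarged manifold).

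With \(v\) in hand, the \namedref{thm:low-regularity} applied to the formal adjoint \(\widetilde D^\ast + \widetilde K^\ast\) upgrades \(v\) to an element of \(L^2_{1,loc}\). Since \(v\) then vanishes on an open set inside a connected manifold and satisfies a first-order equation of Dirac type, \autoref{cor:unique-cont-dirac-4d} (applied to the spinor component, together with the analogous \(d + d^\ast\) unique continuation for the \(1\)-form and \(0\)-form components via the Clifford-algebra identification used in the proof of the \namedref{thm:unique-cont-dirac-Lr}) forces \(v \equiv 0\) on \(X^\ast\). But then the cylindrical boundary value of \(v\) on \(\{0\}\times Y_0\) is zero, contradicting \(\xi \neq 0\).

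The main obstacle is the careful passage from the functional-analytic orthogonality of \(\xi\) to an honest global weak solution \(v\) on \(X^\ast\) with the correct cylindrical behavior and the vanishing on a collar of \(Y_1\); this uses surjectivity of \(\widetilde{D\SW}\) to produce test vectors \(u\) with arbitrary \(\widetilde{D\SW} u\), a Green's identity tracked separately on each component of \(\partial X\), and explicit solutions on the half-cylinder in terms of the APS polarization of \(\TangThree{Y_0}\). A secondary technical point is ensuring that the extension of \(v\) across \(Y_1\) is still a weak solution of the adjoint equation (so that \emph{strong} unique continuation, rather than a merely interior statement, applies); this uses the vanishing of \(v\) on a two-sided collar of \(Y_1\) after the extension.
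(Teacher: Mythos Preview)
Your strategy is in the right spirit but has a genuine gap at exactly the point you flag as the ``main obstacle'': you do not actually construct the auxiliary weak solution \(v\). Orthogonality of \(\xi\) to \(R(\ker\widetilde{D\SW})\), together with surjectivity of \(\widetilde{D\SW}\) and Green's formula, does \emph{not} by itself produce an interior section \(v\) on \(X\) solving \((\widetilde{D\SW})^\ast v = 0\) with prescribed trace \(\xi\) on \(Y_0\) and vanishing trace on \(Y_1\); that is a boundary value problem you have not solved. Your claim that on the attached half-cylinder \(v\) is ``the explicit translation-invariant extension of \(\xi\)'' is also incorrect: a \(t\)-independent section solves the cylindrical adjoint equation only when \(\xi\) lies in the kernel of the boundary operator \(\widetilde L\), which you cannot assume. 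Likewise, nothing in your argument forces \(v\) to vanish on a collar of \(Y_1\); you simply assert it.

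The paper avoids all of this by a different device. Rather than trying to extend \(\xi\) into the interior, it augments the operator to \(\widetilde{D\SW}_{(A,\Phi)} \oplus \Pi_\xi R\) (with \(\Pi_\xi\) the projection onto the line \(\mathbb{C}\xi\)), observes this has closed range of finite codimension but is not onto, and takes an element \((a,\xi)\) orthogonal to its image. The component \(a\) already lives on \(X\), and one extends \(a\) by zero on cylinders attached along \emph{all} of \(Y\). Orthogonality to \(\im(\widetilde{D\SW}\oplus\Pi_\xi R)\) then says, for test sections supported away from the \(Y_0\)-cylinder, that \(a\) is a weak adjoint solution there; the \namedref{thm:low-regularity} and \autoref{cor:unique-cont-dirac-4d} (using the vanishing of \(a\) on the \((Y\setminus Y_0)\)-cylinder, which is by construction) give \(a=0\) on \(X\). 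Testing the orthogonality against any extension \(\tilde v\) with \(R\tilde v = \xi\) then yields \(\|\xi\|^2 = 0\), the contradiction. The point is that one never needs a solution with prescribed boundary data: one lets Fredholm theory hand you the interior object \(a\) directly.
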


\begin{proof}
	Assume, by contradiction,
	that it is not dense and choose a~nonzero element
	\(v \in \TangThree{Y_0}\)
	which is \(L^2_{1/2}\)\hyp{}perpendicular to its image.
	Since \(\widetilde{D\SW}_{(A,\Phi)}\)
	is surjective by
	\autoref{prop:extended-DSW-surjective},
	the map
	\begin{align*}
		\widetilde{D\SW}_{(A,\Phi)} \oplus \Pi_v R :& 
		\TangFour{X} \\
		\to& L^2(X;i\mathbb{R}) \oplus \SWDomain{X}
		\oplus \mathbb{C} v
	\end{align*}
	has finite\hyp{}dimensional cokernel,
	where \(\Pi_v\) is the \(L^2_{1/2}\)\hyp{}projection
	onto \(v \in \TangThree{Y_0}\).

	From the definition of \(v\) it follows that
	\( \widetilde{D\SW}_{(A,\Phi)} \oplus \Pi_v R \)
	is not surjective, since otherwise there would be
	an~element \(w \in \TangFour{X}\)
	which solves \(\widetilde{D\SW}_{(A,\Phi)}(w) = 0\)
	such that \(\Pi_v(R(w)) \neq 0\).
	Therefore, we can pick an~element \( (a,v) \) 
	which is orthogonal to the image of 
	\( \widetilde{D\SW}_{(A,\Phi)} \oplus \Pi_v R \),
	where \(a \in L^2(X;i\mathbb{R}) \oplus \SWDomain{X}\).

	As in the proof of \autoref{prop:extended-DSW-surjective}, 
	attach a~cylindrical end along \(Y\) 
	to get \(X^\ast = X \cup [0,\infty) \times Y \).
	Extend \(a\) to \(a^\ast\) by \(0\) on \( [0,\infty) \times Y \).
	Since \(\tilde K\) is a~certain multiplication by
	\( p = (A - A_0,\Phi) \in L^2_1( i \Omega^1(X) \oplus \Gamma(S^+_X) )\),
	extend \(p\) to 
	\(p^\ast \in L^2_1(i \Omega^1(X^\ast) \oplus
	\Gamma(S^+_{X^\ast}))\) in an~arbitrary way to get 
	\(\tilde K\) defined on \(X^\ast\).
	By \autoref{thm:multiplication},
	\(\widetilde{K}\) is a~compact operator \(L^2_1 \to L^2\).

	Since \((a,v) \perp \im \mleft(\widetilde{D\SW}_{(A,\Phi)}
	\oplus \proj_v R \mright)\),
	we get that \(a^\ast\) is a~weak solution to
	\((\tilde D^\ast + \tilde K^\ast) a^\ast = 0\)
	on the interior of \(X^\ast_1 = X^\ast \setminus ([0,\infty) \times Y_0)\).
	Take any compact set \(C \subset X^\ast_1\)
	and a~smooth bump function \(\eta : X^\ast \to [0,1]\)
	such that \(\eta|_C = 1\)
	and \(\supp \eta \subset X^\ast_1\).
	It follows that \( (D^\ast+K^\ast)(\eta a) = \rho^\ast(d \eta) a \in
	L^2(X^\ast) \).
	Therefore by the \namedref{thm:low-regularity}
	we get that \(\eta a \in L^2_1(X^\ast)\),
	and in particular \(a \in L^2_1(\mathring{C})\).
	Varying \(C\) we obtain
	\(a^\ast \in L^2_{1,loc}(X^\ast_1)\).
	Since \(a^\ast \equiv 0\)
	on \([0,\infty) \times (Y \setminus Y_0) \subset X_1^\ast\),
	\autoref{cor:unique-cont-dirac-4d}
	implies that \(a^\ast = 0\) on \(X^\ast_1\),
	so \(a=0\) on \(X\).

	This is a~contradiction since we can extend
	\(v \in \TangThree{Y_0}\)
	to \(\tilde v \in \TangFour{X}\) 
	such that \(R(\tilde v) = v\).
	Then
	\begin{align*}
		0 =& 
		\langle (\widetilde{D\SW}_{(A,\Phi)}(\tilde v), \Pi_v R \tilde v),
		(a, v) \rangle_{L^2_{1/2}}
		\\ =& 
		\langle (\widetilde{D\SW}_{(A,\Phi)}(\tilde v), v),
		(0, v) \rangle_{L^2_{1/2}}
		\\ =&
		0 + \langle v,v\rangle_{L^2_{1/2}}
	\end{align*}
	implying \( \lVert v\rVert_{L^2_{1/2}} = 0\), 
	which contradicts
	the assumption that \(v \neq 0\).
\end{proof}

\subsection{Regularity and semi-infinite-dimensionality of moduli spaces}
\label{sec:regularity-and-semi-infinite-dimensionality-of-moduli-spaces}
We turn our focus to the operator \(D\SW\).
The main difficulty in transferring the results of 
\autoref{sec:properties-of-the-extended-linearized-SW-operator}
from \(\widetilde{D\SW}\) to \(D\SW\)
is the presence of the Coulomb condition
on both \(X\) and \(\partial X\).
The split gauge condition introduces an~additional twist to the story.

A~key fact is that the differential of the gauge group action at \(e\)
(cf. \autoref{lem:gauge-action-4d})
preserves the kernel of \(D\SW\)
at a~solution \( (A,\Phi) \).
\begin{lemma}
	\label{lem:gauge-algebra-action}
	Assume \(\Dirac_A \Phi = 0\).
	Then for any \(f \in L^2_2(i \Omega^0(X))\) we have
	\( D\SW_{(A,\Phi)}(df,-f \Phi) = 0\).
\end{lemma}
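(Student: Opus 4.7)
The plan is to verify the identity by direct computation, since this lemma is simply the infinitesimal version of gauge invariance of the Seiberg\hyp{}Witten equations at a solution. The vector \((df, -f\Phi)\) is (up to a sign) the differential of the gauge orbit through \((A,\Phi)\) at the identity, as recorded in \autoref{lem:gauge-action-4d}: the tangent to \(t \mapsto e^{tf}(A,\Phi)\) at \(t=0\) is \((-df, f\Phi)\). Since \(D\SW_{(A,\Phi)}\) is linear, it suffices to use the explicit formula for the differential given in the paper and check that both components vanish.

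For the \emph{curvature component}, I would substitute \(a = df\) and \(\phi = -f\Phi\) to obtain
\begin{equation*}
d^+(df) - \rho^{-1}\bigl((-f\Phi)\Phi^\ast + \Phi(-f\Phi)^\ast\bigr)_0.
\end{equation*}
The first term vanishes by \(d \circ d = 0\). For the second, the key point is that \(f\) is imaginary\hyp{}valued, so \(\bar f = -f\) and \((-f\Phi)^\ast = -\bar f \Phi^\ast = f\Phi^\ast\); the two terms in the bilinear expression then cancel identically, giving zero.

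For the \emph{Dirac component}, the expression is
\begin{equation*}
\Dirac^+_{A_0}(-f\Phi) + \rho(df)\Phi + \rho(\diff{A})(-f\Phi).
\end{equation*}
Applying the Leibniz rule \(\Dirac_{A_0}(f\Phi) = \rho(df)\Phi + f \Dirac_{A_0}\Phi\) to the first summand produces a term \(-\rho(df)\Phi\) which cancels the \(\rho(df)\Phi\) from the variation of the gauge term, leaving
\begin{equation*}
-f\Dirac_{A_0}\Phi - f\rho(\diff{A})\Phi = -f\bigl(\Dirac_{A_0} + \rho(\diff{A})\bigr)\Phi = -f \Dirac_A \Phi,
\end{equation*}
which is zero by the hypothesis \(\Dirac_A\Phi = 0\).

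There is no real obstacle here: the calculation is entirely algebraic and uses only the product rule for \(\Dirac_{A_0}\), the relation \(\Dirac_A = \Dirac_{A_0} + \rho(\diff{A})\), the anti\hyp{}Hermiticity of multiplication by an imaginary function, and \(d^2 = 0\). The only mild care needed is to keep track of the two appearances of \(\rho(df)\Phi\)—one from the variation of the connection in the gauge term, one from the Leibniz rule applied to \(\Dirac^+_{A_0}(-f\Phi)\)—and note that they appear with opposite signs, as they must if the identity is to hold.
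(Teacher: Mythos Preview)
Your proof is correct and follows essentially the same direct computation as the paper: substitute \((a,\phi)=(df,-f\Phi)\) into the explicit formula for \(D_{(A,\Phi)}\SW\), use \(d^+d=0\) and the fact that \(f\) is imaginary to kill the curvature component, and apply the Leibniz rule together with \(\Dirac_A=\Dirac_{A_0}+\rho(\diff A)\) to reduce the Dirac component to \(-f\Dirac_A\Phi=0\). The paper's argument is the same calculation, written slightly more tersely.
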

\begin{proof}
	We compute:
	\begin{align*}
		(\hat D + \hat K)&(df,-f \Phi)
		\\
		=& ( \rho^{-1}( f \Phi \Phi^\ast + \Phi (f \Phi)^\ast)_0,
		\rho(df) \Phi - \Dirac_{A_0} (f \Phi) - \rho(\diff{A}) f \Phi)
		\\ =&
		( \rho^{-1}( f \Phi \Phi^\ast - \Phi f \Phi^\ast)_0,
		\rho(df) \Phi - \rho(df) \Phi - f(\Dirac_{A_0}\Phi + \rho(\diff{A})\Phi) )
		\\ =&
		(0, - f \Dirac_A \Phi)
		\\ =& (0,0),
	\end{align*}
	which finishes the proof.
\end{proof}

Following the idea of \cite{Kha2015}*{Proposition 3.1},
we deduce the semi\hyp{}infinite\hyp{}dimensionality
and compute the index of \(D\SW\) from
\autoref{prop:extended-DSW-semi-inf-dim}.
These methods will be utilized to prove further results in this section,
too.

\begin{proposition}
	[semi-infinite-dimensionality of \(D\SW\)]
	\label{prop:DSW-semi-inf-dim}
	The operator
	\begin{align}
		\label{eqn:DSW-APS}
		D_{(A,\Phi)}\SW \oplus \proj^- R
		:& \TangCoulFour[s]{X} \\
		\nonumber
		& \to \SWDomain{X} \oplus H^-(Y,\mathfrak{s})
	\end{align}
	is Fredholm of index
	\begin{equation}
		\label{eqn:DSW-APS-index}
		2 \ind_{\mathbb{C}} + b_1(X) - b^+(X) - b_1(Y).
	\end{equation}
	Moreover, the restriction
	\( \proj^+ R : \ker\mleft(D_{(A,\Phi)}\SW\mright)
	\to H^+(Y,\mathfrak{s}) \)
	is compact, where 
	\( \ker\mleft(D_{(A,\Phi)}\SW\mright) \subset \TangCoulFour[s]{X} \).
\end{proposition}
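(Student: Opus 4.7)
The strategy is to deduce this proposition from Proposition~\ref{prop:extended-DSW-semi-inf-dim} (the analogous statement for $\widetilde{D\SW}$) using the split Coulomb decomposition together with the infinitesimal gauge invariance at a solution (Lemma~\ref{lem:gauge-algebra-action}), following the spirit of \cite{Kha2015}*{Proposition 3.1}.

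The first step is to exploit the decomposition $\Omega^1(X) = \Omega^1_s(X) \oplus d(\Omega^0_0(X))$ furnished by Lemma~\ref{lem:split-gauge-fixing-in-4d} to obtain a bounded isomorphism
\[
\Xi \colon T_{(A,\Phi)} \CoulFour[s]{X} \oplus L^2_2(i\Omega^0_0(X)) \xrightarrow{\ \cong\ } T_{(A,\Phi)} \ConfFour{X}, \qquad ((a,\phi),f) \longmapsto (a + df,\ \phi - f\Phi).
\]
Since $d^\ast a = 0$ on the slice and Lemma~\ref{lem:gauge-algebra-action} gives $D\SW(df,-f\Phi) = -f\,\Dirac_A\Phi$ (vanishing at a solution, compact in $f$ otherwise), the composition $\widetilde{D\SW}\circ\Xi$ becomes, modulo compact operators, the direct sum $\Delta \oplus D\SW$. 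A parallel Hodge decomposition on $\partial X$, valid since $b_1(\partial X) = 0$, gives $\Omega^1(\partial X) = \Omega^1_{cC}(\partial X) \oplus d(\Omega^0(\partial X))$ and splits $\widetilde{L}$ into the Coulomb operator $(\star d) \oplus \Dirac_{B_0}$ and a coupled operator on $d(\Omega^0(\partial X)) \oplus \Omega^0(\partial X)$. This splits $\widetilde{H}^\pm(\partial X,\mathfrak{s})$ as $H^\pm(\partial X,\mathfrak{s})$ plus a complementary ``gauge'' piece, and the boundary restriction $R\circ\Xi$ respects this splitting up to a compact spinor cross-term $f \mapsto R(f)\,\Phi|_{\partial X}$.

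With these decouplings in place, the extended APS operator \eqref{eqn:extended-DSW-APS} becomes a compact perturbation of the direct sum of the desired slice operator \eqref{eqn:DSW-APS} and an APS-type Laplacian operator on $L^2_2(i\Omega^0_0(X))$ with boundary data in the ``gauge'' subspace. By invariance of Fredholm index under compact perturbation and additivity under direct sums, the slice operator's index equals \eqref{eqn:extended-DSW-APS-index} minus the index of the Laplacian piece. A direct Hodge-theoretic count---using the zero-integral constraint on $\Omega^0_0(X)$, the dimension $b_0(\partial X)$ of boundary components, and the specified APS boundary projection---gives the Laplacian piece index $-1$, yielding precisely the claimed index \eqref{eqn:DSW-APS-index}. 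For the compactness of $\proj^+R$ on $\ker D_{(A,\Phi)}\SW$: under $\Xi$, elements of $\ker D_{(A,\Phi)}\SW$ embed into $\ker \widetilde{D\SW}_{(A,\Phi)}$, and $\proj^+R$ coincides with a component of $\widetilde{\proj}^+R$ (up to the compact cross-term), so its compactness transfers from Proposition~\ref{prop:extended-DSW-semi-inf-dim}.

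The main obstacle will be checking that all the cross-terms arising from the decoupling---in particular the boundary spinor contribution $f \mapsto R(f)\,\Phi|_{\partial X}$ and the possible non-solution contribution $-f\,\Dirac_A\Phi$ in the bulk---are indeed compact in the appropriate Sobolev spaces so that the Fredholm indices are unaffected. The Hodge-theoretic computation showing the Laplacian piece contributes index exactly $-1$ requires careful bookkeeping in the presence of multiple boundary components and the zero-integral normalization on $\Omega^0_0(X)$, but is otherwise routine.
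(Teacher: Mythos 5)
Your overall strategy — reduce to the extended operator $\widetilde{D\SW}_{(A,\Phi)}$ and compare — is the same as the paper's, but your mechanism (conjugating by an explicit isomorphism $\Xi$ and computing the index by block decomposition) differs from the paper's, which instead adds a modified boundary condition $\proj_C R$ to the extended operator, invokes Khandhawit's observation that $\im \proj_1^-$ and $\ker \proj_C$ are complementary, and then uses the elementary Fredholm ``pullback'' Lemma~\ref{lem:Fredholm-pullback} to restrict the Fredholm map to $\TangCoulFour[s_\perp]{X} = \ker(d^\ast, \proj_C R)$; the case of general $s$ is then handled by a finite-codimension argument. Your compactness argument is essentially the paper's.

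There are, however, genuine gaps in your plan. First, the map $\Xi$ is not bounded: the spinor-twist term $f \mapsto -f\Phi$ is multiplication $L^2_2(X) \times L^2_1(X) \to L^2_1(X)$ on a four-manifold, which is exactly the failing borderline case of the Sobolev multiplication theorem (\autoref{thm:multiplication}) — this is the same reason the paper states that the full gauge group does not act continuously on $\ConfFour{X}$. Dropping the spinor twist and using $((a,\phi),f) \mapsto (a+df,\phi)$ would give a bounded isomorphism, at the cost of an extra cross-term $D\SW(df,0) = (0,\rho(df)\Phi)$ in the bulk, which is compact; so this part is salvageable with a small repair.

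Second, and more seriously, the operator is \emph{not} block-triangular modulo compact under your decomposition. The ``gauge'' component of the boundary restriction, namely the $i\Omega^0(Y)$ factor $\iota_\nu a + \partial_\nu f$, depends on the slice variable $a$ through the trace $\iota_\nu a$, and the trace map $L^2_1(X) \to L^2_{1/2}(Y)$ is bounded but \emph{not} compact. So there is a non-compact off-diagonal block from the slice inputs to the gauge boundary outputs, and ``index additivity under compact perturbation of a block direct sum'' does not apply in the form you invoke it. A Schur-complement argument can still in principle be run, but then one needs to carefully understand invertibility/index of the gauge block under the specific APS boundary projection $\proj_1^-$ — and in fact your unexplained claim that this block has index exactly $-1$ is precisely where the content lies (note that if that block were invertible, the Schur-complement argument would give $\ind D\SW = \ind \widetilde{D\SW}$ with no $+1$ shift, contradicting \eqref{eqn:DSW-APS-index} versus \eqref{eqn:extended-DSW-APS-index}). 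This is the same combinatorics that the paper hides inside Khandhawit's complementarity observation together with \cite{KMbook}*{Proposition 17.2.6}, neither of which appears in your argument. Finally, you work throughout with the general splitting $s$; the paper first establishes the result for the orthogonal splitting $s = s_\perp$ (where the $\proj_C R$ comparison is available) and only afterward passes to arbitrary $s$ by a codimension count, a reduction your sketch does not address.
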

\begin{proof}
	Firstly, we compare the respective polarizations.
	Recall that the decomposition of \(\TangThree{Y}\)
	into \(\widetilde{H}^+(Y,\mathfrak{s}) \oplus \widetilde{H}^-(Y,\mathfrak{s})\)
	is given by the eigenspaces of \(\widetilde{L}\).
	Decomposing 
	\begin{equation*}
		i \Omega^1(Y) \oplus \Gamma(S_Y) \oplus i \Omega^0(Y)
		= \mleft(i \Omega^1_{cC}(Y) \oplus \Gamma(S_Y)\mright)  
		\oplus \mleft(\Omega^1_C(Y) \oplus i \Omega^0(Y)\mright)
	\end{equation*}
	we see that \(\widetilde{L}\) decomposes as
	\( \begin{pmatrix} \star d & 0 \\ 0 & \Dirac_{B_0} \end{pmatrix} 
	\oplus \begin{pmatrix} 0 & -d \\ -d^\ast & 0 \end{pmatrix} \).
	Denote by \(\proj^\pm_1\) the spectral projections of
	\(\begin{pmatrix}
		0 & -d \\
		-d^\ast & 0
	\end{pmatrix}\)
	in \(L^2_{1/2}(i \Omega^1_C(Y) \oplus i \Omega^0(Y))\).
	It follows that \( \widetilde{\proj}^\pm = \proj^\pm \oplus \proj^\pm_1\).

	This is enough to prove the statement about compactness.
	Indeed, the map
	\(\proj^+R : \ker\mleft( D_{(A,\Phi)}\SW \mright)
	\to H^+(Y,\mathfrak{s})\)
	is just the composition
	\begin{equation*}
		\ker\mleft( D_{(A,\Phi)}\SW \mright)
		\hookrightarrow
		\ker\mleft( \widetilde{D\SW}_{(A,\Phi)} \mright) 
		\xrightarrow{\widetilde{\proj}^+R}
		\widetilde{H}^+(Y,\mathfrak{s})
		\xrightarrow{\proj^+} 
		H^+(Y,\mathfrak{s})
	\end{equation*}
	where the map in the middle is compact by
	\autoref{prop:extended-DSW-semi-inf-dim}.

	For Fredholmness, denote by 
	\begin{equation*}
		\Pi_C : L^2_{1/2}(\Omega^1_C(Y) \oplus \Omega^0(Y))
		\to L^2_{1/2}(\Omega^1_C(Y) \oplus \Omega^0(Y))
	\end{equation*}
	the projection onto \(L^2_{1/2}(\Omega^1_C(Y)) \oplus \mathbb{R}^{\pi_0(Y)}\)
	(where \(\mathbb{R}^{\pi_0(Y)} \subset \Omega^0(Y)\)
	is the space of locally constant functions)
	along \(\{0\} \oplus L^2_{1/2}(\Omega^0_0(Y))\)
	(where \( \Omega^0_0(Y) = \{ f \in \Omega^0(Y) | \forall_i \int_{Y_i} f = 0 \}\)).
	This projection can be used to define the split Coulomb slice
	for the orthogonal splitting \(s_\perp\) 
	(\autoref{def:orthogonal-splitting}).
	Precisely, we have
	\begin{equation*}
		\TangCoulFour[s_\perp]{X} = \ker\mleft(d^\ast, \proj_C\mright).
	\end{equation*}
	
	Khandhawit \cite{Kha2015}*{Proposition 3.1}
	proves that \(\im \Pi_1^-\) and \(\ker \Pi_C\) are complementary,
	and then \cite{KMbook}*{Proposition 17.2.6} implies that
	\begin{align}
		\label{eqn:extended-DSW-with-modified-boundary-condition}
		&\widetilde{D\SW}_{(A,\Phi)} \oplus (\Pi^- R) \oplus (\Pi_C R):
		\TangFour{X}
		\\&\nonumber
		\to \SWDomain{X}
		\oplus \SForms[C]{1/2}{Y} \oplus \mathbb{R}^{\pi_0(Y)}
		\oplus H^-(Y,\mathfrak{s})
	\end{align}
	is Fredholm since \eqref{eqn:extended-DSW-APS} is Fredholm.
	Since \(\Pi_C|_{\im \Pi_1^-}\) is an~isomorphism
	onto \(\im \Pi_C\), therefore the proof of \cite{KMbook}*{Proposition
	17.2.6} implies that the index of 
	\eqref{eqn:extended-DSW-with-modified-boundary-condition}
	is the same as the index of 
	\eqref{eqn:extended-DSW-APS}.

	The following lemma is a~simple exercise in Fredholm theory.
	\begin{lemma}
		\label{lem:Fredholm-pullback}
		Let \((F,G):H \to A \oplus B\) be Fredholm.
		Then \(\tilde F = F|_{\ker G}: \tilde H = \ker G \to A\) is Fredholm
		and has index equal to \(\ind( (F,G) ) + \dim \coker G \).
	\end{lemma} 
	It implies that the operator
	\( D_{(A,\Phi)}\SW \oplus (\Pi^- R) \)
	is Fredholm as a~map
	\(\ker( \star d \oplus (\Pi_C R)) = 
	\TangCoulFour[s_\perp]{X}
	\to 
	\SWDomain{X} \oplus H^-(Y,\mathfrak{s})\)
	and has index 
	\begin{align*}
		\ind&\mleft(\widetilde{D\SW}_{(A,\Phi)} \oplus (\Pi^- R) \oplus (\Pi_C R)\mright) 
		+ \dim \coker( \star d \oplus (\Pi_C R))
		\\=& [ 2 \ind_{\mathbb{C}} \Dirac_{A_0}^+
		- b_0(X) + b_1(X) - b^+(X) - b_1(Y) ] + b_0(X)
		\\=&  2 \ind_{\mathbb{C}} \Dirac_{A_0}^+
		+ b_1(X) - b^+(X) - b_1(Y).
	\end{align*}
	Thus, we have proven the Proposition for a~particular splitting,
	\(s=s_\perp\).

	For any splitting \(s\),
	the inclusion \(\Omega^1_{s}(X) \hookrightarrow \Omega^1_{CC}(X)\)
	is of codimension 
	\(\dim(H^1(X, Y; \mathbb{R})) 
	- \dim(H^1(X; \mathbb{R}) = b_0(Y) - 1\).
	Therefore, the split double Coulomb slice
	\(\TangCoulFour[s]{X}\)
	is a~finite\hyp{}dimensional subspace of 
	the ``full'' double Coulomb slice
	\(\TangFour{X}\).
	Therefore the Fredholmness of \eqref{eqn:DSW-APS}
	for \(s = s_\perp\)
	implies the Fredholmness of
	\begin{align}
		\label{eqn:DSW-APS-double}
		D_{(A,\Phi)}\SW \oplus \proj^- R
		:& \TangCoulFour{X} \\
		\nonumber
		& \to \SWDomain{X} \oplus H^-(Y,\mathfrak{s})
	\end{align}
	and this, in turn, implies the Fredholmness of
	\eqref{eqn:DSW-APS} for any splitting \(s\).
	Moreover, the index of \eqref{eqn:DSW-APS-double}
	is equal to \(b_0(Y) - 1\)
	plus \eqref{eqn:DSW-APS-index}
	for any splitting \(s\), finishing the proof.
\end{proof}

We turn to deducing the surjectivity of \(D\SW\)
from the surjectivity of \(\widetilde{D\SW}\).
\begin{proposition}
	[surjectivity of \(D\SW\)]
	\label{prop:DSW-surjective}
	For any gauge splitting \(s\), the differential
	\begin{equation*}
		D_{(A,\Phi)}\SW
		: \TangCoulFour[s]{X} \to \SWDomain{X}
	\end{equation*}
	is surjective.
\end{proposition}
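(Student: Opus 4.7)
The plan is to reduce the claim to the already\hyp{}established surjectivity of~\(\widetilde{D\SW}_{(A,\Phi)}\) (\autoref{prop:extended-DSW-surjective}) by using the infinitesimal gauge invariance at a~solution (\autoref{lem:gauge-algebra-action}) to absorb both the double Coulomb and the split conditions.

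First, given \(w \in \SWDomain{X}\), I invoke \autoref{prop:extended-DSW-surjective} on the unrestricted tangent space~\(\TangFour{X}\) to produce \((a_0,\phi_0) \in \TangFour{X}\) with \(\widetilde{D\SW}_{(A,\Phi)}(a_0,\phi_0) = (0,w)\); in particular \(D_{(A,\Phi)}\SW(a_0,\phi_0) = w\). This preimage need not lie in \(\TangCoulFour[s]{X}\), so I must correct it.

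Next, I use the direct\hyp{}sum decomposition \(\Omega^1(X) = \Omega^1_s(X) \oplus d(\Omega^0(X))\) of~\eqref{eqn:CC-exact-decomposition}, realised at the level of~\(L^2_1\) by \autoref{lem:split-gauge-fixing-in-4d}. Write \(a_0 = a_s + df\), where \(a_s = \proj_s a_0 \in L^2_1(\Omega^1_s(X))\) and \(f \in L^2_2(\Omega^0_0(X))\) is such that \(u^s_{a_0} = e^{f}\). Applying \autoref{lem:gauge-algebra-action} to~\(-f\) (valid because \((A,\Phi)\) is a~solution, so \(\Dirac_A \Phi = 0\)) gives \(D_{(A,\Phi)}\SW(-df, f\Phi) = 0\). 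Adding this to \(D_{(A,\Phi)}\SW(a_0,\phi_0) = w\) yields
\begin{equation*}
	D_{(A,\Phi)}\SW(a_s,\, \phi_0 + f\Phi) = w.
\end{equation*}
By construction \(a_s \in \Omega^1_s(X)\) satisfies both Coulomb conditions and the split condition, and \(\phi_0 + f\Phi \in L^2_1(\Gamma(S^+_X))\) by the continuous multiplication \(L^2_2 \times L^2_1 \to L^2_1\) in dimension~\(4\) (\autoref{thm:multiplication}). Hence \((a_s, \phi_0 + f\Phi) \in \TangCoulFour[s]{X}\) is the desired preimage of~\(w\).

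The only analytic point I need to verify carefully is the regularity of the gauge correction~\(f\): I need \(f \in L^2_2\), not merely~\(L^2_1\), for \(f\Phi\) to remain of class~\(L^2_1\). This is precisely what \autoref{lem:split-gauge-fixing-in-4d} delivers, since its image lies inside the identity component of the gauge group, so there is no genuine obstacle. The proof is thus essentially a~direct synthesis of \autoref{prop:extended-DSW-surjective}, \autoref{lem:gauge-algebra-action}, and \autoref{lem:split-gauge-fixing-in-4d}.
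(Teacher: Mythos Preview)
Your approach is essentially the paper's: both start from the surjectivity of \(\widetilde{D\SW}_{(A,\Phi)}\) (\autoref{prop:extended-DSW-surjective}) and then use the infinitesimal gauge action (\autoref{lem:gauge-algebra-action}) to move the preimage into the split Coulomb slice. The paper packages this as surjectivity of the augmented map \((\widetilde{D\SW}_{(A,\Phi)},\,\Pi_d R,\,\Pi_{V_s})\) on the full tangent space and then observes that \(\TangCoulFour[s]{X}=\ker(d^\ast,\Pi_d R,\Pi_{V_s})\); you instead construct the preimage directly by splitting off \(df\) via \eqref{eqn:CC-exact-decomposition}. These are two sides of the same argument.

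There is, however, one genuine error. Your claim that \(f\Phi\in L^2_1\) ``by the continuous multiplication \(L^2_2\times L^2_1\to L^2_1\) in dimension~4'' is false: this is precisely the borderline case of \autoref{thm:multiplication}, and the paper states explicitly (after \autoref{lem:gauge-action-4d}, and again in the proof of the \namedref{thm:composing-cobordisms}) that this multiplication is \emph{not} continuous. Note that in your construction \(d^\ast a_0=0\) (the first component of \(\widetilde{D\SW}_{(A,\Phi)}(a_0,\phi_0)=(0,w)\)), so in the decomposition \(a_0=a_s+df\) one has \(\Delta f=0\): your \(f\) is harmonic, not a generic \(L^2_2\) function. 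This is exactly the situation the paper is in when it invokes \autoref{lem:gauge-algebra-action} with harmonic (and, for the \(V_s\) part, doubly harmonic hence smooth) \(f\); you should appeal to that lemma directly for the membership \((df,-f\Phi)\in\TangFour{X}\) rather than to a multiplication bound that fails.
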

\begin{proof}
	We will prove a~stronger statement,
	that this extended differential
	together with the exact part of the restriction to the boundary
	\begin{align}
		\label{eqn:DSW-and-boundary-exact-surjective}
		(\widetilde{D\SW}_{(A,\Phi)}, 
		\proj_d R, \proj_{V_s}) 
		&=
		(D\SW_{(A,\Phi)}, d^\ast, \proj_d R, \proj_{V_s}) : \\
		\nonumber
		\TangFour{X}&
		\to \\
		\nonumber
		\SWDomain{X}
		&\oplus L^2(i \Omega^0(X))
		\oplus \SForms[C]{1/2}{Y}
		\oplus V_s
	\end{align}
	is surjective,
	where \(\proj_d : \SForms{1/2}{Y} \to \SForms[C]{1/2}{Y}\)
	is the projection along \( \SForms[cC]{1/2}{Y} \)
	and \(\proj_{V_s} : \Omega^1(X) \to V_s = (\im s)^\perp\)
	is the orthogonal projection.
	The Proposition will follow since
	\(\TangCoulFour[s]{X} = \ker(d^\ast, \proj_d R, \proj_{V_s})\).

	\autoref{prop:extended-DSW-surjective} implies that
	\(\widetilde{D\SW}_{(A,\Phi)}
		: \TangFour{X}
		\to
		\SWDomain{X} \)
	is surjective.
	To prove surjectivity of \eqref{eqn:DSW-and-boundary-exact-surjective} 
	it thus remains to prove that
	\(\Pi_d R: \ker\mleft( \widetilde{D\SW}_{(A,\Phi)}\mright) \to
	\SForms[C]{1/2}{Y}\)
	is surjective and that
	\(\Pi_{V_s}: \ker \mleft(\widetilde{D\SW}_{(A,\Phi)}, \Pi_d R\mright) \to V_s\)
	is surjective.

	We prove that 
	\(\Pi_d R|_{\ker \widetilde{D\SW}_{(A,\Phi)}}\)
	is surjective.
	Take any \(g \in L^2_{3/2}(i \Omega^0(Y))\)
	representing a~given element
	\(dg \in L^2_{1/2}( i \Omega^1_C(Y))\).
	Take the unique \(f \in L^2_2(i \Omega^0(X))\)
	such that \(\Delta f = 0\) and \(f|_Y = g\).
	Then \(\Pi_d R(df, -f \Phi) = df|_Y = dg\).
	The required surjectivity follows since
	\( (df,-f \Phi) \in \ker \widetilde{D\SW}_{(A,\Phi)}\),
	which follows from \(d^\ast d f = \Delta f = 0\) and
	\autoref{lem:gauge-algebra-action}.

	Similarly we prove
	\(\Pi_{V_s}|_{\ker \mleft(\widetilde{D\SW}_{(A,\Phi)}, \Pi_d R\mright)}\)
	is surjective.
	By \eqref{eqn:decomposing-harmonics} and definition of \(V_s\),
	the orthogonal projection \(d(\mathcal{H}(X))) \to V_s\)
	is an~isomorphism and therefore for any
	\(v \in V_s\) there is
	\(f \in \mathcal{H}(X)\) such that \(\proj_{V_s}(df)=v_s\).
	Moreover
	\(\widetilde{D\SW}(df,-f\Phi) = 0\) and
	\(\proj_d R(df,-f\Phi) = 0\), as wished.
\end{proof}

Finally, we prove the density of the restriction map to a~connected component
of \(Y\).
\begin{proposition}
	[density of moduli on one boundary component]
	\label{prop:DSW-moduli-boundary-dense}
	The restriction
	\begin{equation*}
		R : \ker\mleft(D_{(A,\Phi)}\SW\mright) \to \TangCoulThree{Y_0}
	\end{equation*}
	to a~connected component \(Y_0 \subset Y\)
	is dense,
	where we consider \( \ker\mleft(D_{(A,\Phi)}\SW\mright) 
	\subseteq \TangCoulFour[s]{X} \).
\end{proposition}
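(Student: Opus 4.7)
The plan is to transfer the density statement from the extended operator (\autoref{prop:extended-DSW-moduli-boundary-dense}) to $D\SW$ by a gauge\hyp{}algebra correction that stays in $\ker D_{(A,\Phi)}\SW$. Starting from $v \in \TangCoulThree{Y_0}$, the extended result supplies $w_n = (a_n,\phi_n) \in \ker \widetilde{D\SW}_{(A,\Phi)}$ with $R(w_n) \to v$ in $\TangThree{Y_0}$. Each $w_n$ satisfies $d^\ast a_n = 0$ but generally not the boundary Coulomb nor split conditions. I will correct $w_n$ to $w_n + (df_n, -f_n\Phi) \in \TangCoulFour[s]{X}$ for a suitable $f_n \in L^2_2(i\Omega^0(X))$ harmonic on $X$, which by \autoref{lem:gauge-algebra-action} remains in $\ker D_{(A,\Phi)}\SW$ (since $\Dirac_A \Phi = 0$ at the solution $(A,\Phi)$).

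The construction of $f_n$ proceeds as follows. By \autoref{lem:decomposing-1-forms}, uniquely write $a_n = a_n^C + df_n^C$ with $a_n^C \in i\Omega^1_{CC}(X)$ and $f_n^C \in L^2_2(i\Omega^0_\partial(X))$; the identity $d^\ast a_n = 0$ forces $\Delta f_n^C = 0$. Using \eqref{eqn:decomposing-CCs} and \eqref{eqn:decomposing-harmonics}, further decompose $a_n^C = a_n^\perp + a_n^{H,s} + dh_n$ with $a_n^\perp \in i\Omega^1_{CC}(X) \cap (\mathcal{H}^1_D(X))^\perp$, $a_n^{H,s} \in \im s^H$, and $h_n \in i\mathcal{H}(X)$. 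Pick $g_n \in i\mathcal{H}(X)$ with $dg_n = -dh_n$, normalized so that $g_n|_{Y_0} = 0$ (which is possible since $g_n|_{Y_0}$ is a single constant on the connected $Y_0$), and set $f_n = -f_n^C + g_n$. One checks that $a_n + df_n = a_n^\perp + a_n^{H,s} \in i\Omega^1_s(X)$ with $d^\ast(a_n + df_n) = 0$, so $w_n + (df_n, -f_n\Phi) \in \TangCoulFour[s]{X} \cap \ker D_{(A,\Phi)}\SW$.

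The remaining step, and the main analytical obstacle, is to show this correction is asymptotically invisible on $Y_0$. The key identity is $\Delta_Y(f_n^C|_Y) = d^\ast(\iota^\ast a_n)$, obtained by applying $d^\ast$ to $\iota^\ast a_n = \iota^\ast a_n^C + d(f_n^C|_Y)$ and using $d^\ast(\iota^\ast a_n^C) = 0$. Restricted to $Y_0$, the right\hyp{}hand side tends to $d^\ast \alpha = 0$ in $L^2_{-1/2}(Y_0)$, since $v = (\alpha, \psi) \in \TangCoulThree{Y_0}$, so elliptic regularity of $\Delta_{Y_0}$ on the closed \(3\)\hyp{}manifold $Y_0$, together with the normalization $\int_{Y_0} f_n^C = 0$, yields $f_n^C|_{Y_0} \to 0$ in $L^2_{3/2}(Y_0)$. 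Combined with $g_n|_{Y_0} \equiv 0$ and Sobolev multiplication on $Y_0$, this gives $R(df_n, -f_n\Phi) \to 0$ in $L^2_{1/2}(Y_0)$ and hence $R(w_n + (df_n, -f_n\Phi)) \to v$, as desired. The subtle point is that arranging the correction to vanish in the limit on $Y_0$ requires both the Coulomb hypothesis on $v$ (killing the $d(f_n^C|_{Y_0})$ contribution) and the freedom to normalize $g_n$ to vanish on a single connected component $Y_0$; neither alone would suffice.
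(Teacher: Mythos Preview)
Your argument is correct and follows the same strategy as the paper: approximate by elements of $\ker\widetilde{D\SW}_{(A,\Phi)}$ using \autoref{prop:extended-DSW-moduli-boundary-dense}, then apply a gauge\hyp{}algebra correction $(df_n,-f_n\Phi)$ via \autoref{lem:gauge-algebra-action} to land in the split Coulomb slice, and finally show the correction vanishes on $Y_0$ in the limit.

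The only difference is organizational. You build $f_n$ in two stages (first $f_n^C$ to reach $\Omega^1_{CC}$, then $g_n\in i\mathcal{H}(X)$ to reach $\Omega^1_s$) and prove $f_n^C|_{Y_0}\to 0$ via the Laplacian identity $\Delta_{Y_0}(f_n^C|_{Y_0})=d^\ast(\iota_{Y_0}^\ast a_n)\to 0$ plus elliptic regularity. The paper does this in one step, writing $\tilde a_k=a_k+df_k$ directly via the decomposition \eqref{eqn:CC-exact-decomposition} with $a_k\in\Omega^1_s(X)$, normalizing $\int_{Y_0}f_k=0$, and then observing that since $a_k$ is already double\hyp{}Coulomb, the Hodge decomposition on $Y_0$ splits $R_{Y_0}(\tilde a_k)=R_{Y_0}(a_k)+d(f_k|_{Y_0})$ into coclosed plus exact parts; convergence to the coclosed target $b$ forces $d(f_k|_{Y_0})\to 0$ immediately. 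Your Laplacian argument and the paper's Hodge\hyp{}projection argument are equivalent, but the one\hyp{}step decomposition avoids tracking two pieces and makes the normalization on $Y_0$ slightly cleaner.
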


\begin{proof}
	Take any
	\( (b,\psi) \in \TangCoulThree{Y_0} 
	\subset \TangThree{Y_0}\).
	It follows from \autoref{prop:extended-DSW-moduli-boundary-dense}
	that we can take a~sequence
	\( (\tilde a_k, \phi_k) \in \ker \mleft( \widetilde{D\SW}_{(A,\Phi)} \mright) \)
	such that \( R_{Y_0}(\tilde a_k, \phi_k) \to (b,\psi)\)
	in \(\TangCoulThree{Y_0}\).
	Using the decomposition \eqref{eqn:CC-exact-decomposition}
	we can write \(\tilde a_k = a_k + d f_k\)
	for \(f_k \in i \Omega^0(X)\) and \(a_k \in \Omega^1_{s}(X)\).
	Since \(Y_0\) is connected,
	we can change \(f_k\) by a~constant to obtain
	\(\int_{Y_0} f_k = 0\).
	Decomposing \(R_{Y_0}(\tilde a_k) = R_{Y_0}(a_k) + R_{Y_0}(d f_k)\)
	we get that \(R_{Y_0}(a_k) \to b\) and \(R_{Y_0}(d f_k) \to 0\).
	This together with \(\int_{Y_0} f_k = 0\)
	implies that \(f_k|_{Y_0} \to 0\)
	and therefore
	\begin{align*}
		(b,\psi)
		=&
		\lim_{k \to \infty} R_{Y_0}(\tilde a_k, \phi_k)
		=
		\lim_{k \to \infty} R_{Y_0}(a_k,\phi_k + i f_k \Phi)
	\end{align*}
	which finishes the proof because
	\(a_k \in \SForms[s]{1}{X}\)
	and
	\(D\SW(a_k,\phi_k + i f_k \Phi) = D\SW(a_k+i df_k, \phi_k) = 0\)
	by \autoref{lem:gauge-algebra-action}.
\end{proof}

The results of \autoref{prop:DSW-semi-inf-dim},
\autoref{prop:DSW-surjective} and 
\autoref{prop:DSW-moduli-boundary-dense}
can be summarized as follows.
\moduli

\section{Gluing along a~boundary component}
\label{sec:gluing-along-a-boundary-component}
This section is devoted to the proof of the main result of this article,
the \namedref{thm:composing-cobordisms}, which relates the moduli spaces
of solutions on \(X_1\), \(X_2\) and \(X = X_1 \cup_{Y} X_2\),
where \(Y\) is a~rational homology sphere,
oriented as a~component of the boundary of \(X_1\).
Under the identification
\(\CoulThree{Y} \simeq \Configuration[cC]{-Y}{\overline{\mathfrak{s}}}\)
there are \(S^1\)\hyp{}equivariant twisted restriction maps
\(R_{\tau_i,Y} : \SWModuli[i]{s_i}{X_i} \to \CoulThree{Y}\),
where \(\mathfrak{s} = \hat{\mathfrak{s}}|_Y\).
One can expect the fiber product
\begin{align*}
	\SWModuli[1]{s_1}{X_1} 
	&\times_{Y} \SWModuli[2]{s_2}{X_2} =
	\\
	= \{ 
	&(A_1, \Phi_1, A_2, \Phi_2) \in 
	\SWModuli[1]{s_1}{X_1} \times \SWModuli[2]{s_2}{X_2} 
	\\
	&| R_{\tau_1,Y}(A_1, \Phi_1) =
R_{\tau_2,Y}(A_2,\Phi_2)\}
\end{align*}
to be diffeomorphic to \(\SWModuli{s}{X}\),
and this turns out to be true.
One would also like to have this
map intertwine the twisted restriction maps
to \(\partial X\),
but this is a~bit too much:
the splittings and twistings 
\( (s,\tau)\), \((s_1,\tau_1)\), \((s_2,\tau_2)\),
need to enjoy certain compatibility, and even then
the restriction maps
may not match on the nose
but need to be homotoped to each other.
This reflects the fact that we did not quotient by the action of \(S^1\)
on the configuration spaces.

The proof utilizes the following fact which is of independent interest.
Let \(X' \subset X\) be a~submanifold,
the closure of which is contained in the interior of \(X\).
Then the restriction map from \(\SWModuliFree{CC}{X}\)
to \(L^2_k\)\hyp{}configurations on \(X'\) is well\hyp{}defined and \textit{smooth}
for any \(k \geq 0\).
Well\hyp{}definedness follows from a~standard argument,
but proving the smoothness of this map turns out to be
a~surprisingly delicate task which we tackle in 
\autoref{sec:smoothness-of-restrictions}.

The same strategy should work to prove smoothness of restriction
maps to interior submanifolds
for other types of moduli spaces appearing in gauge theory,
e.g., for the space of anti\hyp{}self\hyp{}dual connections on 
\(G \hookrightarrow P \to X\).
The key is the ellipticity of the equations
together with the gauge fixing.

\subsection{Smoothness of restrictions}
\label{sec:smoothness-of-restrictions}
Assume \(X' \subset \mathring{X}\) is a~submanifold
with closure contained in \(\mathring X\).
The goal is to show (cf. \autoref{thm:restriction-is-smooth-for-solutions})
that the restriction map
\(R: \SWModuliFree{CC}{X} \to
	\Configuration[k]{X'}{\hat{\mathfrak{s}}|_{X'}}\)
is smooth for any \(k\),
where
\(\Configuration[k]{X'}{\hat{\mathfrak{s}}|_{X'}}
= \ConfFourFull[k]{A_0|_{X'}}{X'}
\).
We restrict ourselves to the case \(k=2\), 
but the same strategy may be used iteratively,
bootstraping the result to any \(k\), if needed.

Due to \autoref{thm:trace} we may assume, without loss of generality,
that \(X'\) is of codimension \(0\).
Since we require the closure of \(X'\) to be contained in the interior
\(\mathring X\), we may as well assume that \(X'\) is a~closed submanifold.

The following fundamental fact shows that any element in the image
of the restriction map is itself a~smooth configuration.
\begin{lemma}
	[interior smoothness of solutions]
	\label{lem:interior-smoothness-of-solutions}
\cite{KMbook}*{Lemma 5.1.5}
	Every \(\gamma \in \SWModuliFree{CC}{X}\)
	is smooth on \(\mathring{X}\).
\end{lemma}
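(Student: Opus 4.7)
The plan is a bootstrap in interior Sobolev norms, starting from the hypothesis that $(A,\Phi) \in L^2_1$. Setting $a = A - A_0$, on any $U \Subset \mathring X$ the pair $(a,\Phi)$ satisfies the elliptic system
\begin{align*}
	(d^+ + d^*)\, a &= \rho^{-1}((\Phi\Phi^*)_0), \\
	\Dirac_{A_0}\Phi &= -\rho(a)\Phi,
\end{align*}
with no boundary conditions involved (the double Coulomb constraint reduces to $d^*a = 0$ in the interior). The combined first-order operator $\mathcal{D} = (d^+ + d^*) \oplus \Dirac_{A_0}$ is elliptic, and localization by a cutoff $\eta \in C_c^\infty(\mathring X)$ via $\mathcal{D}(\eta a, \eta\Phi) = \eta \mathcal{D}(a,\Phi) + [\mathcal{D}, \eta](a,\Phi)$ reduces the analysis to compactly supported configurations, where interior $L^p$-elliptic regularity applies.

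The delicate step is upgrading from $L^2_1$ to an $L^\infty_{loc}$ bound on $\Phi$. I would exploit the Weitzenb\"ock identity $\Dirac_A^2 = \nabla_A^*\nabla_A + s/4 + \tfrac{1}{2}\rho(F_{A^t}^+)$ applied to $\Phi$, combined with the Seiberg\hyp{}Witten equation $F_{A^t}^+ = 2\rho^{-1}((\Phi\Phi^*)_0)$, to derive weakly a differential inequality of the form $\Delta |\Phi|^2 + c|\Phi|^4 \leq C |\Phi|^2$. Moser iteration (or a weak maximum principle argument, justified using the fact that $|\Phi|^2 \in L^2_{1,loc}$) then yields $\Phi \in L^\infty_{loc}(\mathring X)$.

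Once $\Phi$ is locally bounded, $\rho^{-1}((\Phi\Phi^*)_0) \in L^\infty_{loc}$, and interior $L^p$-regularity for $(d^+ + d^*)$ forces $a \in L^p_{1,loc}$ for every $p < \infty$; in particular $a \in L^\infty_{loc}$ by the Sobolev embedding $L^p_1 \hookrightarrow L^\infty$ valid for $p > 4$ in dimension $4$. Then $\rho(a)\Phi \in L^p_{loc}$ for all $p$, and interior $L^p$-regularity for $\Dirac_{A_0}$ gives $\Phi \in L^p_{1,loc}$ for all $p$ as well.

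From this point the bootstrap is routine. Assuming inductively that $(a,\Phi) \in L^2_{k,loc}$ with $k \geq 2$, the nonlinearities $(\Phi\Phi^*)_0$ and $\rho(a)\Phi$ both lie in $L^2_{k,loc}$ by Sobolev multiplication (since $L^2_k$ is a Banach algebra in dimension $4$ for $k \geq 2$), so interior elliptic regularity for $\mathcal{D}$ upgrades the solution to $L^2_{k+1,loc}$. Iterating yields smoothness on $\mathring X$. The main obstacle is the initial $L^\infty$ bound on $\Phi$: the critical embedding $L^2_1 \hookrightarrow L^4$ in dimension $4$ prevents any direct multiplicative bootstrap at the level of the nonlinearities, forcing one to invoke the Weitzenb\"ock-based pointwise estimate to break out of the critical regime.
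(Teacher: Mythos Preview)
The paper does not supply a proof of this lemma; it simply cites \cite{KMbook}*{Lemma 5.1.5}. Your outline is the standard bootstrap argument underlying that citation, and it is essentially correct: you have correctly identified that the borderline embedding $L^2_1 \hookrightarrow L^4$ in dimension $4$ blocks any naive bootstrap on the quadratic nonlinearities, and that a Weitzenb\"ock-based pointwise bound on $\Phi$ is the way to escape the critical regime.

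One imprecision worth flagging: you assert $|\Phi|^2 \in L^2_{1,loc}$, but from $\Phi \in L^2_1 \subset L^4$ and $\nabla_A\Phi \in L^2$ one only obtains $\nabla|\Phi|^2 \sim \langle\nabla_A\Phi,\Phi\rangle \in L^{4/3}_{loc}$ by H\"older, hence $|\Phi|^2 \in W^{1,4/3}_{loc}$, which falls short of $W^{1,2}_{loc}$. The clean fix is to run the argument for $|\Phi|$ rather than $|\Phi|^2$: Kato's inequality $\bigl|\, d|\Phi|\,\bigr| \leq |\nabla_A\Phi|$ gives $|\Phi| \in L^2_{1,loc}$ directly, and combining the Weitzenb\"ock identity with the curvature equation yields the weak subsolution inequality $\Delta|\Phi| + \tfrac{1}{2}|\Phi|^3 \leq C|\Phi|$, to which De Giorgi--Nash--Moser iteration applies without further difficulty. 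Once $\Phi \in L^\infty_{loc}$ is in hand, your $L^p$ bootstrap proceeds exactly as written.
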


We are ready to prove the main theorem of this subsection.
Note that the surjectivity assumption is satisfied whenever
\(\partial X \neq \varnothing\)
due to \autoref{prop:DSW-surjective}.
\begin{theorem}
	[restriction is smooth on solution sets]
	\label{thm:restriction-is-smooth-for-solutions}
	Assume that for any \((A,\Phi) \in \SWModuliFree{CC}{X}\)
	the operator \(D\SW_{(A,\Phi)}\) is surjective.
	Then the restriction map
	\(R: \SWModuliFree{CC}{X} \to
	\Configuration[k]{X'}{\hat{\mathfrak{s}}|_{X'}}\)
	is smooth.
\end{theorem}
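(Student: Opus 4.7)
The strategy is to reduce to a local statement near a fixed solution $\gamma_0 \in \SWModuliFree{CC}{X}$ and apply the \namedref{iftheorem} twice: once at low regularity on all of $X$ (producing the moduli chart) and once at a higher regularity on an intermediate open domain $X''$ with $X' \Subset X'' \Subset \mathring X$. I treat $k=2$ in detail and iterate for larger $k$.

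Fix such a $\gamma_0$. By the surjectivity hypothesis combined with the Fredholm framework developed in \autoref{sec:properties-of-moduli-spaces}, the \namedref{iftheorem} produces a smooth local chart $\phi : U \to \SWModuliFree{CC}{X}$ from an open neighborhood $U$ of $0$ in $T_{\gamma_0}\SWModuliFree{CC}{X}$ with $\phi(0) = \gamma_0$. By \autoref{lem:interior-smoothness-of-solutions} the configuration $\gamma_0$ is smooth on $\mathring X$, so on $X''$ the linearized operator $D\SW_{\gamma_0}$ has smooth coefficients. Using the regularity theorem of \autoref{sec:analytical-preparation} together with interior elliptic estimates and a cutoff adapted to the triple $X' \Subset X'' \Subset \mathring X$, I would construct a continuous right inverse of the linearized system $(D\SW_{\gamma_0}, d^\ast)|_{X''}$ that gains one Sobolev derivative, mapping into $\Configuration[2]{X''}{\hat{\mathfrak{s}}|_{X''}}$. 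A second application of the \namedref{iftheorem}, now in the higher-regularity setting, produces a smooth Hilbert submanifold $\mathcal{M}'' \subset \Configuration[2]{X''}{\hat{\mathfrak{s}}|_{X''}}$ of $L^2_2$-regular local solutions near $\gamma_0|_{X''}$, together with a smooth local chart $\phi'' : U'' \to \mathcal{M}''$.

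The remaining step is to factor the restriction as $R_{X''} \circ \phi = \phi'' \circ \iota$ for a smooth map $\iota : U \to U''$. The image of $R_{X''} \circ \phi$ lies in $\mathcal{M}''$ by \autoref{lem:interior-smoothness-of-solutions}, and defining $\iota$ via the projection characterization that feeds into the higher-regularity IFT, the uniqueness clause of that IFT forces the desired identification; smoothness of $\iota$ is then inherited from smoothness of $\phi$ and continuity of the low-regularity projections involved. Composing with the bounded restriction $\Configuration[2]{X''}{\hat{\mathfrak{s}}|_{X''}} \to \Configuration[2]{X'}{\hat{\mathfrak{s}}|_{X'}}$ yields the claim for $k=2$. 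For general $k$ one iterates the argument on a nested family $X' \Subset X_k \Subset \cdots \Subset X_2 \Subset \mathring X$, bootstrapping one Sobolev derivative at each step. The main analytic obstacle is the construction of the derivative-gaining right inverse on $X''$: it demands combining the regularity theorem of \autoref{sec:analytical-preparation} with cutoff techniques in such a way that the interaction between the global Coulomb condition on $X$ and the localization to the interior remains controllable, and that the resulting local slice on $X''$ genuinely contains the restriction of the low-regularity chart $\phi$.
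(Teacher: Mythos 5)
The outline shares the paper's skeleton --- two applications of the \namedref{thm:IFT} and an intermediate region $X'' $ --- but there is a genuine circularity at the factorization step that you yourself flag as "the main analytic obstacle," and it is not a technicality but the crux of the argument.

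You define $\mathcal{M}'' \subset \Configuration[2]{X''}{\hat{\mathfrak{s}}|_{X''}}$ as a local solution set on the cut-off domain $X''$, chart it by a higher-regularity IFT, and then want to write $R_{X''}\circ\phi = \phi'' \circ \iota$ with $\iota = (\phi'')^{-1}\circ R_{X''}\circ\phi$. But $(\phi'')^{-1}$ is defined only on a neighborhood inside $L^2_2$-configurations on $X''$, so smoothness of $\iota$ presupposes that $R_{X''}\circ\phi$ is a \emph{smooth} map from $U$ into $L^2_2(X'')$ --- which is precisely the regularity gain that the theorem asserts. The claim that smoothness of $\iota$ "is inherited from smoothness of $\phi$ and continuity of the low-regularity projections" therefore assumes the conclusion. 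The restriction $R_{X''}$ is bounded only $L^2_1(X)\to L^2_1(X'')$; one derivative is what has to be gained, not what is available. A related difficulty: the tangent space of $\mathcal{M}''$ at $\gamma_0|_{X''}$ (an unconstrained boundary value problem on $X''$) is much larger than, and not canonically comparable with, $\ker D\SW_{\gamma_0}\subset \TangCoulFour{X}$, so the two IFT charts do not line up in the way the factorization requires.

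The paper sidesteps this by choosing a different intermediate space: not configurations on $X''$, but configurations on \emph{all} of $X$ completed in the mixed norm $\|v\|^2_{2,X''} = \|v\|^2_{L^2_1(X)} + \|\hat D v\|^2_{L^2_1(X'')}$. The intermediate moduli space $\SWModuliFree{CC,X''}{X}$ is then still the set of global solutions in the double Coulomb slice, just viewed in a finer topology. Its tangent space at $p=(A,\Phi)$ is set-theoretically \emph{equal} to $\ker D\SW_p\subset\TangCoulFour{X}$, and the identity map between the two tangent spaces is a Banach isometry (Lemma on tangent-space isometry): for $v\in\ker D\SW_p$ one has $\hat Dv|_{X''}=-\hat Kv|_{X''}$ with $\hat K|_{X''}$ multiplication by a smooth configuration, so the $L^2_1(X'')$-norm of $\hat Dv$ is controlled by $\|v\|_{L^2_1(X)}$. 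With the same equations, the same tangent space, and a complement $H'=H\cap(\text{mixed-norm space})$ compatible with the low-regularity complement $H$, both IFT parametrizations are projections along $H$, so the identity map between the two moduli spaces factors through the identity on tangent spaces and is hence a local diffeomorphism. Only then does one post-compose with the bounded linear restriction (via the G{\aa}rding inequality) from the mixed-norm space to $L^2_2(X')$. If you want to salvage your approach you need this same idea: keep the solution set global so that the nonlinear part of the argument never has to solve an equation on a truncated domain, and only localize in the choice of norm.
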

\begin{proof}
	\newcommand{\SPC}{L^2_{2,X''}\bigl(i \Omega^1_{CC}(X) \oplus%
	\Gamma\mleft(S_X\mright)\bigr)}
	\newcommand{\SPCnorm}[1]{\bigl\|#1\bigr\|_{2,X''}}
	\newcommand{\SPConf}{(A_0,0)+\SPC}
	\newcommand{\SPDom}{L^2_{1,X''}\bigl(i \Omega^+(X) \oplus%
	\Gamma\mleft(S_X^+\mright)\bigr)}
	Choose a~compact codimension\hyp{}\(0\) submanifold
	\(X'' \subset \mathring{X}\) such that \(X' \subset \mathring{X}''\).
	Let us introduce an intermediate space
	\(\SPC\)
	defined as the completion of
	\(i \Omega^1_{CC}(X) \oplus \Gamma(S_{X}^+)\)
	with respect to the norm
	\( \SPCnorm{v} = \sqrt{\lVert \hat D v\rVert^2_{L^2_1(X'')} +
	\lVert v\rVert^2_{L^2_1(X)}}\),
	so that \(\SPC\) is a~Hilbert space.
	Define 
	\begin{equation*}
		\SWModuliFree{CC,X''}{X}
		= \SWModuliFree{CC}{X} \cap \mleft(\SPConf\mright),
	\end{equation*}
	the set of solutions to the Seiberg\hyp{}Witten equations
	in the corresponding configuration space.
	Since \(\tilde D\) is elliptic and \( \tilde D v = (\hat D v, 0) \),
	by \autoref{thm:garding} the restriction map
	\[\SPC \to L^2_2(i \Omega^1(X') \oplus \Gamma(S_{X'}^+))\]
	is continuous linear (thus smooth).
	It thus suffices to prove that 
	the moduli space \(\SWModuliFree{CC,X''}{X}\)
	is a~smooth submanifold of the configuration space
	\(\SPConf\)
	and that the identity map
	\(\mathrm{Id}_{2}: \SWModuliFree{CC}{X} 
	\to \SWModuliFree{CC,X''}{X}\)
	is well\hyp{}defined, continuous and smooth.
	
	Firstly, it is well\hyp{}defined by 
	\autoref{lem:interior-smoothness-of-solutions}.

	Secondly, we prove that
	\(\SWModuliFree{CC,X''}{X}\)
	is a~smooth submanifold of
	\(\SPConf\).
	Define
	\(\SPDom\)
	to be the Hilbert space obtained as the completion of
	\(i \Omega^+(X) \oplus \Gamma(S_X^+)\)
	with respect to the norm
	\( \lVert v\rVert_{L^2(X) \cap L^2_1(X'')} = \sqrt{\lVert v\rVert^2_{L^2(X)}
	+ \lVert v\rVert^2_{L^2_1(X'')}}\).
	By the \namedref{thm:IFT}, it suffices to prove the following Lemma.
	\begin{lemma}
		\label{lem:DSW-surjective-higher-regularity}
		The differential
		\begin{equation*}
			D\SW_{(A,\Phi)} : \SPC
			\to \SPDom
		\end{equation*}
		is surjective at each \( (A,\Phi) \in 
		\SWModuliFree{CC,X''}{X}\).
	\end{lemma}
	\begin{subproof}
		We assumed that
		the operator
		\(
		D\SW_{(A,\Phi)} 
		\)
		is surjective.
		Choose any \(w \in \SPDom\)
		and pick a~solution \(v \in \TangCoulFour{X}\)
		to \(D\SW_{(A,\Phi)}(v) = w\).
		Then \(\hat D v|_{X''} = - \hat K v |_{X''} + w|_{X''}\)
		is in \(L^2_1(X'')\)
		since \(w|_{X''} \in L^2_1(X'')\), \(v|_{X''} \in L^2_1(X'')\)
		and \(\hat K|_{X''}\) is a~certain multiplication by
		\( (A - A_0,\Phi)|_{X''} \), which is smooth
		by \autoref{lem:interior-smoothness-of-solutions}.
	\end{subproof}

	Finally, it remains to prove that the identity map
	\[\SWModuliFree{CC}{X}
	\to \SWModuliFree{CC,X''}{X}\]
	is smooth.
	We start by identifying the tangent spaces 
	at \(p = (A,\Phi)\); we have
	\begin{align*}
		T_p \SWModuliFree{CC}{X} 
		&= \ker D\SW_{p}, \\
		\qquad T_p \SWModuliFree{CC,X''}{X}
		&= \mleft(\ker D\SW_p\mright) \cap {\SPC}.
	\end{align*}
	\begin{lemma}
		[isometry of the tangent spaces]
		\label{lem:tangent-spaces-isometry}
		The identity map
		\(T_p \SWModuliFree{CC}{X}
		\to T_p \SWModuliFree{CC,X''}{X}\)
		is well\hyp{}defined, and an~isometry.
	\end{lemma}
	\begin{subproof}
		Take any \(v \in
		T_p \SWModuliFree{CC}{X}\);
		in particular, \(D\SW_{(A,\Phi)}(v)=0\).
		Well\hyp{}definedness follows since
		\( \hat D v|_{X''} = - \hat K v|_{X''}\)
		and as before, \(\hat K|_{X''}\) is a~multiplication by 
		a~smooth configuration on \(X''\).
		This also implies
		\begin{align*}
			\lVert v\rVert^2_{L^2_1(X)}
			\leq \SPCnorm{v}^2
			=& \lVert v\rVert^2_{L^2_1(X)} + \lVert \hat D v\rVert^2_{L^2_1(X'')}
			\\=& \lVert v\rVert^2_{L^2_1(X)} + \lVert \hat K v\rVert^2_{L^2_1(X'')}
			\\\leq& \lVert v\rVert^2_{L^2_1(X)} + C_p \lVert v\rVert^2_{L^2_1(X'')}
			\\\leq& (1+C_p) \lVert v\rVert^2_{L^2_1(X)} 
		\end{align*}
		which proves this map is an~isometry.
	\end{subproof}

	While the \(L^2(X)\) norm is not complete on \(\TangCoulFour{X}\), 
	the \(L^2(X)\)\hyp{}orthogonal complement \(H\) to
	\(T_p \SWModuliFree{CC}{X}\)
	is a~closed subspace of \(\TangCoulFour{X}\)
	such that \(H + T_p \SWModuliFree{CC}{X} = \TangCoulFour{X}\)
	and \(H \cap T_p \SWModuliFree{CC}{X} = \{0\}\),
	thus by the open mapping theorem 
	\[\TangCoulFour{X} = T_p\SWModuliFree{CC}{X} \oplus H.\]
	By the \namedref{thm:IFT}
	there is a~neighborhood \(U\) of \( p \) such that the affine projection
	\begin{equation*}
		U \cap \SWModuliFree{CC}{X} \to 
		U \cap \mleft( (A_0,0) + T_p \SWModuliFree{CC}{X}\mright) 
	\end{equation*}
	along \(H\) is a~diffeomorphism.
	Similarly, the \namedref{thm:IFT}
	implies that there is a~neighborhood \(V\) of \(p\)
	such that the affine projection
	\begin{equation*}
		U \cap \SWModuliFree{CC,X''}{X}
		\to U \cap \mleft( (A_0,0) + T_p \SWModuliFree{CC,X''}{X}\mright)
	\end{equation*}
	along \(H' = H \cap \SPC\) is a~diffeomorphism
	since \(H'\) is the \(L^2(X)\)\hyp{}orthogonal complement
	to \(T_p \SWModuliFree{CC,X''}{X}\).
	Thus the identity map
	\(\SWModuliFree{CC}{X}
	\to \SWModuliFree{CC,X''}{X}\)
	near \(p\) factors as
	\begin{align*}
		U \cap \SWModuliFree{CC}{X}
		& \to 
		U \cap \mleft( (A_0,0) + T_p \SWModuliFree{CC}{X}\mright)
		\\ & \xrightarrow{\mathrm{id}}
		V \cap \mleft( (A_0,0) + T_p \SWModuliFree{CC,X''}{X}\mright)
		\\ & \to 
		V \cap \SWModuliFree{CC,X''}{X}
	\end{align*}
	where the middle identity map is smooth by
	\autoref{lem:tangent-spaces-isometry}
	and the two other maps are smooth since they are parametrizations
	coming from the \namedref{thm:IFT}, as we just showed.
\end{proof}

\subsection{Proof of the gluing theorem}
\label{sec:proof-of-the-gluing-theorem}

We are ready to prove the gluing theorem.

Let \(X = X_1 \cup_{Y} X_2\)
with \(Y\) connected and \(b_1(\partial X_i) = 0\).
Let \(\hat{\mathfrak{s}}\) be a~\spinc{} structure
and \(A_0\) be a~reference \spinc{} connection on \(X\).
Let restrictions of \(\hat{\mathfrak{s}}\) be the \spinc{} structures
used to define configuration spaces
and the restrictions of \(A_0\) to be the reference connections.
Denote \(Y_1 = \partial X_1 \setminus Y\),
\(Y_2 = \partial X_2 \setminus Y\),
\(\hat{\mathfrak{s}}_i = \hat{\mathfrak{s}}|_{X_i}\).
Fix gauge splittings
\(s,s_1,s_2\) and twistings \(\tau, \tau_1, \tau_2\) on \(X, X_1, X_2\).

Denote by \(s_{\mathbb{Z}}, s_{1,\mathbb{Z}}, s_{2,\mathbb{Z}}\)
the associated integral splittings
given by \autoref{prop:twistings-are-integral-splittings}.
We say they are \emph{compatible} 
if \(s_{\mathbb{Z}}\) corresponds to \((s_{1,\mathbb{Z}}, s_{2,\mathbb{Z}})\)
under the following identification.
\begin{proposition}
	[integral splittings on a~composite cobordism]
	\label{prop:integral-splittings-on-a-composite-cobordism}
	There is a~canonical identification between the set of integral
	splittings on \(X\) and the set of pairs of integral splittings
	on \(X_1\) and \(X_2\).
\end{proposition}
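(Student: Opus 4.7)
The plan is to reduce the identification to a statement about integer cohomology via \autoref{prop:homological-classification-of-integral-splittings}, which provides a canonical bijection between integral splittings on \(M\) and integral homological splittings, i.e., sections \(\sigma : H^1(M;\mathbb{R}) \to H^1(M, \partial M;\mathbb{R})\) mapping \(H^1(M;\mathbb{Z})\) into \(H^1(M, \partial M;\mathbb{Z})\). Applying this to \(M = X, X_1, X_2\), the problem reduces to producing a canonical bijection between integral homological splittings on \(X\) and pairs of such on \((X_1, X_2)\).

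Next, I set up the topological identifications using Mayer--Vietoris. For \(X = X_1 \cup_Y X_2\), since \(Y\) is a rational homology sphere one has \(H^1(Y;\mathbb{Z}) = 0\), giving a natural isomorphism \(H^1(X;\mathbb{Z}) \xrightarrow{\sim} H^1(X_1;\mathbb{Z}) \oplus H^1(X_2;\mathbb{Z})\) by restriction (the connecting map from \(H^0(Y;\mathbb{Z})\) vanishes because \(H^0(X_1;\mathbb{Z}) \oplus H^0(X_2;\mathbb{Z}) \to H^0(Y;\mathbb{Z})\) is surjective by connectedness). I would then obtain the analogous decomposition for \(H^1(X, \partial X;\mathbb{Z})\) by applying MV to the pair \((X, \partial X) = (X_1, Y_1) \cup_Y (X_2, Y_2)\), where \(Y_i = \partial X_i \setminus Y\), and relating \(H^1(X_i, Y_i;\mathbb{Z})\) to \(H^1(X_i, \partial X_i;\mathbb{Z})\) via the long exact sequence of the triple \((X_i, \partial X_i, Y_i)\). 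The key verification is that these isomorphisms intertwine the projection \(H^1(X, \partial X;\mathbb{Z}) \to H^1(X;\mathbb{Z})\) with the direct sum of the analogous projections for the \(X_i\).

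With the cohomology groups identified, an integral homological splitting on \(X\) decomposes canonically into a pair of such splittings on \(X_1, X_2\), and assembly in the reverse direction follows by inverting the MV isomorphisms; integrality is preserved because all MV and triple-LES maps are defined at the level of integer coefficients. Canonicity holds because Mayer--Vietoris depends only on the topological decomposition. The main obstacle I anticipate is handling the interior boundary component \(Y\): since \(Y\) contributes to \(\partial X_i\) but not to \(\partial X\), the groups \(H^0(\partial X_i;\mathbb{Z})/H^0(X_i;\mathbb{Z})\) carry contributions from \(Y\) that do not appear in \(H^0(\partial X;\mathbb{Z})/H^0(X;\mathbb{Z})\). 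Tracking how the compatibility condition at \(Y\) cuts out the image of the map from splittings on \(X\) to pairs on \((X_1, X_2)\) — exploiting the connectedness of \(Y\) and Poincar\'e--Lefschetz duality to match the extra boundary data across the two pieces — will be the crux of the argument.
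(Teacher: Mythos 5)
Your reduction to integral homological splittings via \autoref{prop:homological-classification-of-integral-splittings} is exactly the paper's first move, and so is the Mayer--Vietoris isomorphism \(H^1(X;\mathbb{Z}) \simeq H^1(X_1;\mathbb{Z}) \oplus H^1(X_2;\mathbb{Z})\). But the next step you propose — assembling an analogous decomposition of \(H^1(X,\partial X;\mathbb{Z})\) into a direct sum of \(H^1(X_i,\partial X_i;\mathbb{Z})\) out of the relative Mayer--Vietoris sequence and the triple \((X_i, \partial X_i, Y_i)\) — cannot succeed, because no such direct sum isomorphism exists. Chasing through those exact sequences (using \(H^1(Y;\mathbb{Z})=0\) and \(H^1(\partial X_i, Y_i;\mathbb{Z}) \simeq H^1(Y;\mathbb{Z}) = 0\)) gives \(H^1(X,\partial X;\mathbb{Z}) \simeq \mathbb{Z} \oplus H^1(X_1,Y_1;\mathbb{Z}) \oplus H^1(X_2,Y_2;\mathbb{Z})\), while \(H^1(X_1,\partial X_1;\mathbb{Z}) \oplus H^1(X_2,\partial X_2;\mathbb{Z}) \simeq \mathbb{Z}^2 \oplus H^1(X_1,Y_1;\mathbb{Z}) \oplus H^1(X_2,Y_2;\mathbb{Z})\): the glued group is a full rank smaller, because \(H^0(Y;\mathbb{Z})\) is counted once on the left and once for each \(X_i\) on the right. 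You flag this "extra boundary data at \(Y\)" as the crux, but the vague plan to "cut out the image" via connectedness of \(Y\) and duality is where the argument currently ends; as written it is a gap, not merely a technicality.

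The paper's proof avoids a direct-sum statement for \(H^1(X,\partial X;\mathbb{Z})\) entirely. It works at the level of harmonic \(1\)-forms with Dirichlet boundary conditions: it defines a concrete linear map \(R_{\mathcal{H}} : \mathcal{H}^1_D(X) \to \mathcal{H}^1_D(X_1) \times \mathcal{H}^1_D(X_2)\) by restricting a harmonic form to each \(X_i\) and then gauge-fixing (subtracting \(df_i\) with \(\Delta f_i = 0\), \(f_i|_Y\) prescribed, \(f_i|_{Y_i} = 0\)) so that the result lies back in \(\mathcal{H}^1_D(X_i)\). It then observes that the cohomology class \([a]\) restricts to \(([a_1],[a_2])\), so that the composite
\[
H^1(X_1;\mathbb{R}) \times H^1(X_2;\mathbb{R}) \xrightarrow{\iota_\ast} H^1(X;\mathbb{R}) \xrightarrow{s^H} \mathcal{H}^1_D(X) \xrightarrow{R_{\mathcal{H}}} \mathcal{H}^1_D(X_1) \times \mathcal{H}^1_D(X_2) \to H^1(X_1;\mathbb{R}) \times H^1(X_2;\mathbb{R})
\]
is the identity, and reads off \((s_1^H, s_2^H) := R_{\mathcal{H}} \circ s^H \circ \iota_\ast\); the reverse direction goes through \(R_{\mathcal{H}}^{-1}\) on the relevant image. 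The key difference is that the paper never claims \(R_{\mathcal{H}}\) is onto (it is not, by the rank count above) — it only needs the composite above to be a retraction. If you want to salvage your purely topological route, you would have to replace the sought direct-sum isomorphism with this weaker retraction statement, which is essentially what the Hodge-theoretic map packages.
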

\begin{proof}
	Choose an~integral splitting \(s\).
	Take any \(a \in \mathcal{H}^1_D(X)\).
	Denote by \(\tilde{a}_i\) its restriction to \(X_i\).
	For each \(i\) there is a~unique 
	\(f_i \in L^2_2(\Omega^0(X_i))\) 
	such that \(\Delta f_i = 0\),
	\(f_i|_Y = G_d \iota_Y^\ast(a)\),
	and \(f_i|_{Y_i} = 0\).
	The resulting
	\(a_i = \tilde{a}_i - df_i\)
	is in \(\mathcal{H}^1_D(X_i)\),
	thus we get a~map
	\begin{equation*}
		R_{\mathcal{H}} : \mathcal{H}^1_D(X) \to 
			\mathcal{H}^1_D(X_1) \times \mathcal{H}^1_D(X_2)
	\end{equation*}
	sending \(a\) to \( (a_1,a_2)\).
	Note that \(\mathcal{H}^1_D(X_i) \subset \Omega^1_{CC}(X_i)\)
	and therefore \(R_{\mathcal{H}}\)
	coincides with doing the gauge fixing of
	\autoref{lem:split-gauge-fixing-in-4d} 
	on both components, i.e.,
	\( a \mapsto (\proj_{CC}(a|_{X_1}), \proj_{CC}(a|_{X_2})) \).
	The cohomology class \([a] \in H^1(X; \mathbb{R})\)
	restricts to \( ([\tilde{a}_1],[\tilde{a}_2]) = ( [a_1], [a_2]) 
	\in H^1(X_1;\mathbb{R}) \times H^1(X_2;\mathbb{R})\).
	It follows that the composition
	\begin{align*}
		H^1(X_1;\mathbb{R}) 
		&\times H^1(X_2;\mathbb{R})
		\xrightarrow{\iota_\ast} H^1(X; \mathbb{R})
		\xrightarrow{s^H} \mathcal{H}^1_D(X) \to
		\\
		& \xrightarrow{R_{\mathcal{H}}} \mathcal{H}^1_D(X_1) \times \mathcal{H}^1_D(X_2)
		\to
		H^1(X_1;\mathbb{R}) \times H^1(X_2;\mathbb{R})
	\end{align*}
	is the identity, where \(\iota_\ast\)
	is the inverse of the restriction map
	\(H^1(X; \mathbb{R})\to
	H^1(X_1;\mathbb{R}) \times
	H^1(X_2;\mathbb{R})\)
	(invertible by Mayer\hyp{}Vietoris).
	We can thus choose \( (s_1^H,s_2^H) \)
	to be the composition \(R_{\mathcal{H}} \circ s^H \circ \iota_\ast\).
	Since \(s^H\) was integral, thus \(s_i^H\) are integral
	and by \autoref{prop:homological-classification-of-integral-splittings} 
	there exist unique integral splittings
	\(s_1, s_2\) inducing \(s_1^H, s_2^H\).

	On the other hand, given integral splittings \(s_1, s_2\)
	on \(X_1, X_2\), we can choose \(s^H\) to be 
	\( (R_{\mathcal{H}})^{-1} \circ (s_1^H,s_2^H) \circ (\iota_\ast)^{-1}\)
	and by \autoref{prop:homological-classification-of-integral-splittings}
	there is a~unique integral splitting \(s\) inducing \(s^H\).
\end{proof}

\autoref{prop:DSW-surjective} guarantees that whenever \(\partial X \neq \varnothing\), 
the moduli \(\SWModuli{s}{X}\) is a~smooth Hilbert manifold,
and the same follows for \(\SWModuli{s_i}{X_i}\).
We want to include the case when \(X\) is a~closed manifold,
when it is well\hyp{}known
that to achieve surjectivity one, in general, needs to 
perturb the metric on \(X\)
or perturb the Seiberg\hyp{}Witten equations.
Therefore, we \emph{assume} that for any \((A,\Phi) \in \SW^{-1}(0)\) 
the operator \(D\SW_{(A,\Phi)}\) is onto.

\begin{remark}
	The careful reader may notice that we do not assume any transversality
	of the maps \(R_{\tau_i,Y}\) 
	and thus the fiber product may not \textit{a~priori} be a~manifold.
	That it is a~manifold follows from the proof of the theorem.
	What is not proven here, but may be useful in other contexts,
	is that the transversality
	is indeed equivalent to \(D\SW_{(A,\Phi)}\) being surjective for all
	\((A,\Phi) \in \SWModuli{s}{X}\).
\end{remark}

\gluing
\begin{proof}
	By \autoref{cor:dependence-on-twistings}
	we can assume, without loss of generality,
	that \(s = s_{\mathbb{Z}}\) and \(s_i = s_{i,\mathbb{Z}}\),
	as well as \(\tau \equiv 1\) and \(\tau_i \equiv 1\).
	We thus drop \(\tau\)'s from the notation entirely.

	The plan is as follows. We will construct a~map
	\begin{equation*}
		F : \SWModuliFree{s}{X}
		\to \SWModuliFree[1]{s_1}{X_1} \times \SWModuliFree[2]{s_2}{X_2},
	\end{equation*}
	and a~homotopy
	\begin{equation*}
		H: \SWModuliFree{s}{X}
		\to \mleft( \CoulThree{Y_1} \mright) \times 
		\mleft( \CoulThree{Y_2} \mright) 
	\end{equation*}
	between \(R\) and 
	\( \mleft(R_{Y_1} \times R_{Y}\mright) \circ F\).
	Then we will prove 
	that \(F\) intertwines the actions of
	\(\GaugeSplit[s](X)\)
	and \(\GaugeSplit[s_1](X_1) \times \GaugeSplit[s_2](X_2)\),
	and that \(H\) is \(\GaugeSplit[s](X)\)\hyp{}invariant;
	thus, both \(F\) and \(H\) descend to
	\(\SWModuli{s}{X}\).
	Furthermore, we will prove \(F\) and \(H\)
	are continuous and smooth,
	and that \(F\) has image in the fiber product.
	Finally, we will show that \(F\)
	is a~smooth immersion onto
	\begin{math}
		\SWModuliFree[1]{s_1}{X_1} \times_{Y} \SWModuliFree[2]{s_2}{X_2}.
	\end{math}
	The \(S^1\)\hyp{}equivariance of \(F\) and \(H\) will be apparent
	from the construction.

	{\bf Step 1.} 
	We start by constructing \(F\),
	proving its smoothness and that its image lies in the fiber product.
	Take \((A,\Phi) \in \SWModuli{s}{X}\).
	We would like to simply restrict it to the components \(X_i\)
	and then put into split Coulomb slice.
	By \autoref{lem:split-gauge-fixing-in-4d} modulo \(S^1\)
	there a~unique way of doing that using a~contractible gauge
	transformation. Here we make a~different choice
	than in \autoref{lem:split-gauge-fixing-in-4d},
	requiring \(\int_{Y} f = 0\) instead of \(\int_X f = 0\).
	Precisely, for \(a \in \SForms{1}{X_i}\)
	choose \(\tilde{u}^s_a = e^{f_a}\)
	such that \(f_a \in L^2_2(i \Omega^0(X_i))\), \(\int_{Y} f_a = 0\)
	and
	\begin{math}
		a 
		- \left(\tilde{u}^s_a\right)^{-1} d \tilde{u}^s_a 
		\in \SForms[s]{1}{X_i}.
	\end{math}
	Define
	\begin{equation*}
		F(A,\Phi) 
		= \mleft( \tilde{u}^s_{(A-A_0)|_{X_1}} (A,\Phi)|_{X_1},
		\tilde{u}^s_{(A-A_0)|_{X_2}} (A,\Phi)|_{X_2} \mright).
	\end{equation*}
	Since 
	\(A-A_0\) was in double Coulomb slice,
	thus \((A-A_0)|_{X_1}\) (resp. \((A-A_0)|_{X_2}\)) is already coclosed
	on \(X_1\) (resp. \(X_2\)) and on \(Y_1\) (resp. \(Y_2\)),
	moreover \((A-A_0)|_{X_1}\) and \((A-A_0)|_{X_2}\) agree on \(Y\).
	Denoting the restriction to \(Y\) by \(b_{(A,\Phi)} =
	\iota^\ast_{Y}(A-A_0)\),
	\autoref{thm:restriction-is-smooth-for-solutions}
	together with \autoref{thm:trace}
	imply that \(b_{(A,\Phi)}\) is smooth and depends smoothly on
	\((A,\Phi) \in \SWModuliFree{s}{X}\) as an~element of \(L^2_{3/2}\).
	Notice that \(f_i = f_{(A-A_0)|_{X_i}}\) 
	can be decomposed as \(f_i = f_i^{\partial} + f_i^s\),
	where \(f_i^\partial\)
	are the unique solutions to
	\begin{align}
		\label{eqn:fi}
		f_i^\partial|_{Y_i} = 0, \quad f_i|_{Y} = g, &\quad \Delta f_i = 0,
	\end{align}
	where \(g = G_d \Pi_d b_{(A,\Phi)}\)
	depends smoothly on \((A,\Phi)\) as an~element of \(L^2_{5/2}\).
	Moreover, the map \(g \mapsto f_i^\partial\) is linear and continuous
	as a~map \(L^2_{s+1/2} \to L^2_{s+1}\) for \(s \geq 0\),
	so \(f_i^\partial\) depend smoothly on \((A,\Phi)\) as~elements of \(L^2_3\).
	Furthermore, since 
	\(a|_{X_i} - df_i^\partial \in L^2_1(i\Omega^1_{CC}(X_i))\),
	\(f_i^s\) are the unique elements of \(\mathcal{H}^1_D(X_i)\)
	such that \(a|_{X_i} - df_i^\partial - d f_i^s \in L^2_1(i \Omega^1_s(X_i))\)
	and \(f_i|_Y = 0\).
	By \autoref{rmk:continuous-gauge-fixing-within-double-Coulomb-slice},
	\(f_i^s\) depend continuously on \(a|_{X_i} - d f_i^\partial \in L^2_1\).
	Which proves that \(f_i \in L^2_3\) depend continuously
	and linearly on \(g \in L^2_{5/2}\),
	thus depend smoothly on \(g\), so they depend smoothly on \( (A,\Phi)\).

	This establishes the well\hyp{}definedness and smoothness of \(F\).
	That its image lies in the fiber product follows directly from the construction.

	{\bf Step 2.} We proceed to constructing \(H\).
	By \eqref{eqn:fi}, the functions
	\(f_i\) are locally constant on \(Y_i\).
	We also have
	\begin{equation*}
		R_{Y_i}(F(A,\Phi)) = e^{f_i|_{Y_i}} \cdot
		R_{Y_i}(A,\Phi)
	\end{equation*}
	by the construction of \(F\). Thus we can define
	\begin{equation*}
		H(A,\Phi,t) = \mleft( e^{t f_1|_{Y_1}}, e^{t f_2|_{Y_2}} \mright) 
		R(A,\Phi)
	\end{equation*}
	which at \(t=0\) coincides with 
	\(R(A,\Phi)\)
	and at \(t=1\) coincides with 
	\( (R_{Y_1},R_{Y_2}) \circ F(A,\Phi)\).

	{\bf Step 3.} We now investigate the equivariance of \(F\)
	under the actions of gauge groups.
	Let \(u \in \GaugeSplit[s](X)\).
	From \autoref{lem:split-gauge-fixing-in-4d} if follows that
	there is exactly one contractible 
	\(\tilde{u}_i  = e^{\tilde{f}_i}\in \GaugeId(X_i)\)
	which puts \(-u^{-1}du\) into \(\SForms[s]{1}{X_i}\)
	with \(\int_{Y} \tilde{f}_i = 0\).
	Equivalently, this is the unique \(\tilde{u}_i = e^{\tilde{f}_i} \in \GaugeId(X_i)\)
	with \(\int_{Y} \tilde{f}_i = 0\)
	such that \(\tilde{u}_i u|_{X_i} \in \GaugeSplit[s_i](X_i)\).
	Define \(u_i = \tilde{u}_i u|_{X_i}\).
	Since \(\tilde{u}_i\) is contractible,
	thus \([\iota_{X_i}^\ast(u^{-1}du)] = [u_i^{-1}du_i]\)
	in \(H^1(X_i; 2 \pi i \mathbb{Z})\).
	Therefore the map
	\(u \mapsto (u_1,u_2)\)
	provides the~canonical isomorphism
	\begin{equation}
		\label{eqn:gauge-group-isomorphism}
		\GaugeSplit[s](X) \simeq \GaugeSplit[s_1](X_1) \times
		\GaugeSplit[s_2](X_2)
	\end{equation}
	which agrees with the isomorphism coming from the Mayer\hyp{}Vietoris
	sequence \(H^1(X; \mathbb{Z})
	\simeq H^1(X_1;\mathbb{Z}) \times H^1(X_2;\mathbb{Z})\).
	From the construction of \(F(A,\Phi)\)
	it follows that \((u|_{X_1}^{-1},u|_{X_2}^{-1}) F(u(A,\Phi))\)
	differs from \(F(A,\Phi)\)
	exactly by the factor of \( (e^{\tilde{f}_1},e^{\tilde{f}_2}) \).
	Thus \( (u_1^{-1},u_2^{-1}) F(u(A,\Phi)) = F(A,\Phi)\).
	This proves that \(F\) commutes with the gauge group action
	as identified in \eqref{eqn:gauge-group-isomorphism}.

	{\bf Step 4.} We prove the invariance of \(H\) under \(\GaugeSplit[s](X)\).
	By the construction of \(H\), and since \(u|_{\partial X} = 1\)
	for \(u \in \GaugeSplit[s](X)\),
	we have
	\(H( u(A,\Phi), t) = (e^{t\tilde{f}_1|_{Y_1}},e^{t\tilde{f}_2|_{Y_2}}) H(A,\Phi,t)\)
	where \(\tilde{f}_i\) are as in the previous paragraph.
	Since \(R_{\mathcal{H}}(\mathcal{H}^1_D(X)) \in (\im s_1^H) \times(\im
	s_2^H)\) we get that \(R_{\mathcal{H}}(u^{-1}du)\) 
	is already in the split Coulomb slice
	on \(X_1\) and \(X_2\).
	Moreover, \(a_i = \iota^\ast_{X_i}(u^{-1} du)\)
	is coclosed on \(X_i\) and \(Y_i\).
	Since there are functions \(\hat{f}_i\) satisfying
	\begin{align*}
		\Delta \hat{f}_1 = 0, \quad \hat{f}_1|_{Y_1} = 0, & \quad \hat{f}_1|_Y = G_d \iota^\ast_Y a, \\
		\Delta \hat{f}_2 = 0, \quad \hat{f}_2|_{Y_2} = 0, & \quad
		\hat{f}_2|_{Y} = G_d \iota^\ast_Y a,
	\end{align*}
	the uniqueness of \(\tilde{f}_i\) implies \(\tilde{f}_i = \hat{f}_i\)
	and therefore \(\tilde{f}_i|_{Y_i} = 0\).
	Thus \(H(u(A,\Phi),t) = H(A,\Phi,t)\), as wished.

	{\bf Step 5.} We show that \(F\) is bijective onto the fiber product, 
	following the argument in \cite{Lip2008}.
	Let \( (A_i, \Phi_i) \in \SWModuliFree{s_i}{X_i}\)
	such that \( R_{Y}(A_1,\Phi_1) = R_{Y}(A_2,\Phi_2)\).
	These would give a~configuration on \(X\) 
	if the normal components of connections \(A_1\) and \(A_2\) agreed on
	\(Y\).
	Let \(h_1\,dt\) and \(h_2\,dt\) be the \(dt\)\hyp{}components of
	\( (A_1-A_0)|_{Y}\) and \( (A_2-A_0)|_{Y}\).
	We want to find harmonic functions \(f_i \in L^2_2(X_i;i \mathbb{R})\)
	such that
	\[ f_1|_{Y} = f_2|_{Y}, \]
	\[ \partial_t f_1|_{Y} + h_1 = \partial_t f_2|_{Y} + h_2, \]
	\[ f_1|_{Y_1} = 0, f_2|_{Y_2} = 0.\]
	Take a~tubular neighborhood \( [-\varepsilon,\varepsilon] \times Y
	\subset X\) of \(Y\).
	Let \(\{\phi_\lambda\}_{\lambda}\) be an eigenbasis for \(\Delta_{Y}\)
	and write \(h_2-h_1 = \sum_\lambda c_\lambda \phi_\lambda\).
	Since \(h_2 - h_1 \in L^2_{1/2}\), therefore
	\( \sum_\lambda \lambda^{1/2} |c_\lambda|^2 < \infty\)
	and thus the following are well\hyp{}defined as elements of
	\( L^2_2([-\varepsilon,\varepsilon] \times Y; i \mathbb{R})\):
	\begin{align*}
		g_1 &= \frac 1 2 
		\sum c_\lambda \lambda^{-1/2} e^{\lambda^{1/2} t} \varphi_\lambda,
		\\
		g_2 &= \frac 1 2
		\sum c_\lambda \lambda^{-1/2} e^{-\lambda^{1/2} t} \varphi_\lambda,
	\end{align*}
	which satisfies
	\(\partial_t g_1|_{Y} - \partial_t g_2|_{Y} = h_1-h_0\).
	Let \(\rho \in C^\infty(X;\mathbb{R})\) be a~bump function supported in
	\([-\varepsilon,\varepsilon] \times Y\) which is identically \(1\)
	in a~neighborhood of \(Y\).
	The configurations
	\(e^{\rho g_1}(A_1,\Phi_1)\) and \(e^{\rho g_2} (A_2,\Phi_2)\)
	patch to give a~\(L^2_1\) configuration \( (A',\Phi')\), but this is not necessarily
	in the Coulomb slice because \(\rho f_i'\) are not necessarily harmonic.
	Take \(f \in L^2_2(X;i \mathbb{R})\) such that
	\[f|_{Y_1} = 0, \quad f|_{Y_2} = 0,  \quad
	\Delta f = - \Delta(\rho g_1) - \Delta(\rho g_2).\]
	Denote
	\( (A'',\Phi'') = e^{f} (A',\Phi') \in \SWModuliFree{CC}{X} \).
	Finally, by
	\autoref{rmk:continuous-gauge-fixing-within-double-Coulomb-slice}
	we can continuously deform \((A'',\Phi'')\) to a~configuration \((A,\Phi)\)
	such that \(F(A,\Phi) = ( (A_1,\Phi_1), (A_2,\Phi_2) )\).

	{\bf Step 6.} 
	We need to prove that \(F^{-1}\) constructed in the previous step is continuous.
	Notice \( (g_1,g_2) \) as elements of \(L^2_2\)
	depend continuously on \(h_1-h_0 \in L^2_{1/2}\)
	which in turn depends continuously on \(A_1-A_0\) and \(A_2-A_0 \in L^2_{1/2}\).
	Moreover, \(f \in L^2_3\) depends continuously on
	\( (g_1,g_2) \in L^2_2 \).
	If the multiplication \(L^2_2 \times L^2_1 \to L^2_1\) was continuous
	on \(4\)\hyp{}manifolds then we would have shown that the map
	\( ((A_1,\Phi_1),(A_2,\Phi_2)) \to (A,\Phi)\) which we constructed 
	is continuous.
	Since it is not true, we need to show that \( e^{g_i} \Phi_i \in L^2_1\) 
	depends continuously on the initial configurations.
	
	We will prove it depends continuously on 
	\( h_2-h_1 \in L^2_{1/2}\) and \(\Phi_i \in L^2_1\).
	Let \( (A_1',\Phi_1')\) and \( (A_2',\Phi_2') \)
	be another choice of configurations,
	and denote the corresponding harmonic functions on
	\([-\varepsilon,0] \times Y\) and \( [0,\varepsilon] \times Y\)
	by \(g'_1\), \(g'_2\).
	Then
	\begin{align*}
		\| e^{g_1} \Phi_1 - e^{g'_1} \Phi_1' \|_{L^2_1}
		& \leq \| e^{g'_1} (\Phi_1 - \Phi_1') \|_{L^2_1}
		+ \| (e^{g_1} - e^{g'_1}) \Phi_1 \|_{L^2_1}
		\\ & 
		\leq C(\| e^{g'_1} \|_{L^\infty} + \| e^{g'_1} \|_{L^2_2})
		\| \Phi_1 - \Phi_1'\|_{L^2_1}
		\\ &
		+ C  \|e^{g_1}-e^{g'_1}\|_{L^2_2} 
		\| \Phi_1\|_{L^2_1} 
		\\ &
		+ C \|e^{g_1}-e^{g'_1}\|_{L^\infty([-\varepsilon,-\delta] \times Y)}
		\| \Phi_1\|_{L^2_1([-\varepsilon,-\delta] \times Y)}
		\\ & 
		+ C \|e^{g_1}-e^{g'_1}\|_{L^\infty([-\delta,0] \times Y)}
		\| \Phi_1\|_{L^2_1([-\delta,0] \times Y)}
	\end{align*}
	for any \(\delta\).
	One can choose \(\delta\) to have
	\( \|\Phi_1\|_{L^2_1([-\delta,0] \times Y}\) as small as one wants
	while \( \|e^{g_1}-e^{g'_1}\|_{L^\infty([-\delta,0] \times Y)} \leq 2\).
	Moreover, we have
	\begin{equation*}
		\|e^{g_1}-e^{g'_1}\|_{L^\infty([-\varepsilon,-\delta] \times Y)}
		\leq \|g_1 - g'_1\|_{L^\infty([-\varepsilon,-\delta] \times Y)}
		\leq C \| (h_1 - h_2) - (h_1' - h_2')\|_{L^2_{1/2}}
	\end{equation*}
	via a~direct computation (or by interior regularity estimates
	following from \autoref{thm:garding}).
	This finishes the proof that the inverse map is continuous.

	{\bf Step 7.} 
	Finally, we prove that the differential of \(F\) is invertible.
	Assume this is not the case,
	so that there exists \( (A,\Phi) \in \SWModuliFree{s}{X}\)
	such that for any \(\varepsilon>0\) there is
	\( (A',\Phi') \in \SWModuliFree{s}{X}\)
	such that \(0 < D = \| F (A,\Phi) 
	- F (A',\Phi') \|_{L^2_1} < 1\)
	and
	\( \| (A-A',\Phi-\Phi') \|_{L^2_1} \leq D \varepsilon\).
	We get that \( \|g-g'\|_{L^2_{5/2}} \leq C_{5/2} D \varepsilon\)
	and thus
	\begin{align*}
		\| f_i - f_i'\|_{L^2_3}
		&\leq C \| g - g'\|_{L^2_{5/2}}
		\\& \leq C C_{5/2} D \varepsilon.
	\end{align*}
	From this and \autoref{thm:multiplication} it follows that
	\( D = \| F(A,\Phi) - F(A',\Phi') \|_{L^2_1}
	\leq C'' \| (A-A',\Phi-\Phi') \|_{L^2_1} \leq C''D \varepsilon\)
	for some \(C''\) depending on \(\|\Phi\|_{L^2_1}\).
	Choosing \(\varepsilon = \frac{1}{1+C''}\)
	gives the desired contradiction.
\end{proof}

\appendix
\section{Appendix}

This section presents some standard analytical results
which are used repeatedly in the article.
We recall the G{\aa}rding inequality,
Sobolev multiplication and trace theorems
and the Implicit Function Theorem.


\begin{theorem}
	[Sobolev multiplication theorem]
	\label{thm:multiplication}
	Let \(M\) be a~manifold with compact boundary
	and cylindrical ends.

	Assume \(k, l \geq m\) and \( 1/p + 1/q \geq 1/r\)
	for \(p,q,r \in (1,\infty) \).
	Then the multiplication
	\[
		L_k^p(M) \times L_l^q(M) \to L_m^r(M)
	\]
	is continuous if any of these hold:
	\begin{itemize}
		\item \( (k-n/p) + (l-n/q) \geq m - n/r \)
			and both \( k - n/p < 0, l - n/q < 0\),
		\item \( \min(k-n/p, l-n/q) \geq m - n/r \)
			and either \( k - n/p > 0 \) or \( l - n/q > 0\),
		\item \( \min(k-n/p, l-n/q) > m - n/r\)
			and either \( k - n/p = 0 \) or \( l - n/q = 0\).
	\end{itemize}
	What is more, whenever it is continuous, it restricts to a~compact map
	on \( \{f\} \times L_l^q(M) \to L_m^r(M) \)
	provided that \( l > m \) and \( l - n/q > m - n/r \).
\end{theorem}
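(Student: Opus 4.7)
The plan is the standard three-step reduction: (i) use the Leibniz rule to reduce to the case $m = 0$, (ii) apply Sobolev embedding to each factor to land in a pair of Lebesgue spaces, and (iii) conclude by H\"older's inequality. For (i), $\partial^\alpha(fg)$ is a finite linear combination of products $\partial^\beta f \cdot \partial^{\alpha - \beta} g$ with $|\alpha| \leq m$; each such product pairs an element of $L^{p}_{k-|\beta|}$ with one of $L^{q}_{l-(m-|\beta|)}$, and the effective indices $k' - n/p = (k - n/p) - |\beta|$ and $l' - n/q = (l - n/q) - (m - |\beta|)$ preserve the balance hypothesis of the theorem, so it suffices to bound $\|uv\|_{L^r}$ in each of the three regimes with $m=0$.

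\textbf{The three regimes.} In the subcritical regime, the Sobolev embedding yields $L^p_k \hookrightarrow L^{p^*}$ and $L^q_l \hookrightarrow L^{q^*}$ with $1/p^* = 1/p - k/n$ and $1/q^* = 1/q - l/n$; a direct rearrangement shows the hypothesis $(k - n/p) + (l - n/q) \geq -n/r$ is equivalent to $1/p^* + 1/q^* \leq 1/r$, so H\"older delivers $L^{p^*} \cdot L^{q^*} \hookrightarrow L^r$ (using additionally the condition $1/p + 1/q \geq 1/r$ to absorb the slack via the embedding $L^{p^*} \hookrightarrow L^{r'}$ where needed). In the supercritical regime, say $k - n/p > 0$, we have $L^p_k \hookrightarrow L^\infty$, while the hypothesis $l - n/q \geq -n/r$ gives $L^q_l \hookrightarrow L^r$ directly, and the product is bounded by $\|u\|_{L^\infty}\|v\|_{L^r}$. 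In the critical regime $k - n/p = 0$, the embedding $L^p_k \hookrightarrow L^s$ holds for every finite $s$, and the \emph{strict} inequality $l - n/q > -n/r$ leaves exactly the room to choose $s$ so that $1/s + 1/q^* = 1/r$; this is why strictness is needed precisely here.

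\textbf{Uniformity and compactness.} Because $M$ has compact boundary and cylindrical ends it is of bounded geometry, so all Sobolev embeddings and H\"older estimates hold with uniform constants on a uniformly locally finite cover by charts, and the local estimates patch to a global continuous multiplication. For the compactness statement, the strict inequalities $l > m$ and $l - n/q > m - n/r$ let us factor the multiplication through an intermediate space $L^{q'}_{l'}$ with $m < l' < l$ and $l' - n/q' > m - n/r$, into which $L^q_l$ embeds compactly by Rellich--Kondrachov \emph{on relatively compact sets}. The main obstacle is compactness at the cylindrical ends: to handle it, fix $f \in L^p_k$, approximate it by compactly supported $f_\varepsilon$ in $L^p_k$-norm, apply the (non-strict) continuity of multiplication to control $\|(f - f_\varepsilon) v\|_{L^r_m}$ uniformly by $\varepsilon \|v\|_{L^q_l}$, and use local Rellich--Kondrachov on $\operatorname{supp} f_\varepsilon$ to extract a convergent subsequence of $\{f_\varepsilon v_n\}$ from any bounded sequence $\{v_n\} \subset L^q_l$. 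The continuity is then essentially algebraic given the embeddings, and this diagonal argument for compactness at infinity is the only genuinely nontrivial point.
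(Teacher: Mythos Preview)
Your argument is essentially correct and more self-contained than the paper's. The paper does not unpack the continuity proof at all: it reduces to the boundaryless case by doubling, then simply cites \cite{Pal1968}*{Theorem~9.6} for compact pieces and \cite{KMbook}*{Theorem~13.2.2} for the cylindrical ends. Your Leibniz/embedding/H\"older route is the standard proof underlying those references, so you are reproving what the paper outsources. For compactness the two approaches are close in spirit: the paper approximates $f$ by \emph{smooth} $f_j$ on a compact manifold, observes that multiplication by a smooth function is bounded $L^q_l \to L^q_l$, invokes the compact embedding $L^q_l \hookrightarrow L^r_m$ (this is where $l>m$ and $l-n/q>m-n/r$ enter), and runs a diagonal extraction; for the cylinder it again cites \cite{KMbook}. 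Your version approximates $f$ by compactly supported functions and handles the noncompactness directly in one stroke, which is arguably cleaner.

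Two small points to tighten. First, in the Leibniz reduction the pair $(k'-n/p,\,l'-n/q)$ can migrate between the three regimes (e.g.\ a supercritical factor can become critical or subcritical after differentiation), so ``the balance hypothesis is preserved'' needs a short case check rather than a one-line assertion; it does go through, since in every regime one has both $k-n/p \geq m-n/r$ and $l-n/q \geq m-n/r$ (or the sum condition in regime~1), and these survive the shift by $|\beta|$ and $m-|\beta|$ respectively. Second, for the compactness step you should take $f_\varepsilon$ \emph{smooth} as well as compactly supported: merely compactly supported $f_\varepsilon \in L^p_k$ gives $f_\varepsilon v_n \in L^r_m$ but not in any higher space, so Rellich--Kondrachov has nothing to bite on. With $f_\varepsilon$ smooth, $f_\varepsilon v_n$ is bounded in $L^q_l$ with fixed compact support, and the compact embedding $L^q_l \hookrightarrow L^r_m$ on that support does the job---this is exactly the mechanism the paper uses.
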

\begin{proof}
	We can reduce to the case of a~manifold without boundary
	by considering a~double of \(M\), 
	or by attaching cylindrical ends along \(\partial M\).

	To obtain continuity,
	combine \cite{Pal1968}*{Theorem 9.6}
	for compact manifolds together with
	\cite{KMbook}*{Theorem 13.2.2} for an~infinite cylinder.

	For the compactness statement on a~compact manifold,
	for each sequence \(g_i \xrightarrow{L^q_l(M)} g\)
	take any sequence \(f_j \in C^\infty(M)\)
	such that \(\|f - f_j\|_{L^p_k(M)} \leq \frac{1}{2^j}\).
	By continuity of multiplication
	there is a~constant \(C\) such that
	\begin{equation*}
		\| (f-f_j) g_i\|_{L^r_m(M)}
		\leq C \|f-f_j\|_{L^p_k(M)} \leq \frac{C}{2^j}.
	\end{equation*}
	Set \(a_{0,i} = i\) and define inductively \(a_{j,i}\)
	in the following manner.
	Take \(a_{j,i} = a_{j-1,i}\) whenever \(i < j\).
	Since \(f_j \in C^\infty(M)\), the sequence
	\(f_j g_i\) is bounded in \(L^q_l(M)\)
	and by Sobolev embedding is precompact in \(L^r_m(M)\).
	Thus, we can choose \( (a_{j,i})_i \)
	to be a~subsequence of \( (a_{j-1,i})_i \)
	satisfying
	\begin{equation*}
		\|f_j g_{a_{j,j}} - f_j g_{a_{j,i}}\|_{L^r_m(M)}
		\leq \frac{1}{2^j}
	\end{equation*}
	for any \(i \geq j\).
	We finally get, for \(i \geq 0\),
	\begin{align*}
		\| f g_{a_{j,j} - f g_{a_{j+i,j+i}}} \|_{L^r_m(M)}
		& \leq
		\| (f-f_{j}) (g_{a_{j,j} - g_{a_{j+i,j+i}}}) \|_{L^r_m(M)}
	  \\
	  & 
		+ \| f_{j} (g_{a_{j,j} - g_{a_{j+i,j+i}}}) \|_{L^r_m(M)}
		\\
		& \leq
		\frac{2 C}{2^j}
		+ \frac{1}{2^j}
	\end{align*}
	making \(f g_{a_{j,j}}\) a~Cauchy sequence in \(L^r_m(M)\), 
	as wished.
	
	Compactness for a~cylinder 
	is proved in \cite{KMbook}*{Theorem 13.2.2}.
\end{proof}
%

\begin{theorem}
	[Sobolev trace theorem]
	\label{thm:trace}
	Suppose \( N \subset M\) is a~closed compact 
	\(n-j\)\hyp{}dimensional smooth submanifold,
	\( 1 \leq p < \infty \), and \( k, l \geq 0\)
	with \( k - j/p \geq l > 0 \).
	Then the restriction map extends to continuous
	\[L^p_k(M) \to L^p_l(N). \]
\end{theorem}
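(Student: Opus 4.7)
The plan is to reduce the global statement to a flat Euclidean model via coordinates, then obtain the model case by iterating the classical codimension-one trace theorem, and finally extend from the dense subspace $C^\infty_c$ to all of $L^p_k(M)$.

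First I would choose a finite cover of $N$ by coordinate charts of $M$ adapted to a tubular neighborhood of $N$, in which a piece of $N$ corresponds to $\mathbb{R}^{n-j} \times \{0\} \subset \mathbb{R}^{n-j} \times \mathbb{R}^j$. Using a subordinate partition of unity and the equivalence of the manifold Sobolev norms with their Euclidean counterparts on each chart (which holds because $N$ is compact and sits in the compact-boundary-with-cylindrical-ends part where only finitely many charts are needed), the problem reduces to the local statement: for every $u \in C^\infty_c(\mathbb{R}^n)$,
\[
	\|u(\cdot,0)\|_{L^p_l(\mathbb{R}^{n-j})} \leq C\,\|u\|_{L^p_k(\mathbb{R}^n)},
\]
whenever $k - j/p \geq l > 0$, followed by extension by density.

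Next I would iterate the standard codimension-one trace theorem on Euclidean space, namely that for $s > 1/p$ and $u \in C^\infty_c(\mathbb{R}^m)$ one has $\|u|_{\mathbb{R}^{m-1}\times\{0\}}\|_{L^p_{s-1/p}} \leq C_s \|u\|_{L^p_s}$. Applied $j$ times to successive transverse coordinates, this yields a bounded restriction $L^p_k(\mathbb{R}^n) \to L^p_{k-j/p}(\mathbb{R}^{n-j})$; the hypothesis $k - j/p \geq l > 0$ guarantees that each intermediate regularity strictly exceeds $1/p$, so every iteration is legal. Composing with the continuous Sobolev embedding $L^p_{k-j/p}(\mathbb{R}^{n-j}) \hookrightarrow L^p_l(\mathbb{R}^{n-j})$ (valid precisely because $k - j/p \geq l$) completes the flat estimate.

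The main obstacle is the codimension-one trace inequality itself. For $p=2$ it follows quickly from Plancherel by computing the decay in $|\xi'|$ of $\int_{\mathbb{R}}(1 + |\xi'|^2 + \xi_j^2)^{-k}\,d\xi_j$ and applying Minkowski's inequality on the Fourier side. For general $p \in (1,\infty)$ the cleanest route goes through the Besov or Triebel--Lizorkin characterization of fractional Sobolev spaces; since this is entirely classical and orthogonal to the gauge-theoretic content of the paper, I would simply invoke a standard reference such as Adams--Fournier or Triebel rather than reprove it.
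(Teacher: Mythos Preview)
Your sketch is correct and is essentially the standard argument one finds in the references. The paper, however, does not prove this statement at all: its entire ``proof'' is a one-line citation to Palais, \emph{Foundations of Global Non-Linear Analysis}, Theorem~9.3. So there is no real comparison of approaches to make; you have simply written out what the cited reference does, namely localize to adapted charts, iterate the codimension-one Euclidean trace estimate $j$ times (legitimate since $k-j/p \geq l > 0$ forces every intermediate regularity to exceed $1/p$), and extend by density. Your acknowledgement that the $p\neq 2$ codimension-one step is most cleanly handled via Besov/Triebel--Lizorkin machinery, and your decision to defer to a standard text for that, matches exactly the spirit of the paper's own treatment.
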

\begin{proof}
	See \cite{Pal1968}*{Theorem 9.3}. 
\end{proof}

We will make use of the Implicit Function Theorem
in the following form (cf. \cite{Lan1993}).
\begin{iftheorem}
	[Implicit Function Theorem]
	\label{thm:IFT}
	Suppose \(A, B\) are Hilbert spaces
	and \(\mathcal{F} : A \to B\) is a~smooth map
	such that the derivative \(D_p \mathcal{F}\) at \(p\) 
	is surjective
	and that its kernel splits with \(C\) as a~complementary subspace,
	\(A = \ker\mleft( D_p \mathcal{F} \mright) \oplus C\).
	Let \(\proj : A \to \ker\mleft( D_p \mathcal{F} \mright) \) 
	denote the projection onto the kernel along \(C\).
	Then there are open neighborhoods 
	\(U \subset A\) of \(p\),
	\(V \subset B\) of \(\mathcal{F}(p)\) 
	and \(W \subset \ker\mleft( D_p \mathcal{F} \mright) \) of \(0\) 
	and a~smooth diffeomorphism
	\(\mathcal{G} : V \times W \to U\)
	such that
	\[V \times W \xrightarrow{\mathcal{G}} U \xrightarrow{(\mathcal{F},\proj)} 
	B \oplus \ker \mleft( D_p \mathcal{F} \mright) \]
	is the identity map
	(i.e., \(\mathcal{G} = \mathcal{F}^{-1}\)).

	In particular, the projection
	\begin{equation*}
		\proj : \mathcal{F}^{-1}(0) \cap U \to
		\ker\mleft( D_p \mathcal{F} \mright) 
	\end{equation*}
	along \(C\) is a~local diffeomorphism.
\end{iftheorem}

We will also utilize the G{\aa}rding inequality.
\begin{theorem}
	[G{\aa}rding inequality]
	\label{thm:garding}
	Let \(D\) be a~first\hyp{}order elliptic operator with smooth coefficients on
	a~compact manifold \(M\) (possibly with boundary)
	and \(M' \subset M\) be open with compact closure.
	Then there is a~constant \(C\) such that for any \(\gamma \in
	L^p_{k+1}(M)\) we have
	\[ \lVert \gamma\rVert_{L^p_{k+1}(M')} \leq C ( \lVert  D \gamma\rVert_{L^p_k(M)} +
	\lVert \gamma\rVert_{L^p_k(M)}).\]
\end{theorem}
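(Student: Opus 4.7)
The plan is to establish this standard interior elliptic estimate by localizing to coordinate patches and reducing to the Euclidean case. First, I would cover the compact closure of \(M'\) by finitely many coordinate charts \(U_1, \ldots, U_N\), with each \(U_j\) contained in an open chart of \(M\) whose closure is still inside \(M\). Pick smooth cutoff functions \(\chi_j\) supported in \(U_j\) such that \(\sum_j \chi_j \equiv 1\) on a neighborhood of \(\overline{M'}\); if \(\overline{M'}\) meets \(\partial M\) one arranges the corresponding boundary charts to be half-space charts respecting \(\partial M\). Since \(\|\gamma\|_{L^p_{k+1}(M')} \leq \sum_j \|\chi_j \gamma\|_{L^p_{k+1}(U_j)}\), it suffices to bound each \(\chi_j \gamma\) in its chart.

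In each chart one reduces to the local elliptic estimate in \(\mathbb{R}^n\) (or \(\mathbb{R}^n_+\) in the boundary case): for any compactly supported \(u\), \(\|u\|_{L^p_{k+1}} \leq C\bigl(\|Du\|_{L^p_k} + \|u\|_{L^p_k}\bigr)\). The standard derivation uses a pseudodifferential parametrix \(Q\) of order \(-1\), constructed from invertibility of the principal symbol \(\sigma_D(x,\xi)\) for \(\xi \neq 0\), satisfying \(QD = I - S\) with \(S\) smoothing. Then the identity \(u = Q(Du) + Su\) combined with the mapping \(Q : L^p_k \to L^p_{k+1}\) and the smoothing property of \(S\) yields the desired bound.

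To pass from local back to global, apply the usual commutator trick: since \(D\) is first order and \(\chi_j\) smooth, \([D, \chi_j]\) is essentially multiplication by \(\sigma_D(d\chi_j)\), a bounded smooth tensor, so \(\|[D,\chi_j]\gamma\|_{L^p_k(M)} \leq C\|\gamma\|_{L^p_k(M)}\). Therefore \(\|D(\chi_j\gamma)\|_{L^p_k} \leq \|D\gamma\|_{L^p_k(M)} + C\|\gamma\|_{L^p_k(M)}\), and applying the local estimate to \(u = \chi_j\gamma\) and summing over \(j\) delivers the claimed inequality on \(M'\).

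The main nontrivial ingredient is the \(L^p\)-boundedness of the order-\(0\) components of the parametrix for general \(p \in (1, \infty)\), which rests on Calder\'on--Zygmund theory (equivalently the Mikhlin--H\"ormander multiplier theorem). For \(p = 2\) this can be short-circuited via Plancherel, giving a clean Fourier argument directly. The boundary-chart case further requires a version of the Agmon--Douglis--Nirenberg boundary estimates, adding more technical overhead; it is precisely this packaging of classical elliptic PDE machinery that the paper imports by citation rather than reproducing.
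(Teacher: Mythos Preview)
Your localize-and-parametrix outline is a valid derivation of this interior estimate, but it is not the route the paper takes. The paper's proof is a two-line reduction: attach a cylindrical end along \(\partial M\) to obtain a complete manifold \(M^\ast = M \cup (\partial M \times [0,\infty))\) of bounded geometry, extend \(D\) over \(M^\ast\), and invoke the G{\aa}rding inequality on such manifolds from Shubin \cite{Shu1992}*{Appendix 1, Lemma 1.4}. One then chooses a smooth cutoff \(\rho\) with \(\rho|_{M'} = 1\) and \(\rho|_{M^\ast \setminus M} = 0\), applies Shubin's estimate to \(\rho\gamma\), and absorbs the commutator \([D,\rho]\gamma\) (zeroth order) into the \(L^p_k\) term. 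This is more economical than constructing parametrices and appealing to Calder\'on--Zygmund, at the cost of importing a result for noncompact manifolds. Your approach, by contrast, is entirely local and self-contained, which has its own value.

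One point to correct: your invocation of Agmon--Douglis--Nirenberg for boundary charts is misplaced. ADN estimates are for elliptic \emph{boundary value problems} and necessarily involve a boundary operator; the inequality here carries no boundary condition at all. In fact the paper's argument forces \(\overline{M'}\) to be disjoint from \(\partial M\) (otherwise no smooth \(\rho\) can satisfy both \(\rho|_{M'}=1\) and \(\rho|_{\partial M}=0\)), and this is the only case used in the paper. If you did want to handle charts meeting \(\partial M\), the right device is a bounded Sobolev extension operator from the half-space to \(\mathbb{R}^n\) followed by the full-space estimate, not ADN.
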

\begin{proof}
	This follows from \cite{Shu1992}*{Appendix 1, Lemma 1.4}
	by extending \(D\) to the cylindrical\hyp{}end manifold 
	\(M^\ast = M \cup (\partial M) \times [0,\infty)\).
	Indeed, taking a~smooth bump function \(\rho\)
	such that \(\rho|_{M'} = 1\) and \(\rho|_{M^\ast \setminus M} = 0\)
	we get
	\begin{align*}
		\|\gamma \|_{L^p_{k+1}(M')} 
		& \leq
		\| \rho \gamma \|_{L^p_{k+1}(M^\ast)}
	 \\ &\leq
		C ( \| D(\rho \gamma) \|_{L^p_k(M^\ast)}
		+ \| \rho \gamma\|_{L^p_k(M^\ast)})
	 \\ &\leq
		C C'_\gamma ( \| D(\gamma) \|_{L^p_k(M)}
		+ \| \gamma\|_{L^p_k(M)})
	\end{align*}
	where the middle inequality follows from
	\cites{Shu1992}.
\end{proof}
\begin{remark}
	The author's understanding is that without the use of twistings
	there is (in general) no choice of
	\(s, s_1, s_2\) making \(F\) commute with the restriction
	maps on the nose.
	This problem does not show up in the construction
	of monopole Floer homology since there one quotients
	the moduli and configuration spaces by \(S^1\) after blowing up.
\end{remark}




\begin{bibdiv}
\addcontentsline{toc}{section}{References}
	\begin{biblist}
		\bibselect{bib}
	\end{biblist}
\end{bibdiv}

\end{document}